\newtheorem{thm}{Theorem}[section]
\newtheorem{prop}[thm]{Proposition}
\newtheorem{lem}[thm]{Lemma}
\theoremstyle{definition}
\newtheorem{ex}[thm]{Example}
\newtheorem{defn}[thm]{Definition}
\theoremstyle{remark}
\newtheorem{rem}[thm]{Remark}
\numberwithin{equation}{section}
\title{Bi-Lagrangian structures on nilmanifolds}
\author{M.~J.~D.~Hamilton}
\address{      Fachbereich Mathematik\\
               Universit\"at Stuttgart\\
               Pfaffenwaldring 57\\
               70569 Stuttgart\\
               Germany}
\email{mark.hamilton@math.lmu.de}
\date{\today}
\begin{document}

\begin{abstract} We study bi-Lagrangian structures (a symplectic form with a pair of complementary Lagrangian foliations, also known as para-K\"ahler or K\"unneth structures) on nilmanifolds of dimension less than or equal to $6$. In particular, building on previous work of several authors, we determine which $6$-dimensional nilpotent Lie algebras admit a bi-Lagrangian structure. In dimension $6$, there are (up to isomorphism) $26$ nilpotent Lie algebras which admit a symplectic form, $16$ of which admit a bi-Lagrangian structure and $10$ of which do not. We also calculate the curvature of the canonical connection of these bi-Lagrangian structures. 
\end{abstract}

\maketitle

\section{Introduction}
In this article we are interested in bi-Lagrangian structures on smooth manifolds, defined as follows:
\begin{defn}
Let $M$ be a smooth manifold. Then a {\em bi-Lagrangian structure} consists of a symplectic form $\omega\in\Omega^2(M)$ and a pair $\mathcal{F},\mathcal{G}$ of Lagrangian foliations on $M$ such that $TM=T\mathcal{F}\oplus T\mathcal{G}$. Bi-Lagrangian structures are also known as {\em K\"unneth structures}.
\end{defn}
Recall that according to the Frobenius theorem a {\em foliation} $\mathcal{F}$ on a smooth manifold $M$ is given by a distribution $T\mathcal{F}\subset TM$ which is integrable in the sense that $[X,Y]\in \Gamma(T\mathcal{F})$ for all sections $X,Y\in \Gamma(T\mathcal{F})$. A foliation $\mathcal{F}$ on a symplectic manifold $(M^{2n},\omega)$ is called {\em Lagrangian} if $T\mathcal{F}$ is a Lagrangian subbundle of $(TM,\omega)$, i.e.~the rank of $T\mathcal{F}$ is $n=\frac{1}{2}\dim M$ and $\omega(X,Y)=0$ for all $X,Y\in T\mathcal{F}$. 

In the following we will consider foliations as integrable distributions and place less emphasis on foliation-specific aspects such as the global topology or the structure of leaves.

For background on bi-Lagrangian structures see, for instance, \cite{Bo}, \cite{ES}, \cite{He} and the forthcoming book \cite{HK}. Bi-Lagrangian structures appear frequently in other contexts of geometry, for example, hypersymplectic structures, introduced by Hitchin \cite{Hit}, and Anosov symplectomorphisms define bi-Lagrangian structures (see Chapters 5 and 8 in \cite{HK}).
\begin{ex}\label{ex:standard bi-Lag}
The simplest example of a bi-Lagrangian structure is given by $M=\mathbb{R}^{2n}=\mathbb{R}^n\times\mathbb{R}^n$ with coordinates $x_1,\ldots,x_n,y_1,\ldots,y_n$, symplectic form 
\begin{equation*}
\textstyle\omega_0=\sum_{i=1}^ndx_i\wedge dy_i
\end{equation*}
and Lagrangian foliations $\mathcal{F}_0$ and $\mathcal{G}_0$, whose leaves are given by the affine Lagrangian subspaces $\mathbb{R}^n\times\{\ast\}$ and $\{\ast\}\times\mathbb{R}^n$. This bi-Lagrangian structure descends to the torus $T^{2n}$.
\end{ex}
Non-trivial examples of bi-Lagrangian structures are usually difficult to construct. An interesting class of examples comes from nilmanifolds. We want to study left-invariant bi-Lagrangian structures on Lie groups $G$, which we can equivalently think of as the following linear structures on the Lie algebra $\mathfrak{g}$ of $G$:
\begin{defn}
Let $\mathfrak{g}$ be a real Lie algebra. A {\em symplectic form} on $\mathfrak{g}$ is a closed and non-degenerate $2$-form $\omega \in \Lambda^2\mathfrak{g}^*$ and a {\em foliation} on $\mathfrak{g}$ is a subalgebra $\mathcal{F}\subset\mathfrak{g}$. A {\em bi-Lagrangian structure} on $\mathfrak{g}$ consists of a symplectic form $\omega$ and two Lagrangian subalgebras $\mathcal{F},\mathcal{G}\subset\mathfrak{g}$ such that $\mathfrak{g}=\mathcal{F}\oplus \mathcal{G}$ (vector space direct sum).
\end{defn}
Bi-Lagrangian structures on Lie algebras in general have been studied, for example, in \cite{Bai}, \cite{BB} and \cite{Ka}.

In the case of a nilpotent Lie algebra $\mathfrak{g}$, the corresponding left-invariant bi-Lagrangian structure on the associated, simply connected nilpotent Lie group $G$ induces a bi-Lagrangian structure on compact nilmanifolds $G/\Gamma$, where $\Gamma\subset G$ is a lattice, acting by left-multiplication on $G$. According to a theorem of Malcev \cite{Mal} a simply connected nilpotent Lie group $G$ has a lattice if and only if its Lie algebra $\mathfrak{g}$ admits a basis with rational structure constants. This is the case in all examples that we consider.

In this paper we discuss the existence of bi-Lagrangian structures on symplectic nilpotent Lie algebras $\mathfrak{g}$ of dimension $2$, $4$ and $6$. Existence of a bi-Lagrangian structure is shown by exhibiting a specific example of a symplectic form $\omega$ on $\mathfrak{g}$ and two complementary Lagrangian subalgebras $\mathcal{F},\mathcal{G}\subset\mathfrak{g}$. Proving non-existence of a bi-Lagrangian structure (for any symplectic form on $\mathfrak{g}$) is more involved. 
The main result can be summarized as follows:
\begin{thm}
\begin{enumerate}
\item In dimension $2$ there is a single nilpotent Lie algebra up to isomorphism. It admits a bi-Lagrangian structure.
\item In dimension $4$ there are three nilpotent Lie algebras up to isomorphism, each of which admits a symplectic form. Two of them admit a bi-Lagrangian structure and one of them does not (see Table \ref{table:4-dim} in the appendix).
\item In dimension $6$ there are $26$ nilpotent Lie algebras up to isomorphism that admit a symplectic form. $16$ of them admit a bi-Lagrangian structure and $10$ of them do not (see Table \ref{table:symp}).
\end{enumerate}
\end{thm}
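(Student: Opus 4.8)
The statement is a finite classification, so the plan is to reduce it to a bounded number of explicit verifications and then run two rather different case-by-case arguments: constructions for the algebras that do carry a bi-Lagrangian structure, and obstructions for those that do not. In dimension~$2$ there is nothing to prove: the unique nilpotent Lie algebra is abelian, $\mathfrak{g}=\mathbb{R}^{2}$, every non-zero $2$-form is symplectic, and the two coordinate lines form a pair of (automatically abelian) complementary Lagrangian subalgebras. For dimensions~$4$ and~$6$ I would start from the classical classification of nilpotent Lie algebras up to isomorphism together with the known determination of which of them admit a symplectic form; this produces exactly the three isomorphism classes in dimension~$4$ and the $26$ symplectic classes in dimension~$6$ appearing in the statement. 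The problem then becomes: for a fixed structure-constant representative of each of these algebras, decide whether \emph{some} symplectic form on it carries a pair of complementary Lagrangian subalgebras.

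For the sixteen six-dimensional and the two four-dimensional algebras claimed to be bi-Lagrangian, the argument is constructive. For each such $\mathfrak{g}$ I would exhibit an explicit symplectic form $\omega\in\Lambda^{2}\mathfrak{g}^{*}$ in the chosen basis together with two $n$-dimensional subspaces $\mathcal{F},\mathcal{G}\subset\mathfrak{g}$, and verify three things: that $\mathcal{F}$ and $\mathcal{G}$ are closed under the bracket (a finite check on basis vectors); that $\omega$ restricts to zero on each of them, whence they are Lagrangian since they already have dimension $n=\tfrac{1}{2}\dim\mathfrak{g}$; and that a basis of $\mathcal{F}$ followed by a basis of $\mathcal{G}$ is a basis of $\mathfrak{g}$. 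These are routine linear-algebra verifications, to be tabulated in Tables~\ref{table:4-dim} and~\ref{table:symp}. In practice one locates candidates by taking $\mathcal{F}$ to be an abelian subalgebra absorbing as much of the lower central series as possible and $\mathcal{G}$ a transverse subalgebra on which the relevant part of $\omega$ vanishes; when $\mathfrak{g}$ is isomorphic to a cotangent-type extension $\mathfrak{h}\ltimes\mathfrak{h}^{*}$ one may often take $\mathcal{F}=\mathfrak{h}^{*}$ and $\mathcal{G}=\mathfrak{h}$.

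The substantive half is non-existence: for each of the ten six-dimensional and the one four-dimensional algebra one must exclude \emph{every} symplectic form. My plan is, for each such $\mathfrak{g}$, first to determine the space of symplectic $2$-forms modulo the action of $\mathrm{Aut}(\mathfrak{g})$ --- using that $d\omega=0$ cuts out a linear subspace of $\Lambda^{2}\mathfrak{g}^{*}$ and $\omega^{n}\neq0$ a Zariski-open condition, so the symplectic forms reduce to a short list of normal forms --- and then to show that no normal form admits a splitting $\mathfrak{g}=\mathcal{F}\oplus\mathcal{G}$ into Lagrangian subalgebras. The obstructions I would combine are: (a) any $n$-dimensional subalgebra $\mathcal{F}$ satisfies $\dim\bigl(\mathcal{F}\cap[\mathfrak{g},\mathfrak{g}]\bigr)\ge\dim[\mathfrak{g},\mathfrak{g}]-n$, and a bi-Lagrangian splitting forces $\bigl(\mathcal{F}\cap[\mathfrak{g},\mathfrak{g}]\bigr)\oplus\bigl(\mathcal{G}\cap[\mathfrak{g},\mathfrak{g}]\bigr)\subseteq[\mathfrak{g},\mathfrak{g}]$, so $[\mathfrak{g},\mathfrak{g}]$ cannot be too large; (b) $[\mathcal{F},\mathcal{F}]\oplus[\mathcal{G},\mathcal{G}]\subseteq[\mathfrak{g},\mathfrak{g}]$, where, since $\mathcal{F}$ and $\mathcal{G}$ are $n$-dimensional nilpotent Lie algebras, each is highly constrained (for $n=3$ it is $\mathbb{R}^{3}$ or $\mathfrak{h}_{3}$, so $\dim[\mathcal{F},\mathcal{F}]\le1$); (c) closedness of $\omega$ forces $\omega\bigl([\mathfrak{g},\mathfrak{g}],Z(\mathfrak{g})\bigr)=0$, i.e.\ the center is $\omega$-orthogonal to the commutator; and (d) more generally, for each term $\gamma_{i}$ of the lower central series one has $\dim(\mathcal{F}\cap\gamma_{i})+\dim(\mathcal{G}\cap\gamma_{i})\le\dim\gamma_{i}$, which, played against the isotropy conditions, is frequently decisive. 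As a prototype, for the four-dimensional filiform algebra the lower central series forces one of $\mathcal{F},\mathcal{G}$ to contain the one-dimensional center, but $\omega$ pairs that center non-trivially with a direction which the same subalgebra is then also forced to contain, contradicting isotropy; analogous, more elaborate combinations handle the ten six-dimensional cases.

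The hard part is this last step. There is no uniform argument: each of the eleven algebras needs its own bespoke combination of (a)--(d), and --- crucially --- one must control \emph{all} symplectic forms on $\mathfrak{g}$ up to automorphism, not just one convenient representative, which is the most laborious and error-prone ingredient. I expect the tightest cases to be those in which $\mathfrak{g}$ admits a vector-space splitting $\mathcal{F}\oplus\mathcal{G}$ that is complementary and Lagrangian yet fails to be bi-Lagrangian only because the candidates narrowly fail to be subalgebras for every admissible $\omega$; ruling these out requires a careful parametrization of the $n$-dimensional subalgebras of $\mathfrak{g}$ through its commutator filtration.
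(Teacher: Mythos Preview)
Your overall architecture---explicit constructions for the positive cases, bespoke case-by-case obstructions for the negative ones---matches the paper, and the existence half is essentially identical. The non-existence half, however, is organized differently.

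First, the paper does \emph{not} classify symplectic forms modulo $\mathrm{Aut}(\mathfrak{g})$. It simply writes down the general closed $2$-form in the chosen basis, reads off which coefficients are forced to be non-zero by $\omega^{n}\neq0$, and uses only those inequalities. This is both less work and more robust: the paper explicitly notes that the published $\mathrm{Aut}$-classifications of symplectic forms on $6$-dimensional nilpotent Lie algebras contain errors, so your plan to rely on normal forms is riskier than necessary.

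Second, the paper's core non-existence mechanism is not your structural obstructions (a), (c), (d), but rather a direct parametrization of $3$-dimensional subalgebras. One of $\mathcal{F},\mathcal{G}$ must contain a vector with non-zero $e_{1}$-component; writing $f_{1}=e_{1}+\sum a_{i}e_{i}$, $f_{2}=\sum b_{j}e_{j}$ and imposing $[\mathcal{F},[\mathcal{F},\mathcal{F}]]=0$ (your (b), the only one of your four tools the paper really uses) pins the basis down to a short explicit list. From there the argument bifurcates: either a forced coefficient of $\omega$ makes $\mathcal{F}$ non-isotropic (as in $L_{4}$, $L_{4}\oplus A_{2}$, $L_{6,15}$, $L_{6,18}$, $L_{6,19}$, $L_{6,21}$), or one writes a complementary basis for $\mathcal{G}$ and shows that some bracket $[g_{i},g_{j}]$ or $[g_{i},[g_{i},g_{j}]]$ is a non-zero multiple of a vector already in $\mathcal{F}$, so $\mathcal{G}$ cannot be a subalgebra. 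In three cases ($L_{5,6}\oplus A_{1}$, $L_{6,14}$, $L_{6,17}^{+}$) this second mechanism shows that no pair of complementary $3$-dimensional subalgebras exists at all, independently of~$\omega$---a purely Lie-theoretic obstruction that your dimension-count inequalities (a), (d) do not detect. Your proposal is viable in principle, but the paper's hands-on parametrization is sharper and avoids the $\mathrm{Aut}$-classification entirely.
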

\begin{rem} The general existence question for a single Lagrangian foliation on nilpotent Lie algebras has been settled before by Baues and Cort\'es: According to Corollary 3.13.~in \cite{BC} every symplectic form on a nilpotent Lie algebra admits a Lagrangian foliation.
\end{rem}
It is an interesting observation, originally perhaps due to H.~Hess \cite{He}, that every bi-Lagrangian structure $(\omega,\mathcal{F},\mathcal{G})$ on a smooth, $2n$-dimensional manifold $M$ defines an associated canonical affine connection $\nabla$ on $TM$, see Section \ref{sect:prep} for the definition. It has the following properties:
\begin{enumerate}
\item $\nabla$ preserves both subbundles $T\mathcal{F}$ and $T\mathcal{G}$
\item $\nabla$ is symplectic, i.e.~$\omega$ is parallel with respect to $\nabla$
\item $\nabla$ is torsion-free: $\nabla_XY-\nabla_YX=[X,Y]$ for all $X,Y\in\mathfrak{X}(M)$.
\end{enumerate}
\begin{rem}\label{rem:canonical charact}
The canonical connection of a bi-Lagrangian structure is uniquely characterized by these three properties.
\end{rem}
The canonical connection coincides with the Levi-Civita connection of a certain pseudo-Riemannian metric $g$ associated to the bi-Lagrangian structure: We denote the projections of a vector field $X$ onto $T\mathcal{F}$ and $T\mathcal{G}$ by $X_F$ and $X_G$ and define
\begin{equation*}
I\colon TM\longrightarrow TM,\quad X_F+X_G\longmapsto X_F-X_G
\end{equation*}
and
\begin{equation*}
g\colon TM\times TM\longrightarrow\mathbb{R},\quad (X,Y)\longmapsto \omega(IX,Y).
\end{equation*}
Then $I^2=\mathrm{Id}_{TM}$ and $g$ is a pseudo-Riemannian metric of neutral signature $(n,n)$ on the manifold $M^{2n}$. The pair $(I,g)$ is called a {\em para-K\"ahler structure} on $M$. It turns out that the canonical connection of the bi-Lagrangian structure $(\omega,\mathcal{F},\mathcal{G})$ is equal to the Levi-Civita connection of $g$; cf.~\cite{CFG, ES}.

We are interested especially in the curvature of the canonical connection. It is easy to see that for the standard bi-Lagrangian structure $(\omega_0,\mathcal{F}_0,\mathcal{G}_0)$ on $\mathbb{R}^{2n}$, considered in Example \ref{ex:standard bi-Lag}, the curvature tensor $R$ vanishes identically. There is a Darboux-type converse to this statement: The curvature of the canonical connection of any bi-Lagrangian structure vanishes if and only if it is locally isomorphic to the standard structure $(\omega_0,\mathcal{F}_0,\mathcal{G}_0)$.

We calculate the curvature of the canonical connection for all our examples and show:
\begin{thm}
\begin{enumerate}
\item In dimension $2$ and $4$ the canonical connection is flat for all our examples of bi-Lagrangian structures.
\item In dimension $6$ the canonical connection is flat for $8$ examples and non-flat for the remaining $8$ examples of bi-Lagrangian structures. All our examples of bi-Lagrangian structures are Ricci-flat (see Table \ref{table:curvature}).
\end{enumerate}
\end{thm}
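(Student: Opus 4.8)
The plan is to turn the statement into a finite piece of linear algebra on the Lie algebras listed in Table~\ref{table:symp}. Since the chosen symplectic form $\omega$ and the Lagrangian subalgebras $\mathcal F,\mathcal G$ are left-invariant on the corresponding nilpotent Lie group $G$, so are the para-K\"ahler data $(I,g)$, and hence — using the identification of the canonical connection $\nabla$ with the Levi-Civita connection of $g$ recalled in Section~\ref{sect:prep} — the connection $\nabla$ itself. A left-invariant affine connection is nothing but a bilinear product $\mathfrak g\times\mathfrak g\to\mathfrak g$, $(X,Y)\mapsto\nabla_XY$, and for the Levi-Civita connection of a left-invariant metric this product is given by the Koszul formula
\begin{equation*}
2\,g(\nabla_XY,Z)=g([X,Y],Z)-g([Y,Z],X)+g([Z,X],Y),\qquad X,Y,Z\in\mathfrak g,
\end{equation*}
the directional-derivative terms dropping out because $g(Y,Z)$ is constant for left-invariant $Y,Z$. (Alternatively one solves directly for the unique $\nabla$ that is torsion-free, symplectic, and preserves $\mathcal F$ and $\mathcal G$, as in Remark~\ref{rem:canonical charact}.)

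Concretely, I would proceed algebra by algebra. Fix the adapted basis $e_1,\dots,e_{2n}$ in which $\omega$ and the splitting $\mathfrak g=\mathcal F\oplus\mathcal G$ are in normal form, so that $Ie_i=\pm e_i$ and $g_{ij}=\omega(Ie_i,e_j)$ are read off immediately; feed the structure constants $de^k=\sum_{i<j}c^k_{ij}\,e^i\wedge e^j$ and the $g_{ij}$ into the Koszul formula to get all $\nabla_{e_i}e_j$; then compute
\begin{equation*}
R(e_i,e_j)e_k=\nabla_{e_i}\nabla_{e_j}e_k-\nabla_{e_j}\nabla_{e_i}e_k-\nabla_{[e_i,e_j]}e_k
\end{equation*}
and $\mathrm{Ric}(e_j,e_k)=\mathrm{tr}\bigl(X\mapsto R(X,e_j)e_k\bigr)$. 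Flatness of a given example is then the finite check that all $R(e_i,e_j)e_k$ vanish, and Ricci-flatness that the matrix $(\mathrm{Ric}(e_j,e_k))$ is zero. For the abelian algebras $\mathbb R^{2}$, $\mathbb R^{4}$, $\mathbb R^{6}$ flatness is immediate, as these carry the standard structure of Example~\ref{ex:standard bi-Lag}; in dimension $4$ the only remaining case is $\mathfrak h_3\oplus\mathbb R$, a one-line computation. In dimension $6$ this is carried out for each of the $16$ algebras, and the resulting dichotomy — $8$ flat, $8$ non-flat, all $16$ Ricci-flat — is what is recorded in Table~\ref{table:curvature}.

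The bookkeeping is routine (and in practice I would run and cross-check it in a computer algebra system, double-checking the flat cases against the Darboux-type statement that a flat bi-Lagrangian structure is locally the standard one). The point that I expect to need the most thought, and that deserves a conceptual rather than case-by-case treatment, is the \emph{uniform} Ricci-flatness. I would first look for a structural reason: in each of our $6$-dimensional examples at least one of $\mathcal F,\mathcal G$ can be arranged to be an abelian ideal of $\mathfrak g$ (this should be visible from the tables), so that the associated foliation is $\nabla$-parallel with flat, totally geodesic leaves, and I would try to combine this with nilpotency and unimodularity of $\mathfrak g$ — which makes the relevant curvature endomorphisms nilpotent and forces the two partial traces of $R$ to cancel — to conclude $\mathrm{Ric}=0$ without inspecting the list. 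If such an argument fails to cover some algebra, the fallback is simply to exhibit the vanishing Ricci tensor in that computation. A final, more mundane caveat: the sign of $I$ on $\mathcal F$ versus $\mathcal G$ and the sign convention for $R$ must be kept consistent with Section~\ref{sect:prep}; changing them flips individual curvature components but, reassuringly, not the flat/non-flat and Ricci-flat conclusions.
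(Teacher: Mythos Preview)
Your proposal is correct and follows essentially the same route as the paper: a finite, case-by-case computation of $\nabla$, $R$ and $\mathrm{Ric}$ on each Lie algebra in the tables. The only real difference is the formula used to pin down $\nabla$: the paper works directly from the bi-Lagrangian definition in Section~\ref{sect:prep} (the operator $D$ together with projections onto $\mathcal F$ and $\mathcal G$), whereas you go through the Koszul formula for the Levi-Civita connection of $g$. Since the two connections coincide, either route produces the same $\nabla_{e_i}e_j$, and the paper in fact cross-checks its answers against the characterisation in Remark~\ref{rem:canonical charact}, which is close in spirit to your suggestion.

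One caution on your hoped-for structural explanation of Ricci-flatness: the claim that in each $6$-dimensional example one of $\mathcal F,\mathcal G$ can be taken to be an abelian ideal does not survive inspection of the tables. For instance, in $L_{6,2}$ with $\mathcal F=\{e_1,e_3,e_5\}$ and $\mathcal G=\{e_1+e_2,e_4,e_5-e_6\}$ neither subalgebra is abelian (e.g.\ $[e_1,e_3]$ has an $e_5$-component, and $[e_1+e_2,e_4]$ is a nonzero multiple of $e_5-e_6$), and similar phenomena occur for $L_{6,12}$. The paper makes no attempt at a uniform argument and simply records $\mathrm{Ric}=0$ case by case; your stated fallback is exactly that, and it suffices.
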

Examples of non-flat, Ricci-flat para-K\"ahler structures on nilpotent Lie algebras have been constructed before in \cite{R} and \cite{CR}. The latter reference contains a general study of the Ricci curvature of para-K\"ahler structures (and, more generally, almost para-Hermitian structures, such as nearly para-K\"ahler structures).

An analysis of the (Ricci) curvature of related structures, such as {\em tri-Lagrangian structures} studied in \cite{G, CS}, will be left for future research.

\subsection*{Notation}
All nilpotent Lie algebras that we consider are real. For a nilpotent Lie algebra $\mathfrak{g}$ we denote by $e_1,\ldots,e_n$ a basis and by $\alpha_1,\ldots,\alpha_n$ the dual basis. The differentials $d\alpha_i$ are related to the commutators $[e_j,e_k]$ by
\begin{equation*}
d\alpha_i(e_j,e_k)=-\alpha_i([e_j,e_k]).
\end{equation*}
We set $\alpha_{ij}=\alpha_i\wedge\alpha_j$. A foliation (subalgebra) $\mathcal{F}$ in a nilpotent Lie algebra is specified by a basis $\{f_1,\ldots,f_m\}$.

We follow the notation for Lie algebras in \cite{BM}: $A_n$ denotes an $n$-dimensional abelian Lie algebra and $L_m$ or $L_{m,k}$ an $m$-dimensional nilpotent Lie algebra.

In the calculation of the canonical connection $\nabla$ of a bi-Lagrangian structure only the non-vanishing components in the bases for the Lagrangian foliations are given. In the calculation of the curvature tensor $R$ all non-zero components up to the symmetry $R(X,Y)Z=-R(Y,X)Z$ are given.

\section{Preparations}\label{sect:prep}
Suppose that $\mathfrak{g}$ is a $4$- or $6$-dimensional nilpotent Lie algebra and we want to prove that $\mathfrak{g}$ does not admit a bi-Lagrangian structure. We will frequently use the following standard lemma.
\begin{lem}\label{lem:subalg}
Every subalgebra of a nilpotent Lie algebra is nilpotent. If $\mathfrak{h}$ is an $n$-dimensional nilpotent Lie algebra, then $\dim [\mathfrak{h},\mathfrak{h}]\leq n-2$. In particular, every $2$-dimensional nilpotent Lie algebra is abelian and every $3$-dimensional nilpotent Lie algebra $\mathfrak{h}$ satisfies $[\mathfrak{h},[\mathfrak{h},\mathfrak{h}]]=0$.
\end{lem}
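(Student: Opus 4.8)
The plan is to prove the three assertions of Lemma~\ref{lem:subalg} in sequence, since each follows quickly from the preceding ones together with standard facts about the lower central series.

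First, for the statement that every subalgebra $\mathfrak{h}\subset\mathfrak{g}$ of a nilpotent Lie algebra is nilpotent, I would argue directly from the lower central series. Recall $\mathfrak{g}^{(1)}=\mathfrak{g}$, $\mathfrak{g}^{(k+1)}=[\mathfrak{g},\mathfrak{g}^{(k)}]$, and nilpotency means $\mathfrak{g}^{(N)}=0$ for some $N$. A straightforward induction shows $\mathfrak{h}^{(k)}\subset\mathfrak{g}^{(k)}$ for all $k$, because $\mathfrak{h}^{(1)}=\mathfrak{h}\subset\mathfrak{g}=\mathfrak{g}^{(1)}$ and, assuming $\mathfrak{h}^{(k)}\subset\mathfrak{g}^{(k)}$, we get $\mathfrak{h}^{(k+1)}=[\mathfrak{h},\mathfrak{h}^{(k)}]\subset[\mathfrak{g},\mathfrak{g}^{(k)}]=\mathfrak{g}^{(k+1)}$. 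Hence $\mathfrak{h}^{(N)}\subset\mathfrak{g}^{(N)}=0$, so $\mathfrak{h}$ is nilpotent.

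Next, for the bound $\dim[\mathfrak{h},\mathfrak{h}]\le n-2$ when $\dim\mathfrak{h}=n$: if $\mathfrak{h}$ is abelian the claim is trivial (and requires $n\ge 2$, which holds since the statement is about $n$-dimensional algebras with $n\ge 2$ in the cases of interest; for $n\le 1$ it is vacuous or trivial). If $\mathfrak{h}$ is not abelian, then since $\mathfrak{h}$ is nilpotent its center $Z(\mathfrak{h})$ is nonzero, so $\dim[\mathfrak{h},\mathfrak{h}]\le n-1$; I must improve this by one. The key observation is that for a nilpotent Lie algebra the quotient $\mathfrak{h}/[\mathfrak{h},\mathfrak{h}]$ cannot be one-dimensional: if it were, pick $X$ spanning $\mathfrak{h}$ modulo $[\mathfrak{h},\mathfrak{h}]$, so $\mathfrak{h}=\mathbb{R}X\oplus[\mathfrak{h},\mathfrak{h}]$ as a vector space; then $[\mathfrak{h},\mathfrak{h}]=[\mathbb{R}X+[\mathfrak{h},\mathfrak{h}],\ \mathbb{R}X+[\mathfrak{h},\mathfrak{h}]]=[\mathbb{R}X,[\mathfrak{h},\mathfrak{h}]]+[[\mathfrak{h},\mathfrak{h}],[\mathfrak{h},\mathfrak{h}]]\subset[\mathfrak{h},[\mathfrak{h},\mathfrak{h}]]=\mathfrak{h}^{(3)}$, forcing $\mathfrak{h}^{(2)}=\mathfrak{h}^{(3)}$, and then by induction $\mathfrak{h}^{(2)}=\mathfrak{h}^{(k)}$ for all $k\ge 2$; nilpotency gives $\mathfrak{h}^{(2)}=0$, i.e.\ $\mathfrak{h}$ abelian, a contradiction. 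Therefore $\dim\mathfrak{h}/[\mathfrak{h},\mathfrak{h}]\ge 2$, which is exactly $\dim[\mathfrak{h},\mathfrak{h}]\le n-2$. I expect this step — ruling out codimension one for the commutator subalgebra — to be the crux of the argument, though it is short.

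Finally, the two "in particular" clauses are immediate consequences: if $\dim\mathfrak{h}=2$ then $\dim[\mathfrak{h},\mathfrak{h}]\le 0$, so $\mathfrak{h}$ is abelian; if $\dim\mathfrak{h}=3$ then $\dim[\mathfrak{h},\mathfrak{h}]\le 1$, and since $[\mathfrak{h},\mathfrak{h}]$ is itself a subalgebra of a nilpotent Lie algebra, hence nilpotent, a one-dimensional (or zero-dimensional) algebra is abelian; moreover $[\mathfrak{h},[\mathfrak{h},\mathfrak{h}]]$ is a proper subspace of the (at most) one-dimensional $[\mathfrak{h},\mathfrak{h}]$ — it must be properly contained because otherwise $\mathfrak{h}^{(2)}=\mathfrak{h}^{(3)}=\cdots$ contradicts nilpotency as above — and therefore $[\mathfrak{h},[\mathfrak{h},\mathfrak{h}]]=0$. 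This completes the proof.
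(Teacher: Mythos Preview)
Your proof is correct. The paper itself does not prove this lemma; it is stated there as a ``standard lemma'' and used without proof throughout. Your argument supplies the usual justification: the inclusion $\mathfrak{h}^{(k)}\subset\mathfrak{g}^{(k)}$ for subalgebras, and the observation that $\dim\mathfrak{h}/[\mathfrak{h},\mathfrak{h}]\ge 2$ for a nonabelian nilpotent $\mathfrak{h}$ (since a single generator modulo $[\mathfrak{h},\mathfrak{h}]$ would force $\mathfrak{h}^{(2)}=\mathfrak{h}^{(3)}$). One minor remark: the inequality $\dim[\mathfrak{h},\mathfrak{h}]\le n-2$ tacitly assumes $n\ge 2$, as you note; the paper only ever applies it in dimensions $2$ and $3$, so this is harmless.
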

Lemma \ref{lem:subalg} restricts the possible vector subspaces in $\mathfrak{g}$ that are subalgebras and hence define foliations. Together with the condition that the symplectic form $\omega$ is closed and non-degenerate this can be used to rule out the existence of a bi-Lagrangian structure on $\mathfrak{g}$.

Suppose that $(\omega,\mathcal{F},\mathcal{G})$ is a bi-Lagrangian structure on a smooth manifold $M$. We want to define the associated canonical connection: For vector fields $X,Y\in\mathfrak{X}(M)$ let $D(X,Y)\in\mathfrak{X}(M)$ be the unique vector field defined by
\begin{equation*}
i_{D(X,Y)}\omega=L_Xi_Y\omega.
\end{equation*}
Then we set
\begin{align*}
\nabla_XY&=D(X_F,Y)_F+[X_G,Y]_F\quad\forall Y\in\Gamma(T\mathcal{F})\\
\nabla_XY&=D(X_G,Y)_G+[X_F,Y]_G\quad\forall Y\in\Gamma(T\mathcal{G}).
\end{align*}
One can check that these expressions define connections on the vector bundles $T\mathcal{F}$, $T\mathcal{G}$. The direct sum defines an affine connection $\nabla$ on $TM$, called the {\em canonical connection} or {\em K\"unneth connection}.

The curvature $R$ of the canonical connection is defined in the standard way by
\begin{equation*}
R(X,Y)Z=\nabla_X\nabla_YZ-\nabla_Y\nabla_XZ-\nabla_{[X,Y]}Z.
\end{equation*}
The curvature tensor satisfies the Bianchi identity
\begin{equation*}
R(X,Y)Z + R(Y, Z)X + R(Z, X)Y = 0\quad\forall X,Y,Z\in TM.
\end{equation*}
In addition, the tangent bundle $TM$ is flat along the leaves of both $\mathcal{F}$ and $\mathcal{G}$:
\begin{equation*}
R(X,Y)Z=0\quad\forall Z\in TM
\end{equation*}
whenever $X,Y\in T\mathcal{F}$ or $X,Y\in T\mathcal{G}$. Together with the Bianchi identity we get the symmetry
\begin{equation*}
R(X,Y)Z=R(X,Z)Y\quad\forall X\in TM
\end{equation*}
whenever $Y,Z\in T\mathcal{F}$ or $Y,Z\in T\mathcal{G}$.

Finally, the Ricci curvature $\mathrm{Ric}(X,Y)$ of vectors $X,Y\in TM$ is
defined as the trace of the map
\begin{equation*}
TM\longrightarrow TM,\quad Z\longmapsto R(Z, X)Y .
\end{equation*}

\section{Two- and four-dimensional nilpotent Lie algebras}

The case of dimension $2$ is trivial: there is a single nilpotent $2$-dimensional Lie algebra, the abelian Lie algebra $A_2$. A bi-Lagrangian structure is given by the symplectic form $\omega=\alpha_{12}$ and the Lagrangian foliations $\mathcal{F}=\{e_1\},\mathcal{G}=\{e_2\}$. The canonical connection $\nabla$ is trivial in the basis $e_1,e_2$ and its curvature $R$ vanishes.

We now consider the $4$-dimensional case, see Table \ref{table:4-dim} in the appendix. There are three nilpotent Lie algebras of dimension $4$, $A_4$, $L_3\oplus A_1$ and $L_4$, each of which is symplectic. The first two admit a bi-Lagrangian structure, while the third one does not. We prove non-existence for any symplectic form in the case of $L_4$ in the following subsection.

\subsection{Non-existence of a bi-Lagrangian structure on $L_4$}

\begin{lem}\label{lem:4}
Suppose that $L_4$ has two complementary $2$-dimensional subalgebras $\mathcal{F},\mathcal{G}$. Then one of the two subalgebras, say $\mathcal{F}$, has a basis of the form
\begin{equation*}
e_1+a_2e_2+a_3e_3,\quad e_4.
\end{equation*}
\end{lem}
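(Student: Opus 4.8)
First I would recall the explicit structure equations for $L_4$. In the notation of \cite{BM}, the Lie algebra $L_4$ is the filiform Lie algebra on generators $e_1,\dots,e_4$ with nonzero brackets $[e_1,e_2]=e_3$, $[e_1,e_3]=e_4$ (equivalently, $d\alpha_3=-\alpha_{12}$, $d\alpha_4=-\alpha_{13}$, and $d\alpha_1=d\alpha_2=0$). The central descending series is $\mathfrak{g}\supset[\mathfrak{g},\mathfrak{g}]=\langle e_3,e_4\rangle\supset[\mathfrak{g},[\mathfrak{g},\mathfrak{g}]]=\langle e_4\rangle\supset 0$, so $e_4$ spans the center and $\langle e_3,e_4\rangle$ is the commutator subalgebra. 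These canonical subspaces are preserved by every automorphism of $L_4$, which is the structural fact the proof will exploit.

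The plan is to analyze how a $2$-dimensional subalgebra $\mathcal{F}$ can sit relative to $[\mathfrak{g},\mathfrak{g}]=\langle e_3,e_4\rangle$. Since $\mathcal{F}$ is $2$-dimensional it is abelian by Lemma \ref{lem:subalg}, so $[\mathcal{F},\mathcal{F}]=0$. Write a general vector as $v=x_1e_1+x_2e_2+x_3e_3+x_4e_4$; the ``leading part'' is the $(x_1,x_2)$-component, i.e. the image in $\mathfrak{g}/[\mathfrak{g},\mathfrak{g}]\cong\langle e_1,e_2\rangle$. I would distinguish cases by the dimension of the image of $\mathcal{F}$ in this $2$-dimensional quotient. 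If the image is $2$-dimensional, then $\mathcal{F}$ maps isomorphically onto $\langle e_1,e_2\rangle$, so $\mathcal{F}$ contains vectors $u,w$ with leading parts $e_1$ and $e_2$ respectively; but then $[u,w]$ has leading (degree-$3$) term $e_3\neq 0$, contradicting $[\mathcal{F},\mathcal{F}]=0$. Hence the image of $\mathcal{F}$ in the quotient is at most $1$-dimensional, and the same argument applies to $\mathcal{G}$; since $\mathcal{F}\oplus\mathcal{G}=\mathfrak{g}$ surjects onto the $2$-dimensional quotient, each image is exactly $1$-dimensional and together they span $\langle e_1,e_2\rangle$. Consequently $\mathcal{F}\cap[\mathfrak{g},\mathfrak{g}]$ and $\mathcal{G}\cap[\mathfrak{g},\mathfrak{g}]$ are each $1$-dimensional and their sum is $\langle e_3,e_4\rangle$.

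Now I claim one of the two, say $\mathcal{F}$, must contain the center $\langle e_4\rangle$. Indeed $\mathcal{F}\cap\langle e_3,e_4\rangle$ is a line $\ell_F$ and $\mathcal{G}\cap\langle e_3,e_4\rangle$ is a line $\ell_G$, with $\ell_F\oplus\ell_G=\langle e_3,e_4\rangle$; if neither equals $\langle e_4\rangle$ then both $\ell_F,\ell_G$ project nontrivially to $\langle e_3,e_4\rangle/\langle e_4\rangle\cong\langle e_3\rangle$, which is fine so far, so I need a further bracket argument. Pick $f\in\mathcal{F}$ with leading part $e_1$ (after scaling) — this is possible since, after relabeling, we may assume the image of $\mathcal{F}$ in $\langle e_1,e_2\rangle$ is the line through $e_1$ (the case where $\mathcal{F}$ projects to a line not containing $e_1$, hence to the $e_2$-line, is handled symmetrically and is in fact easier, as $e_2$ is not in the image of $\mathrm{ad}$). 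Then $[f,\ell_F]\subseteq\mathcal{F}$, and $[e_1+\cdots,\ x e_3+ye_4]=x[e_1,e_3]=xe_4$, so if $\ell_F\neq\langle e_4\rangle$ (i.e. $x\neq 0$) we get $e_4\in\mathcal{F}$, forcing $\ell_F=\langle e_4\rangle$ after all — a contradiction unless $\ell_F$ was already $\langle e_4\rangle$. Either way $e_4\in\mathcal{F}$, and since $\mathcal{F}$ is $2$-dimensional with $1$-dimensional image spanned by $e_1$ in the quotient, $\mathcal{F}$ has a basis $\{e_1+a_2e_2+a_3e_3,\ e_4\}$ (subtracting a multiple of $e_4$ from the first vector to kill its $e_4$-component).

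The main obstacle is the bookkeeping in the sub-case where $\mathcal{F}$ (or $\mathcal{G}$) projects onto the $e_2$-line rather than the $e_1$-line, and more generally ensuring the case split by the projection to $\mathfrak{g}/[\mathfrak{g},\mathfrak{g}]$ is exhaustive and that in every branch one of the two subalgebras ends up in the stated normal form after an allowable change of basis; I would organize this by first reducing, via an automorphism of $L_4$ rescaling $e_1$ and adjusting $e_2,e_3$, to the situation where the relevant leading vector is exactly $e_1$, and then reading off the $e_4\in\mathcal{F}$ conclusion from the single bracket $[e_1,e_3]=e_4$ as above.
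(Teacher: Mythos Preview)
Your argument is correct in substance but more elaborate than the paper's, and a couple of your side remarks are imprecise. The paper proceeds by a single bracket computation: since $\mathcal{F}\oplus\mathcal{G}=L_4$, one of the two (say $\mathcal{F}$) contains a vector $f_1=e_1+a_2e_2+a_3e_3+a_4e_4$; taking a complementary basis vector $f_2=b_2e_2+b_3e_3+b_4e_4$, one computes $[f_1,f_2]=-b_2e_3-b_3e_4$, and abelianness (Lemma~\ref{lem:subalg}) forces $b_2=b_3=0$, so $f_2\in\langle e_4\rangle$ and the claimed basis follows after subtracting $a_4e_4$ from $f_1$.

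Your approach via the image in $\mathfrak{g}/[\mathfrak{g},\mathfrak{g}]$ and the intersection $\ell_F=\mathcal{F}\cap[\mathfrak{g},\mathfrak{g}]$ reaches the same endpoint and the structural viewpoint is reasonable, but it adds bookkeeping without payoff here: your preliminary step that both images are exactly $1$-dimensional is already contained in the single bracket above. Two of your remarks need correction. First, the image of $\mathcal{F}$ need not be exactly $\langle e_1\rangle$ but only a line with nonzero $e_1$-component; this is harmless since your key computation $[f,xe_3+ye_4]=\pm xe_4$ is insensitive to the $e_2$-coefficient of $f$ (as $[e_2,e_3]=[e_2,e_4]=0$), so the automorphism you propose at the end is unnecessary. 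Second, the parenthetical that the ``$e_2$-line'' case is ``handled symmetrically and is in fact easier'' is misleading: a subalgebra projecting to $\langle e_2\rangle$ need not contain $e_4$ (for instance $\langle e_2,e_3\rangle$), so that case is dealt with not by a parallel argument on $\mathcal{F}$ but simply by passing to $\mathcal{G}$, which then has nonzero $e_1$-component --- this is exactly the ``say $\mathcal{F}$'' relabeling the statement permits.
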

\begin{proof}
It is clear that one of the subalgebras has a basis vector of the form
\begin{equation*}
\textstyle f_1=e_1+\sum_{i=2}^4a_ie_i.
\end{equation*}
Any other basis vector of the same subalgebra can be assumed to be of the form
\begin{equation*}
\textstyle f_2=\sum_{j=2}^4b_je_j.
\end{equation*}
We get
\begin{equation*}
[f_1,f_2] = -b_2e_3-b_3e_4.
\end{equation*}
Lemma \ref{lem:subalg} implies that $b_2=b_3=0$, hence the claim.
\end{proof}

\begin{prop}
The Lie algebra $L_4$ does not admit a bi-Lagrangian structure.
\end{prop}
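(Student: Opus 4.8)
The plan is to assume $L_4$ carries a bi-Lagrangian structure $(\omega,\mathcal{F},\mathcal{G})$ and derive a contradiction from the interaction between the normal form for $\mathcal{F}$ given by Lemma~\ref{lem:4} and the requirement that $\omega$ be a symplectic form (closed, non-degenerate) making $\mathcal{F}$ and $\mathcal{G}$ both Lagrangian. First I would fix the standard structure equations for $L_4$ in a basis $e_1,\dots,e_4$ — the $4$-dimensional filiform algebra, so that $de_i$ dualizes to brackets like $[e_1,e_2]=e_3$, $[e_1,e_3]=e_4$ and all others zero — and record the corresponding $d\alpha_i$: $d\alpha_1=d\alpha_2=0$, $d\alpha_3=-\alpha_{12}$, $d\alpha_4=-\alpha_{13}$ (signs per the paper's convention). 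By Lemma~\ref{lem:4} one Lagrangian subalgebra, say $\mathcal{F}$, has basis $f_1=e_1+a_2e_2+a_3e_3$, $f_2=e_4$.

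Next I would pin down $\mathcal{G}$. Since $\mathfrak{g}=\mathcal{F}\oplus\mathcal{G}$, a basis of $\mathcal{G}$ can be taken as $g_1=e_2+c_3e_3+c_4e_4$, $g_2=e_3+d_4e_4$ (after using $\mathcal{F}$ to clear the $e_1$- and $e_4$-components and rescaling). Imposing that $\mathcal{G}$ is a subalgebra forces conditions on the $c$'s and $d$'s: compute $[g_1,g_2]$ using the structure equations — the only bracket that can appear is a multiple of $e_4$ coming from $[e_2,e_3]$, which vanishes in $L_4$, so in fact $[g_1,g_2]$ lands outside $\mathcal{G}$ unless its $e_1$- and $e_2$- (and suitable $e_3$-) components vanish; tracking this should either kill parameters or, more likely, show $[g_1,g_2]=0$ automatically, so $\mathcal{G}$ is abelian and no obstruction arises from the subalgebra condition alone. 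The real leverage comes from $\omega$: write $\omega=\sum_{i<j}\omega_{ij}\alpha_{ij}$, impose $d\omega=0$ using $d\alpha_3=-\alpha_{12}$, $d\alpha_4=-\alpha_{13}$ (this gives $\omega_{34}=0$ and a relation among $\omega_{23},\omega_{14}$, namely $\omega_{14}=\omega_{23}$ up to sign), then impose the two Lagrangian conditions $\omega(f_1,f_2)=0$ and $\omega(g_1,g_2)=0$, and finally non-degeneracy $\omega^2\neq 0$.

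The heart of the argument — and the step I expect to be the main obstacle — is showing these linear constraints are jointly inconsistent. The closedness relation $\omega_{34}=0$ together with $\omega(e_4,\cdot)$ being built only from $\alpha_{14},\alpha_{24},\alpha_{34}$ means $i_{e_4}\omega=\omega_{14}\alpha_1+\omega_{24}\alpha_2$ (no $\alpha_3$ term); combined with $\omega_{14}=\pm\omega_{23}$ from $d\omega=0$, the Lagrangian condition $\omega(f_1,f_2)=\omega(e_1+a_2e_2+a_3e_3,e_4)=\omega_{14}+a_2\omega_{24}=0$ ties $a_2$ to $\omega_{14}$. I then expect that feeding all of this into $\omega(g_1,g_2)=0$ and the non-degeneracy $\omega\wedge\omega=2(\omega_{12}\omega_{34}-\omega_{13}\omega_{24}+\omega_{14}\omega_{23})\,\alpha_{1234}=2(-\omega_{13}\omega_{24}+\omega_{14}\omega_{23})\,\alpha_{1234}\neq 0$ forces $\omega_{14}\omega_{23}=\omega_{14}^2\ne 0$ hence $\omega_{14}\ne 0$, while simultaneously the Lagrangian conditions force a dependency that makes $\omega_{13}\omega_{24}=\omega_{14}^2$ as well, collapsing the nondegeneracy expression to $0$ — a contradiction. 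The delicate point is bookkeeping the parameters $a_2,a_3,c_3,c_4,d_4$ correctly so that no case (e.g.\ $\omega_{24}=0$) escapes; I would organize it as: (i) $d\omega=0$ kills $\omega_{34}$ and identifies $\omega_{14}$ with $\pm\omega_{23}$; (ii) non-degeneracy then requires $\omega_{14}\ne 0$ and $\omega_{13}$ or $\omega_{24}$ nonzero; (iii) the two Lagrangian equations, being two linear equations in the remaining freedom, force exactly the relation that annihilates $\omega^2$. Invoking Lemma~\ref{lem:subalg} once more, if needed, to exclude a stray $3$-dimensional subalgebra scenario closes any remaining gap.
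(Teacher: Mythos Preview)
Your outline contains a computational error in the closedness step, and that error is precisely what prevents the argument from closing. With the paper's conventions $d\alpha_3=\alpha_{12}$, $d\alpha_4=\alpha_{13}$, the only non-closed basis $2$-forms are
\[
d\alpha_{24}=-\alpha_2\wedge\alpha_{13}=\alpha_{123},\qquad d\alpha_{34}=\alpha_{12}\wedge\alpha_4=\alpha_{124},
\]
so $d\omega=0$ forces $\omega_{24}=0$ \emph{and} $\omega_{34}=0$, with \emph{no} relation between $\omega_{14}$ and $\omega_{23}$. Your claimed relation $\omega_{14}=\pm\omega_{23}$ is spurious, and the missing condition $\omega_{24}=0$ is exactly the key. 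With the correct conditions the paper's proof is immediate: $\omega\wedge\omega=2\omega_{14}\omega_{23}\,\alpha_{1234}$, so non-degeneracy forces $\omega_{14}\neq 0$; but then
\[
\omega(f_1,f_2)=\omega(e_1+a_2e_2+a_3e_3,\,e_4)=\omega_{14}+a_2\omega_{24}+a_3\omega_{34}=\omega_{14}\neq 0,
\]
so $\mathcal{F}$ is never Lagrangian. Contradiction. There is no need to parametrize $\mathcal{G}$ or impose its Lagrangian condition at all.

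By contrast, your version of the argument does not yield a contradiction even in outline: under your (incorrect) closedness constraints, the two Lagrangian conditions become $\omega_{14}+a_2\omega_{24}=0$ and $\omega_{23}+d_4\omega_{24}=\omega_{14}+d_4\omega_{24}=0$, and whenever $\omega_{24}\neq 0$ these are satisfied simply by taking $a_2=d_4=-\omega_{14}/\omega_{24}$, leaving $\omega_{13}$ free to make $\omega^2\neq 0$. So the ``collapse'' you anticipate in step (iii) never happens. The fix is to redo the $d\omega=0$ computation; once you see $\omega_{24}=0$, drop the entire discussion of $\mathcal{G}$ and conclude directly from $\omega(f_1,f_2)=\omega_{14}\neq 0$.
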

\begin{proof}
Any closed $2$-form on $L_4$ is of the form
\begin{equation*}
\omega=\omega_{12}\alpha_{12}+\omega_{13}\alpha_{13}+\omega_{23}\alpha_{23}+\omega_{14}\alpha_{14}.
\end{equation*}
If the form $\omega$ is symplectic, then $\omega_{14}\neq 0$. It follows that the subalgebra $\mathcal{F}$ in Lemma \ref{lem:4} cannot be Lagrangian.
\end{proof}

\subsection{Calculation of the curvature of bi-Lagrangian structures in Table \ref{table:4-dim}}

We calculate the canonical connection and the curvature of the bi-Lagrangian structure for $A_4$ and $L_3\oplus A_1$. It turns out that both examples are flat.

\subsubsection{$A_4$}
A simple calculation shows that the canonical connection $\nabla$ is trivial in the basis $e_1,e_2,e_3,e_4$ and $R=0$. 

\subsubsection{$L_3\oplus A_1$}
\begin{equation*}
\nabla_{e_1}e_1=-e_3,\quad \nabla_{e_1}e_2=-e_4.
\end{equation*}
\begin{equation*}
R=0.
\end{equation*}

\section{Six-dimensional nilpotent Lie algebras}
We now consider the $6$-dimensional case. The symplectic nilpotent Lie algebras of dimension $6$ have been determined (independently) by \cite{S}, \cite{KGM} and \cite{BM}. The latter two references contain explicit symplectic forms. See Table \ref{table:compare} for a comparison of notation in these three references, Table \ref{table:structure} for the structure constants and Table \ref{table:symp} for symplectic forms and bi-Lagrangian structures.

\begin{rem}\label{rem:errors} The lists in \cite{BM} and \cite{KGM} for symplectic forms on $6$-dimensional nilpotent Lie algebras contain several errors: 
\begin{itemize}
\item The $2$-forms in \cite[Table 3]{BM} for the following Lie algebras are not symplectic:
\begin{equation*}
L_{6,1},L_{6,12},L_{6,15},L_{6,16},L_{6,17}^-.
\end{equation*}
\item The following $2$-forms in \cite[Section 3]{KGM} are not symplectic:
\begin{equation*}
16.\,\omega_2\quad\text{and}\quad 23.\,\omega_3.
\end{equation*}
Because of this error in \cite{KGM} we do not use the classification of symplectic forms (up to automorphisms of the Lie algebras) that is stated in this reference.
\end{itemize}
\end{rem}
There are in total $26$ nilpotent $6$-dimensional Lie algebras (up to isomorphism) which admit a symplectic form, $16$ of which admit a bi-Lagrangian structure and $10$ of which do not admit such a structure. If a bi-Lagrangian structure exists, an example is given in Table \ref{table:symp}. We now prove non-existence (for any symplectic form) in the remaining cases.

\subsection{Non-existence of bi-Lagrangian structures}
We will repeatedly use the following lemma.
\begin{lem}\label{lem:lie algebra subspace sum} Let $\mathfrak{g}$ be a $6$-dimensional Lie algebra with basis $e_1,\ldots,e_6$.
\begin{enumerate}
\item If a $3$-dimensional vector subspace $\mathcal{F}\subset\mathfrak{g}$ has a basis vector with a non-zero $e_1$-component, then a basis of $\mathcal{F}$ is of the form
\begin{equation*}
\textstyle f_1=e_1+\sum_{i=2}^6a_ie_i,\quad f_2=\sum_{j=2}^6b_je_j,\quad f_3=\sum_{k=2}^6c_ke_k,
\end{equation*}
with $a_i,b_j,c_k\in\mathbb{R}$.
\item If $\mathfrak{g}=\mathcal{F}\oplus\mathcal{G}$ is a sum of two $3$-dimensional vector subspaces, then at least one of $\mathcal{F},\mathcal{G}$ has a basis as in (a).
\end{enumerate}

\end{lem}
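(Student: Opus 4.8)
The plan is to reduce the statement to elementary linear algebra; no properties of the Lie bracket are used, only dimension counts and Gaussian elimination.

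For part (a), I would start from the hypothesis, which gives a vector $v\in\mathcal{F}$ whose $e_1$-component is non-zero. After rescaling I may assume this component equals $1$, so $v=e_1+\sum_{i=2}^6 a_ie_i$; set $f_1=v$. Complete $f_1$ to a basis $\{f_1,g_2,g_3\}$ of $\mathcal{F}$ and write $g_2=\lambda_2 e_1+(\cdots)$, $g_3=\lambda_3 e_1+(\cdots)$, where $(\cdots)$ denotes a linear combination of $e_2,\ldots,e_6$. Replacing $g_2$ by $f_2:=g_2-\lambda_2 f_1$ and $g_3$ by $f_3:=g_3-\lambda_3 f_1$ leaves the triple a basis of $\mathcal{F}$ (the change-of-basis matrix is unipotent, hence invertible), and now $f_2,f_3$ have vanishing $e_1$-component, i.e.\ lie in $\mathrm{span}(e_2,\ldots,e_6)$. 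This is exactly the asserted normal form.

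For part (b), I would argue by contradiction. If neither $\mathcal{F}$ nor $\mathcal{G}$ had a basis vector — equivalently, any vector — with non-zero $e_1$-component, then both would be contained in the $5$-dimensional subspace $W=\mathrm{span}(e_2,\ldots,e_6)$, whence $\mathcal{F}+\mathcal{G}\subseteq W$ has dimension at most $5$. This contradicts $\dim(\mathcal{F}\oplus\mathcal{G})=\dim\mathfrak{g}=6$. Therefore at least one of $\mathcal{F},\mathcal{G}$ contains a vector with non-zero $e_1$-component, and applying part (a) to that subspace gives it a basis of the stated form.

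There is no real obstacle here; the only point requiring a line of justification is that the elimination step in (a) preserves the basis property, which holds because we only add scalar multiples of $f_1$ to the remaining vectors. The lemma is purely a bookkeeping device to fix a convenient basis before imposing the subalgebra and Lagrangian conditions, so I would keep the proof short.
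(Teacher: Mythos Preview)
Your proof is correct; the paper states this lemma without proof, treating it as an elementary linear-algebra observation, and your Gaussian-elimination argument is exactly the natural justification. There is nothing to add.
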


\subsubsection{$L_4\oplus A_2$}

\begin{lem}\label{lem:L4A2 symp}
A $2$-form $\omega$ on $L_4\oplus A_2$ is closed if and only if it is of the form
\begin{align*}
\omega&=\omega_{12}\alpha_{12}+\omega_{13}\alpha_{13}+\omega_{23}\alpha_{23}+\omega_{14}\alpha_{14}+\omega_{24}\alpha_{24}+\omega_{34}\alpha_{34}\\
&\quad+\omega_{15}\alpha_{15}+\omega_{25}\alpha_{25}+\omega_{16}\alpha_{16}.
\end{align*}
The form is symplectic if and only if $\omega_{16}\omega_{25}\omega_{34}\neq 0$.
\end{lem}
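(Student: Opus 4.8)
The plan is to read the structure constants of $L_4\oplus A_2$ off Table \ref{table:structure} and compute everything by hand. The key structural fact is that $[\mathfrak{g},\mathfrak{g}]$ is two-dimensional for $\mathfrak{g}=L_4\oplus A_2$, so in the chosen basis only two of the dual $1$-forms fail to be closed; say $d\alpha_1=\dots=d\alpha_4=0$ while $d\alpha_5=\alpha_{12}$ and $d\alpha_6=\alpha_{15}$ (the filiform tower $e_1\to e_5\to e_6$, with $A_2=\langle e_3,e_4\rangle$). Everything then reduces to exterior algebra with these two relations.

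For the closedness statement I would start from a general $2$-form $\omega=\sum_{i<j}\omega_{ij}\alpha_{ij}$ and apply $d$ termwise via $d(\alpha_i\wedge\alpha_j)=d\alpha_i\wedge\alpha_j-\alpha_i\wedge d\alpha_j$. Only terms involving $\alpha_5$ or $\alpha_6$ contribute, so $d\omega$ is a combination of the $3$-forms obtained from $\alpha_{a5}$ (giving $\alpha_a\wedge\alpha_{12}$), from $\alpha_{b6}$ (giving $\alpha_b\wedge\alpha_{15}$), and from $\alpha_{56}$ (giving $\alpha_{12}\wedge\alpha_6$); each of these either vanishes by a repeated index or equals $\pm$ one of the linearly independent basis $3$-forms $\alpha_{123},\alpha_{124},\alpha_{125},\alpha_{126},\alpha_{135},\alpha_{145}$. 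Matching coefficients then forces exactly $\omega_{26}=\omega_{35}=\omega_{36}=\omega_{45}=\omega_{46}=\omega_{56}=0$, which is precisely the list of surviving components in the statement.

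For non-degeneracy I would use that $\omega$ is symplectic iff $\omega^3\neq 0$, i.e.\ iff the Pfaffian of the skew-symmetric matrix $(\omega_{ij})$ is nonzero. With the closed form from the previous step, the entries in positions $(2,6),(3,6),(4,6),(5,6),(3,5),(4,5)$ all vanish. A nonzero term of the Pfaffian corresponds to a perfect matching of $\{1,\dots,6\}$ whose pairs are all nonzero entries: index $6$ is then forced to pair with $1$, index $5$ is then forced to pair with $2$, leaving $3$ paired with $4$. Hence the Pfaffian is $\pm\,\omega_{16}\omega_{25}\omega_{34}$, and $\omega$ is symplectic iff $\omega_{16}\omega_{25}\omega_{34}\neq 0$.

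Both steps are routine; the only things to watch are the signs in the wedge computations and confirming the linear independence of the $3$-forms that appear, so I do not expect a genuine obstacle here.
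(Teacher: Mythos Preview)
Your proposal is correct and is exactly the routine computation the paper has in mind; the lemma is stated without proof there, and your reading of the structure constants from Table~\ref{table:structure} ($d\alpha_5=\alpha_{12}$, $d\alpha_6=\alpha_{15}$, all others closed) together with the termwise differential and the Pfaffian/matching argument gives precisely the stated conditions.
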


\begin{lem}\label{lem:sub L4A2} Suppose that $L_4\oplus A_2$ has two complementary $3$-dimensional subalgebras $\mathcal{F},\mathcal{G}$. Then one of the two subalgebras, say $\mathcal{F}$, has a basis of either one of the following forms:
\begin{enumerate}
\item $e_1+a_2e_2+a_3e_3+a_4e_4,\quad e_5+b_3e_3+b_4e_4,\quad e_6$
\item $e_1+a_2e_2+a_5e_5+a_6e_6,\quad e_3+b_6e_6,\quad e_4+c_6e_6$
\item $e_1+a_2e_2+a_4e_4+a_5e_5,\quad e_3+b_4e_4,\quad e_6$
\item $e_1+a_2e_2+a_3e_3+a_5e_5,\quad e_4,\quad e_6$.
\end{enumerate}
\end{lem}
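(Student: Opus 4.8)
The plan is to turn the subalgebra condition on $\mathcal{F}$ into an invariance condition for the nilpotent operator $\mathrm{ad}(e_1)$ and then enumerate the possibilities. Recall the structure constants of $L_4\oplus A_2$ (Table~\ref{table:structure}, consistent with Lemma~\ref{lem:L4A2 symp}): in the chosen basis the only non-vanishing brackets involve $e_1$, namely $[e_1,e_2]=\pm e_5$ and $[e_1,e_5]=\pm e_6$; in particular $V:=\mathrm{span}(e_2,e_3,e_4,e_5,e_6)$ is an abelian ideal. Since $\mathfrak{g}=\mathcal{F}\oplus\mathcal{G}$ is a vector-space direct sum of two $3$-dimensional subspaces, Lemma~\ref{lem:lie algebra subspace sum}(b) lets us assume, after renaming $\mathcal{F}$ and $\mathcal{G}$ if necessary, that $\mathcal{F}$ has a basis of the form $f_1=e_1+\sum_{i=2}^{6}a_ie_i$, $f_2,f_3\in V$. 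Put $W:=\mathrm{span}(f_2,f_3)=\mathcal{F}\cap V$, a $2$-dimensional subspace of $V$; note that only the subspace decomposition $\mathfrak{g}=\mathcal{F}\oplus\mathcal{G}$ enters here, not the fact that $\mathcal{G}$ is a subalgebra.

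Since $V$ is abelian we have $[f_2,f_3]=0$, and since the $V$-component of $f_1$ commutes with everything in $V$ we have $[f_1,f_j]=\psi(f_j)$ for $j=2,3$, where $\psi:=\mathrm{ad}(e_1)|_V$ is the nilpotent endomorphism of $V$ determined by $e_2\mapsto\pm e_5\mapsto\pm e_6\mapsto 0$ and $e_3,e_4\mapsto 0$. Thus $\psi$ has image $\mathrm{span}(e_5,e_6)$, kernel $\mathrm{span}(e_3,e_4,e_6)$ and $\psi^3=0$. Because $\psi(V)\subset V$ and $\mathcal{F}\cap V=W$, the condition that $\mathcal{F}$ be a subalgebra is equivalent to $\psi(W)\subseteq W$. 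The $2$-dimensional $\psi$-invariant subspaces $W$ are then easily listed: either $W\subseteq\ker\psi=\mathrm{span}(e_3,e_4,e_6)$; or $0\neq\psi(W)\subsetneq W$, in which case $\psi(W)$ is a $1$-dimensional $\psi$-invariant subspace of $\mathrm{span}(e_5,e_6)$, hence $\psi(W)=\mathrm{span}(e_6)$, forcing $e_6\in W$ and, after rescaling and subtracting a multiple of $e_6$, $W=\mathrm{span}(e_5+u_3e_3+u_4e_4,\;e_6)$.

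It remains to normalize $\mathcal{F}=\mathrm{span}(f_1)\oplus W$ by subtracting elements of $W$ from $f_1$ and rescaling. In the last case above this clears the $e_5$- and $e_6$-components of $f_1$ and yields form~(a). If $W\subseteq\mathrm{span}(e_3,e_4,e_6)$ and $e_6\notin W$, then $W=\mathrm{span}(e_3+pe_6,\,e_4+qe_6)$, and clearing the $e_3$- and $e_4$-components of $f_1$ yields form~(b). If $W\subseteq\mathrm{span}(e_3,e_4,e_6)$ and $e_6\in W$, write $W=\mathrm{span}(e_6,v)$ with $0\neq v\in\mathrm{span}(e_3,e_4)$; normalizing $v$ and clearing the corresponding components of $f_1$ gives form~(c) when $v$ has non-zero $e_3$-component and form~(d) when $v\in\mathrm{span}(e_4)$. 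I expect the only delicate point to be the bookkeeping in this last step: one must check that the list of invariant planes $W$ is complete and that in each case precisely the claimed coordinates of $f_1$ can be eliminated (since $W$ contains no $e_1$-component, and in the relevant cases no $e_2$- or $e_5$-component), so that the reduced basis is exactly one of the four listed shapes. That each of (a)--(d) does define a subalgebra is then immediate from the bracket relations above.
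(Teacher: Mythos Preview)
Your proof is correct and takes a genuinely different route from the paper. The paper proceeds by direct computation: starting from a generic $f_2=\sum_{j\geq 2}b_je_j$ it computes $[f_1,f_2]=-(b_2e_5+b_5e_6)$ and $[f_1,[f_1,f_2]]=b_2e_6$, invokes Lemma~\ref{lem:subalg} to force $b_2=0$, and then branches on whether $b_5$ vanishes; inside the $b_5=0$ branch it further branches on the $e_3$- and $e_4$-coefficients of $f_2,f_3$ to reach the four listed shapes. Your argument instead recognises that, because $V=\mathrm{span}(e_2,\ldots,e_6)$ is abelian and $[f_1,\,\cdot\,]|_V=\psi:=\mathrm{ad}(e_1)|_V$, the subalgebra condition on $\mathcal{F}$ is exactly the invariance condition $\psi(W)\subseteq W$ for the $2$-plane $W=\mathcal{F}\cap V$; classifying the $\psi$-invariant planes of a single nilpotent Jordan block (plus a trivial summand) then gives the four cases at once. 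This is more conceptual and explains \emph{why} the list has the shape it does, whereas the paper's coefficient chase is closer to an algorithm one could apply uniformly to all the other Lie algebras treated in the section. Two cosmetic remarks: the signs in $[e_1,e_2]$ and $[e_1,e_5]$ are determined by the convention $d\alpha_i(e_j,e_k)=-\alpha_i([e_j,e_k])$, so there is no need for the $\pm$; and the clause ``I expect the only delicate point to be\ldots'' should be replaced by the actual verification, which you have in fact carried out in the preceding sentences.
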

\begin{proof}
Suppose that $\mathcal{F}$ has basis vectors $f_1,f_2$ as in Lemma \ref{lem:lie algebra subspace sum} (a). We get
\begin{equation}\label{eqn:L4A2}
[f_1,f_2]=-(b_2e_5+b_5e_6)
\end{equation}
and $[f_1,[f_1,f_2]]=b_2e_6$. Lemma \ref{lem:subalg} implies that $b_2=0$. A third basis vector is of the form
\begin{equation*} 
f_3=c_3e_3+c_4e_4+c_5e_5+c_6e_6.
\end{equation*}
There are two cases:
\begin{itemize}
\item $b_5\neq 0$ and $c_5=0$. Then equation \eqref{eqn:L4A2} shows that $e_6\in\mathcal{F}$. This results in a basis of the first form.
\item $b_5=0$ and $c_5=0$. Then the remaining two basis vectors are of the form
\begin{equation*}
f_2=b_3e_3+b_4e_4+b_6e_6,\quad f_3=c_3e_3+c_4e_4+c_6e_6.
\end{equation*}
If $b_3\neq 0$ and $c_3=0$ we get the bases in (b) and (c), depending on whether $c_4\neq 0$ or $c_4=0$. If $b_3=0$ and $c_3=0$ we get the basis in (d).
\end{itemize}
\end{proof}

\begin{prop} The Lie algebra $L_4\oplus A_2$ does not admit a bi-Lagrangian structure.
\end{prop}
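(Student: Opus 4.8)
The plan is to combine the two preceding lemmas and argue by contradiction. Suppose $(\omega,\mathcal{F},\mathcal{G})$ is a bi-Lagrangian structure on $L_4\oplus A_2$. By Lemma~\ref{lem:L4A2 symp} the form $\omega$ has the shape written there, and non-degeneracy forces $\omega_{16},\omega_{25},\omega_{34}$ all non-zero. The two features I would extract from this explicit form are: first, since $\alpha_{16}$ is the only summand of $\omega$ containing $\alpha_6$, we have $\omega(X,e_6)=\omega_{16}\,\alpha_1(X)$ for every $X\in\mathfrak{g}$; second, $\omega(e_3,e_4)=\omega_{34}$, while both $\omega(e_3,e_6)=0$ and $\omega(e_4,e_6)=0$ because $\omega$ has no $\alpha_{36}$ or $\alpha_{46}$ component.

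Next I would invoke Lemma~\ref{lem:sub L4A2}: since $\mathcal{F}$ and $\mathcal{G}$ are complementary $3$-dimensional subalgebras, one of them, say $\mathcal{F}$, has a basis $f_1,f_2,f_3$ of one of the four listed forms. It then remains to check, case by case, that such a subspace cannot be $\omega$-isotropic. In forms (a), (c) and (d) the third basis vector is $f_3=e_6$ and $f_1$ has $e_1$-coefficient $1$, so $\omega(f_1,f_3)=\omega_{16}\neq 0$. In form (b) we take $f_2=e_3+b_6e_6$ and $f_3=e_4+c_6e_6$; expanding bilinearly and using that $e_3,e_4,e_6$ pair trivially among the mixed $e_6$-terms gives $\omega(f_2,f_3)=\omega(e_3,e_4)=\omega_{34}\neq 0$. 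In every case $\mathcal{F}$ fails to be Lagrangian, contradicting the hypothesis, so no bi-Lagrangian structure exists.

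I do not expect a genuine obstacle here: once Lemmas~\ref{lem:L4A2 symp} and~\ref{lem:sub L4A2} are available, the proposition collapses to four essentially one-line pairings. The only mild point of care is in case (b), where one must pair $f_2$ with $f_3$ (rather than $f_1$ with something) so that the surviving term is precisely $\omega_{34}$; the $e_6$-corrections vanish exactly because $\omega$ lacks the components $\alpha_{36}$ and $\alpha_{46}$. One should also note that the argument uses only the conclusion of Lemma~\ref{lem:sub L4A2}, which already singles out the distinguished subalgebra, so the formal asymmetry between $\mathcal{F}$ and $\mathcal{G}$ causes no difficulty.
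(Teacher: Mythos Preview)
Your proposal is correct and follows exactly the same approach as the paper, which simply asserts that the subalgebra $\mathcal{F}$ of Lemma~\ref{lem:sub L4A2} cannot be Lagrangian for any symplectic form as in Lemma~\ref{lem:L4A2 symp}; you have merely filled in the four easy pairings that justify this assertion.
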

\begin{proof} For the symplectic form as in Lemma \ref{lem:L4A2 symp} the subalgebra $\mathcal{F}$ in Lemma \ref{lem:sub L4A2} cannot be Lagrangian.
\end{proof}

\subsubsection{$L_{6,13}$}

\begin{lem}\label{lem:613symp} Every closed $2$-form on $L_{6,13}$ is of the form
\begin{align*}
\omega&=\omega_{12}\alpha_{12}+\omega_{13}\alpha_{13}+\omega_{23}\alpha_{23}+\omega_{14}\alpha_{14}+\omega_{24}\alpha_{24}-\omega_{16}\alpha_{34}\\
&\quad+\omega_{15}\alpha_{15}-\omega_{26}\alpha_{45}+\omega_{16}\alpha_{16}+\omega_{26}\alpha_{26}.
\end{align*}
If $\omega$ is symplectic, then $\omega_{26}$ has to be nonzero.
\end{lem}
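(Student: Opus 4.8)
The plan is to prove both assertions by a direct computation with the Chevalley--Eilenberg differential, reading the structure equations $d\alpha_1,\dots,d\alpha_6$ of $L_{6,13}$ off Table~\ref{table:structure}. I would write a general $2$-form as $\omega=\sum_{1\le i<j\le 6}\omega_{ij}\alpha_{ij}$ (fifteen unknowns), compute $d\omega$ from $d(\alpha_i\wedge\alpha_j)=d\alpha_i\wedge\alpha_j-\alpha_i\wedge d\alpha_j$ and the differentials of the Lie algebra, and expand the result in the basis $\{\alpha_i\wedge\alpha_j\wedge\alpha_k:i<j<k\}$ of $3$-forms. Setting every coefficient to zero gives a homogeneous linear system in the $\omega_{ij}$, and solving it should produce exactly the relations $\omega_{25}=\omega_{35}=\omega_{36}=\omega_{46}=\omega_{56}=0$, $\omega_{34}=-\omega_{16}$ and $\omega_{45}=-\omega_{26}$, which leaves the displayed eight-parameter family of closed forms.

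For the non-degeneracy claim no further use of the structure equations is needed; it comes out of the shape of the closed form alone. I would group a closed $2$-form as
\begin{equation*}
\omega=P+Q\wedge\alpha_5+R\wedge\alpha_6,
\end{equation*}
where $P=\omega_{12}\alpha_{12}+\omega_{13}\alpha_{13}+\omega_{23}\alpha_{23}+\omega_{14}\alpha_{14}+\omega_{24}\alpha_{24}-\omega_{16}\alpha_{34}$ and $Q=\omega_{15}\alpha_1-\omega_{26}\alpha_4$, $R=\omega_{16}\alpha_1+\omega_{26}\alpha_2$ involve only $\alpha_1,\dots,\alpha_4$. In $\omega\wedge\omega\wedge\omega$ the only summands containing both $\alpha_5$ and $\alpha_6$ are those with exactly one factor $P$, one factor $Q\wedge\alpha_5$ and one factor $R\wedge\alpha_6$ (two factors of $Q\wedge\alpha_5$, or of $R\wedge\alpha_6$, give zero), and since all three pieces have even degree they commute, so collecting the $3!$ orderings yields
\begin{equation*}
\omega^3=-6\,P\wedge Q\wedge R\wedge\alpha_5\wedge\alpha_6 .
\end{equation*}
Now $Q\wedge R=\omega_{26}\bigl(\omega_{15}\alpha_{12}+\omega_{16}\alpha_{14}+\omega_{26}\alpha_{24}\bigr)$ has $\omega_{26}$ as an overall factor, hence so does $\omega^3$; carrying out the remaining wedge product one gets
\begin{equation*}
\omega^3=6\,\omega_{26}\bigl(\omega_{13}\omega_{26}+\omega_{16}(\omega_{15}-\omega_{23})\bigr)\,\alpha_1\wedge\alpha_2\wedge\alpha_3\wedge\alpha_4\wedge\alpha_5\wedge\alpha_6 .
\end{equation*}
Thus $\omega^3$ vanishes as soon as $\omega_{26}=0$, so any symplectic form must have $\omega_{26}\ne 0$, which is the assertion. (The full non-degeneracy condition is $\omega_{26}\ne 0$ together with $\omega_{13}\omega_{26}+\omega_{16}(\omega_{15}-\omega_{23})\ne 0$, but only the first is claimed here and needed in the sequel.)

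I expect the determination of the space of closed $2$-forms to be the only step requiring genuine effort, and even there the work is purely mechanical: a moderately large but routine linear-algebra computation whose one pitfall is keeping the signs straight when reordering wedge products. The non-degeneracy half is almost automatic once one notices how rigid the $\alpha_5$- and $\alpha_6$-parts of a closed form are --- the $\alpha_6$-part being $(\omega_{16}\alpha_1+\omega_{26}\alpha_2)\wedge\alpha_6$ and the $\alpha_5$-part $(\omega_{15}\alpha_1-\omega_{26}\alpha_4)\wedge\alpha_5$ --- which forces $\omega^3$ to be divisible by $\omega_{26}$.
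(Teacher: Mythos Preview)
Your argument is correct. The paper states this lemma without proof, treating both parts as routine computations; your direct calculation with the Chevalley--Eilenberg differential and the subsequent factorization of $\omega^3$ is exactly the natural way to fill in the details, and your explicit non-degeneracy condition $\omega_{26}\bigl(\omega_{13}\omega_{26}+\omega_{16}(\omega_{15}-\omega_{23})\bigr)\neq 0$ is a bonus beyond what the lemma asserts.
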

\begin{lem}\label{lem:sub L613} Let $\mathcal{F}$ be a $3$-dimensional subalgebra of $L_{6,13}$ with a basis vector having a non-zero $e_1$-component. Then $\mathcal{F}$ has a basis of either one of the following two forms:
\begin{enumerate}
\item $e_1+a_2e_2+a_4e_4+a_5e_5,\quad e_3+b_5e_5,\quad e_6$
\item $e_1+a_2e_2+a_3e_3+a_4e_4,\quad e_5,\quad e_6$.
\end{enumerate}
\end{lem}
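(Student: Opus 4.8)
The plan is to imitate the proof of Lemma~\ref{lem:sub L4A2}, now using the brackets of $L_{6,13}$. Since $\mathcal F$ contains a vector with non-zero $e_1$-component, Lemma~\ref{lem:lie algebra subspace sum}(a) gives a basis
\begin{equation*}
\textstyle f_1=e_1+\sum_{i=2}^{6}a_ie_i,\qquad f_2=\sum_{j=2}^{6}b_je_j,\qquad f_3=\sum_{k=2}^{6}c_ke_k .
\end{equation*}
Apart from the explicit structure constants, the only input is Lemma~\ref{lem:subalg}: a $3$-dimensional subalgebra of a nilpotent Lie algebra is itself nilpotent and so satisfies $[\mathcal F,[\mathcal F,\mathcal F]]=0$; in particular $[f_1,[f_1,f_2]]=0$ and $[f_1,[f_1,f_3]]=0$.

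From Table~\ref{table:structure} one reads that in $L_{6,13}$ the operator $\mathrm{ad}_{e_1}$ kills $e_1$ and $e_3$ and acts on $\langle e_2,e_4,e_5,e_6\rangle$ as a single nilpotent Jordan block $e_2\mapsto e_4\mapsto e_5\mapsto e_6\mapsto 0$. Hence $[f_1,f_2]$ has the form $b_2e_4+b_4e_5+(\ast)e_6$, so $[f_1,[f_1,f_2]]=b_2e_5+b_4e_6$, and the vanishing of this bracket forces $b_2=b_4=0$; likewise $c_2=c_4=0$. Thus $f_2,f_3$ both lie in $\langle e_3,e_5,e_6\rangle$, on which all brackets vanish, so $[f_2,f_3]=0$ automatically, and (using $b_2=b_4=0$) both $[f_1,f_2]$ and $[f_1,f_3]$ are scalar multiples of $e_6$. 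Next I would check $e_6\in\mathcal F$: if one of these two brackets is non-zero this is clear, since it lies in $\mathcal F$; and if both vanish, the resulting relations on the $e_6$-coefficients force $f_2$ and $f_3$ into a common $2$-dimensional subspace of the shape $\langle e_3-a_2e_5,\,e_6\rangle$, which they then span, so again $e_6\in\mathcal F$. Taking $f_3=e_6$ and subtracting suitable multiples of $e_6$ from $f_1$ and $f_2$, we may assume $f_1=e_1+a_2e_2+a_3e_3+a_4e_4+a_5e_5$ and $f_2=b_3e_3+b_5e_5$. Finally split into cases: if $b_3\ne 0$, rescale $f_2$ to $e_3+b_5e_5$ and subtract $a_3f_2$ from $f_1$ to clear its $e_3$-component, landing in (a); if $b_3=0$ then $b_5\ne 0$ (otherwise $f_2=0$), so $f_2$ rescales to $e_5$ and subtracting $a_5f_2$ from $f_1$ clears its $e_5$-component, landing in (b).

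I do not expect a genuine obstacle here: once the Jordan structure of $\mathrm{ad}_{e_1}$ is noted, the coefficients collapse in one step, and the argument that $e_6\in\mathcal F$ is short. The only point requiring attention is the order of the successive changes of basis inside $\mathcal F$ — first remove the $e_6$-components using $f_3=e_6$, then rescale and normalize $f_2$, then clean up $f_1$ — so that a term eliminated at one stage is not reintroduced at the next. No symplectic form is needed for this lemma; it is used only in the ensuing non-existence statement, through Lemma~\ref{lem:613symp} together with Lemma~\ref{lem:lie algebra subspace sum}(b).
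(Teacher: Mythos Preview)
Your proof is correct and follows essentially the same approach as the paper: both use $[f_1,[f_1,f_2]]=b_2e_5+b_4e_6$ together with Lemma~\ref{lem:subalg} to force $b_2=b_4=0$, then show $e_6\in\mathcal{F}$, and finally normalize to one of the two stated bases. Your organization is slightly more streamlined in that you apply the nilpotency constraint to $f_2$ and $f_3$ symmetrically from the outset, whereas the paper treats them sequentially via a case split on $b_3$; this lets you avoid the paper's small dimension-count contradiction in the $b_3\neq 0$ branch, but the underlying argument is the same.
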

\begin{proof} Suppose that $\mathcal{F}$ has basis vectors $f_1,f_2$ as in Lemma \ref{lem:lie algebra subspace sum} (a). We get
\begin{equation}\label{eqn:L613}
[f_1,f_2]=-(b_2e_4+b_4e_5+(b_5+a_2b_3-a_3b_2)e_6)
\end{equation}
and $[f_1,[f_1,f_2]]=b_2e_5+b_4e_6$. Lemma \ref{lem:subalg} implies that $b_2=b_4=0$. There are two cases:
\begin{itemize}
\item $b_3=0$. Then either $b_5=0$, hence $b_6\neq 0$ and $e_6\in\mathcal{F}$. This results in a basis of type (a) or (b). Or $b_5\neq 0$ and equation \eqref{eqn:L613} shows that $e_6\in\mathcal{F}$. This results in a basis of type (b).
\item $b_3\neq 0$. A third basis vector of $\mathcal{F}$ is then of the form
\begin{equation*}
f_3=c_5e_5+c_6e_6.
\end{equation*}
If $c_5=0$, then $c_6\neq 0$ and $e_6\in\mathcal{F}$. This results in a basis of type (a). If $c_5\neq 0$, then the equation $[f_1,f_3]=-c_5e_6$ again shows that $e_6\in\mathcal{F}$. This case cannot occur, because then $\mathcal{F}$ had to be at least $4$-dimensional.
\end{itemize}
\end{proof}
\begin{prop} The Lie algebra $L_{6,13}$ does not admit a bi-Lagrangian structure.
\end{prop}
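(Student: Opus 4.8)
The plan is to combine the structural description of $3$-dimensional subalgebras from Lemma \ref{lem:sub L613} with the explicit form of the symplectic form from Lemma \ref{lem:613symp}, exactly in the style of the preceding non-existence proofs. Suppose for contradiction that $(\omega,\mathcal{F},\mathcal{G})$ is a bi-Lagrangian structure on $L_{6,13}$. By Lemma \ref{lem:lie algebra subspace sum}(b) at least one of $\mathcal{F},\mathcal{G}$ — say $\mathcal{F}$ — has a basis vector with non-zero $e_1$-component, so Lemma \ref{lem:sub L613} applies and $\mathcal{F}$ has a basis of type (a) or (b). The key point is that by Lemma \ref{lem:613symp} any symplectic form has $\omega_{26}\neq 0$, so that $e_2$ and $e_6$ are ``paired'' by $\omega$; I would show that in either case the spanning vectors of $\mathcal{F}$ cannot simultaneously be $\omega$-orthogonal.

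Concretely, in case (b) the basis is $f_1=e_1+a_2e_2+a_3e_3+a_4e_4$, $f_2=e_5$, $f_3=e_6$. Then $\omega(f_1,f_3)=\omega(e_1,e_6)+a_2\omega(e_2,e_6)=\omega_{16}+a_2\omega_{26}$, which can be made zero, but one should also check $\omega(f_1,f_2)=\omega(e_1,e_5)=\omega_{15}$ and $\omega(f_2,f_3)=\omega(e_5,e_6)$; from Lemma \ref{lem:613symp} the latter coefficient is $0$ (there is no $\alpha_{56}$ term), so the obstruction must come from elsewhere — presumably $\omega_{15}$ need not vanish, or more robustly one uses the complementary subalgebra $\mathcal{G}$ together with $\omega_{26}\neq 0$ to force a contradiction with the direct sum decomposition. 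In case (a) the basis is $f_1=e_1+a_2e_2+a_4e_4+a_5e_5$, $f_2=e_3+b_5e_5$, $f_3=e_6$; here $\omega(f_2,f_3)=\omega(e_3,e_6)+b_5\omega(e_5,e_6)=0$ by Lemma \ref{lem:613symp}, and $\omega(f_1,f_3)=\omega_{16}+a_2\omega_{26}$ again adjustable, so the genuine constraint is $\omega(f_1,f_2)=\omega(e_1,e_3)+a_4\omega(e_1,e_4\text{-type terms})+\ldots$; I would expand this using $\omega_{13},\omega_{23},\ldots$ and the fact that the $\alpha_{34}$-coefficient equals $-\omega_{16}$ etc., and argue it cannot vanish together with the others once $\omega_{26}\neq 0$.

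The main obstacle I anticipate is that a single case-(a)/(b) subalgebra $\mathcal{F}$ might admit a Lagrangian member, so the contradiction may need to use \emph{both} complementary subalgebras: if $\mathcal{F}$ is of type (a) or (b), one restricts the possible forms of the complementary $\mathcal{G}$ (either directly, or by noting $\mathcal{G}$ too must avoid certain $e_i$-components to be complementary to $\mathcal{F}$), and then checks that no symplectic $\omega$ with $\omega_{26}\neq 0$ can be Lagrangian on both. The bookkeeping — tracking which $\omega_{ij}$ are forced to vanish by the Lagrangian conditions on $\mathcal{F}$, and showing this is incompatible with non-degeneracy (equivalently with $\omega_{26}\neq 0$) — is the part that requires care but is otherwise routine linear algebra, entirely parallel to the $L_4\oplus A_2$ and $L_4$ arguments already given. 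I would therefore structure the proof as: (i) invoke Lemma \ref{lem:sub L613} to reduce to the two forms of $\mathcal{F}$; (ii) in each form, write out the three Lagrangian equations $\omega(f_i,f_j)=0$ using Lemma \ref{lem:613symp}; (iii) observe these force $\omega_{26}=0$, contradicting symplecticity. If step (iii) fails to close directly, bring in $\mathcal{G}$ as above.
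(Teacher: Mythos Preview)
Your step (iii) does not close: the Lagrangian conditions on $\mathcal{F}$ alone do \emph{not} force $\omega_{26}=0$. Concretely, for the symplectic form $\omega=\alpha_{13}+\alpha_{26}-\alpha_{45}$ from Table~\ref{table:symp}, the case-(a) subalgebra $\mathcal{F}=\mathrm{span}\{e_1+e_4,\,e_3+e_5,\,e_6\}$ is Lagrangian (check: $\omega(f_1,f_2)=\omega_{13}-\omega_{26}\cdot 1\cdot 1=0$, $\omega(f_1,f_3)=\omega_{16}=0$, $\omega(f_2,f_3)=0$). So the obstruction must come from $\mathcal{G}$, as you anticipate in your fallback.

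The genuine gap is that your fallback only invokes \emph{linear} constraints on $\mathcal{G}$ (complementarity and the Lagrangian equations), never the \emph{subalgebra} condition, and this is exactly what the paper's argument hinges on. In case (b), once $\mathcal{F}$ contains $e_5,e_6$, complementarity forces $\mathcal{G}$ to contain vectors $g_1=e_2+\cdots$ and $g_2=e_3+\cdots$; then $[g_1,g_2]=-e_6\in\mathcal{F}$, so $\mathcal{G}$ is not closed under the bracket, and the contradiction is obtained without ever touching $\omega$. In case (a), complementarity gives $\mathcal{G}$ a basis with $g_2=e_4+y_3e_3+\cdots$ and $g_3=z_3e_3+z_5e_5+z_6e_6$; computing $[g_1,g_2]=-y_3e_6$ and $[g_1,g_3]=-z_3e_6$ and using $e_6\in\mathcal{F}$ forces $y_3=z_3=0$ and hence $z_5\neq 0$, after which $\omega(g_2,g_3)=\omega(e_4,e_5)=-\omega_{26}\neq 0$ finishes it. Your analogy with $L_4$ and $L_4\oplus A_2$ is therefore misleading: those cases are settled by showing $\mathcal{F}$ itself cannot be Lagrangian, whereas here $\mathcal{F}$ can be, and the obstruction lives in the bracket structure of $\mathcal{G}$.
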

\begin{proof}
Suppose that $\mathcal{F},\mathcal{G}$ is a bi-Lagrangian structure. Considering the bases in Lemma \ref{lem:sub L613}, it follows that exactly one of $\mathcal{F},\mathcal{G}$ must have a basis with a non-zero $e_1$-component, say $\mathcal{F}$. Otherwise $\mathcal{F}$ and $\mathcal{G}$ cannot be complementary. 

In the first case $\mathcal{F}$ has a basis of the form
\begin{equation*}
e_1+a_2e_2+a_4e_4+a_5e_5,\quad e_3+b_5e_5,\quad e_6.
\end{equation*}
It follows that $\mathcal{G}$ has a basis of the form
\begin{align*}
g_1&=e_2+x_3e_3+x_5e_5+x_6e_6\\
g_2&=e_4+y_3e_3+x_5e_5+y_6e_6\\
g_3&=z_3e_3+z_5e_5+z_6e_6.
\end{align*}
We have $[g_1,g_2]=-y_3e_6$. This implies that $y_3=0$, since otherwise $[g_1,g_2]\in\mathcal{F}$. Similarly $[g_1,g_3]=-z_3e_6$, hence $z_3=0$. Then also $z_5\neq 0$, since otherwise $g_3$ is a multiple of $e_6\in\mathcal{F}$.

The basis is now
\begin{equation*}
g_1=e_2+x_3e_3+x_6e_6,\quad g_2=e_4+y_6e_6,\quad g_3=e_5+z_6e_6.
\end{equation*}
Considering the symplectic form $\omega$ as in Lemma \ref{lem:613symp} and the fact that $\omega_{26}\neq 0$, it follows that $\omega(g_2,g_3)\neq 0$, hence $\mathcal{G}$ cannot be Lagrangian. This is a contradiction.

In the second case $\mathcal{F}$ has a basis of the form
\begin{equation*}
e_1+a_2e_2+a_3e_3+a_4e_4,\quad e_5,\quad e_6.
\end{equation*}
It follows that $\mathcal{G}$ has a basis of the form
\begin{equation*}
g_1=e_2+x_5e_5+x_6e_6,\quad g_2=e_3+y_5e_5+y_6e_6,\quad g_3=e_4+z_5e_5+z_6e_6.
\end{equation*}
We have $[g_1,g_2]=-e_6\in\mathcal{F}$. This is a contradiction.
\end{proof}

\subsubsection{$L_{5,6}\oplus A_1$}

\begin{lem}\label{lem:sub L561} Let $\mathcal{F}$ be a $3$-dimensional subalgebra of $L_{5,6}\oplus A_1$ with a basis vector having a non-zero $e_1$-component. Then $\mathcal{F}$ has a basis of either one of the following two forms:
\begin{enumerate}
\item $e_1+a_2e_2+a_3e_3+a_4e_4,\quad e_5+b_3e_3,\quad e_6$
\item $e_1+a_2e_2+a_4e_4+a_5e_5,\quad e_3,\quad e_6$
\end{enumerate}
\end{lem}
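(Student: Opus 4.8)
The plan is to run the same argument as in the proofs of Lemmas \ref{lem:sub L4A2} and \ref{lem:sub L613}. Since $\mathcal{F}$ has a basis vector with a non-zero $e_1$-component, Lemma \ref{lem:lie algebra subspace sum}(a) gives a basis of $\mathcal{F}$ of the form
\begin{equation*}
\textstyle f_1=e_1+\sum_{i=2}^6 a_ie_i,\qquad f_2=\sum_{j=2}^6 b_je_j,\qquad f_3=\sum_{k=2}^6 c_ke_k .
\end{equation*}
Because $e_6$ spans the abelian summand $A_1$ it is central in $L_{5,6}\oplus A_1$, so only the $e_1,\dots,e_5$-components of the $f_i$ enter the bracket computations.

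First I would compute $[f_1,f_2]$ and then $[f_1,[f_1,f_2]]$ using the structure constants of $L_{5,6}$ from Table \ref{table:structure}. Since $\mathcal{F}$ is a $3$-dimensional nilpotent Lie algebra, Lemma \ref{lem:subalg} gives $[\mathcal{F},[\mathcal{F},\mathcal{F}]]=0$ and $\dim[\mathcal{F},\mathcal{F}]\le 1$; in particular $[f_1,[f_1,f_2]]=0$. This second-order condition forces several of the coefficients $b_j$ (and, by the same computation applied to $f_3$, several of the $c_k$) to vanish, which cuts $f_2$ and $f_3$ down to a short list of possible shapes. I would then split into cases according to which of the remaining coefficients are non-zero, exactly as in the $L_{6,13}$ proof, using at the branching points also the rank bound $\dim[\mathcal{F},\mathcal{F}]\le 1$.

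In each branch I would apply the subalgebra condition $[\mathcal{F},\mathcal{F}]\subseteq\mathcal{F}$ to the surviving brackets. Typically this either pins an additional vector into $\mathcal{F}$ — ultimately $e_6$, together with $e_3$ in case (b) or with the combination $e_5+b_3e_3$ in case (a) — or it produces a contradiction, the usual one being that $\mathcal{F}$ would then have to be at least $4$-dimensional. Once these extra vectors are known to lie in $\mathcal{F}$, the freedom to add multiples of $f_2,f_3$ to $f_1$ lets me clear the remaining components of $f_1$ and arrive at one of the normal forms (a), (b).

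I expect the main obstacle to be purely organizational: keeping the case distinction exhaustive, and checking in each surviving case that the clean-up of $f_1$ really lands on (a) or (b) rather than on some spurious form with leftover components — in particular, that the $e_6$-components can always be absorbed once $e_6\in\mathcal{F}$ has been established. A secondary subtlety, compared with the $4$-dimensional situation, is that the nilpotency constraint must be invoked at second order, via $[f_1,[f_1,f_2]]=0$, rather than merely at first order.
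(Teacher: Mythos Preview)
Your plan is essentially the paper's own argument: compute $[f_1,f_2]$ and $[f_1,[f_1,f_2]]$, use Lemma~\ref{lem:subalg} to force $b_2=b_4=0$ (and likewise $c_2=c_4=0$), and then split into the cases $b_5\neq 0$ versus $b_5=c_5=0$, which lead to forms (a) and (b) respectively after absorbing $e_6$ and cleaning up $f_1$.

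One small correction: in $L_{5,6}\oplus A_1$ it is $e_3$, not $e_6$, that spans the abelian summand $A_1$ (check Table~\ref{table:structure}: $d\alpha_3=0$ and $e_3$ appears in no bracket, whereas $e_6$ lies in the derived algebra via $[e_1,e_5]$). Both $e_3$ and $e_6$ are central, so your conclusion that the $e_6$-components drop out of brackets is still valid, but you should also note that the $e_3$-components drop out; otherwise your computation of $[f_1,f_2]$ will carry spurious $b_3$-terms.
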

\begin{proof}
Suppose that $\mathcal{F}$ has basis vectors $f_1,f_2$ as in Lemma \ref{lem:lie algebra subspace sum} (a). We get
\begin{align}
[f_1,f_2]&=-(b_2e_4+b_4e_5+(b_5+a_2b_4-a_4b_2)e_6)\label{eqn:L561}\\
[f_1,[f_1,f_2]]&=b_2e_5+b_4e_6+a_2b_2e_6.\nonumber
\end{align}
Lemma \ref{lem:subalg} implies that $b_2=b_4=0$. A third basis vector is of the form
\begin{equation*}
f_3=c_3e_3+c_5e_5+c_6e_6.
\end{equation*} 
There are two cases:
\begin{enumerate}
\item $b_5\neq 0$ and $c_5=0$. Then equation \eqref{eqn:L561} implies that $e_6\in\mathcal{F}$. This results in a basis of type (a).
\item $b_5=0$ and $c_5=0$. This results in a basis of type (b).
\end{enumerate}
\end{proof}

\begin{prop}
The Lie algebra $L_{5,6}\oplus A_1$ does not have two complementary $3$-dimensional subalgebras $\mathcal{F},\mathcal{G}$. In particular, $L_{5,6}\oplus A_1$ does not admit a bi-Lagrangian structure.
\end{prop}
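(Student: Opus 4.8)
The plan is to use the two lemmas just proved to reduce one of the two subalgebras to one of two explicit shapes (both containing $e_6$), and then to exploit that, once the $e_1$-direction is removed, the bracket of $L_{5,6}\oplus A_1$ can only ever land in $\langle e_6\rangle$. So suppose $\mathfrak g=L_{5,6}\oplus A_1=\mathcal F\oplus\mathcal G$ is a vector space decomposition into two $3$-dimensional subalgebras. By Lemma~\ref{lem:lie algebra subspace sum}(b) at least one of them, say $\mathcal F$, has a basis with a non-zero $e_1$-component, so Lemma~\ref{lem:sub L561} applies and $\mathcal F$ has a basis of type (a) or (b); in both the third basis vector is $e_6$, hence $e_6\in\mathcal F$. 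If $\mathcal G$ also had a basis vector with a non-zero $e_1$-component, Lemma~\ref{lem:sub L561} applied to $\mathcal G$ would force $e_6\in\mathcal G$ as well, contradicting $\mathcal F\cap\mathcal G=0$; so $\mathcal G\subset\langle e_2,\dots,e_6\rangle$, and since $e_6\in\mathcal F$ also $e_6\notin\mathcal G$.

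The key step is a bracket count. From the structure constants of $L_{5,6}\oplus A_1$ — which are implicit in formula \eqref{eqn:L561} in the proof of Lemma~\ref{lem:sub L561} — the only non-vanishing bracket among $e_2,\dots,e_6$ is $[e_2,e_4]=-e_6$. Hence $[\mathcal G,\mathcal G]\subset\langle e_6\rangle\subset\mathcal F$, and as $\mathcal G$ is a subalgebra this gives $[\mathcal G,\mathcal G]\subset\mathcal F\cap\mathcal G=0$, i.e.\ $\mathcal G$ is abelian. Writing an element of $\langle e_2,\dots,e_6\rangle$ as $x\,e_2+y\,e_4+v$ with $v\in\langle e_3,e_5,e_6\rangle$, the bracket of two such elements equals $-(xy'-x'y)\,e_6$, so abelianness of $\mathcal G$ says precisely that the image of $\mathcal G$ under the projection $\langle e_2,\dots,e_6\rangle\to\langle e_2,e_4\rangle$ is isotropic for the standard symplectic form on $\langle e_2,e_4\rangle$, hence at most $1$-dimensional. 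Therefore $K:=\mathcal G\cap\langle e_3,e_5,e_6\rangle$ satisfies $\dim K\ge 2$, and since $e_6\notin\mathcal G$ and $\langle e_3,e_5,e_6\rangle$ is $3$-dimensional we conclude $\dim K=2$ with $e_6\notin K$.

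To finish, intersect $\mathcal F$ with the same space $\langle e_3,e_5,e_6\rangle$: using the explicit basis of $\mathcal F$ one gets $\mathcal F\cap\langle e_3,e_5,e_6\rangle=\langle e_5+b_3e_3,\,e_6\rangle$ in case (a) and $\langle e_3,\,e_6\rangle$ in case (b), a $2$-dimensional subspace in either case. But two $2$-dimensional subspaces of the $3$-dimensional space $\langle e_3,e_5,e_6\rangle$ must meet non-trivially, so $\mathcal F\cap\mathcal G\ne 0$ — a contradiction. This shows $L_{5,6}\oplus A_1$ has no pair of complementary $3$-dimensional subalgebras, and the ``in particular'' statement follows since the two Lagrangian foliations of a bi-Lagrangian structure on a $6$-dimensional Lie algebra would form exactly such a pair. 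The step I expect to need the most care is the bracket count together with the dimension bookkeeping that upgrades it to ``$\dim K=2$'': one has to be sure that $\mathcal G\subset\langle e_2,\dots,e_6\rangle$ (which is where $e_6\in\mathcal F$ and Lemma~\ref{lem:sub L561} are used) and that $e_6\notin\mathcal G$; once these are secured the closing pigeonhole argument is immediate and identical in the two cases.
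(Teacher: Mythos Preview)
Your proof is correct and takes a genuinely different route from the paper's. The paper argues case by case: after normalising $\mathcal F$ to type~(a) or~(b) via Lemma~\ref{lem:sub L561}, it writes down an explicit basis for the complementary subspace $\mathcal G$ (which must contain vectors with $e_2$-component~$1$ and $e_4$-component~$1$ respectively) and simply computes that the bracket of these two basis vectors equals $-e_6\in\mathcal F$, contradicting closure of $\mathcal G$.

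Your argument avoids writing down a basis for $\mathcal G$ altogether. From the same bracket observation $[\mathcal G,\mathcal G]\subset\langle e_6\rangle\subset\mathcal F$ you deduce that $\mathcal G$ is abelian, translate abelianness into an isotropy condition on the projection to $\langle e_2,e_4\rangle$, and then finish with a clean pigeonhole in the $3$-dimensional space $\langle e_3,e_5,e_6\rangle$. This is slightly longer but more conceptual: it treats cases~(a) and~(b) uniformly at the end, and it makes transparent \emph{why} the obstruction lives in $\langle e_3,e_5,e_6\rangle$ --- namely, both subalgebras are forced to occupy two of its three dimensions. The paper's approach, by contrast, is quicker and more hands-on, needing only a single bracket computation in each case once the complementary basis is written out.
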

\begin{proof}
Suppose that $\mathcal{F},\mathcal{G}$ are two complementary $3$-dimensional subalgebras of $L_{5,6}\oplus A_1$. Considering the bases in Lemma \ref{lem:sub L561}, it follows that exactly one of $\mathcal{F},\mathcal{G}$ must have a basis with a non-zero $e_1$-component, say $\mathcal{F}$. Otherwise $\mathcal{F}$ and $\mathcal{G}$ cannot be complementary. 

In the first case $\mathcal{F}$ has a basis of the form
\begin{equation*}
e_1+a_2e_2+a_3e_3+a_4e_4,\quad e_5+b_3e_3,\quad e_6.
\end{equation*}
It follows that $\mathcal{G}$ has a basis of the form
\begin{align*}
g_1&=e_2+x_3e_3+x_5e_5+x_6e_6\\
g_2&=z_3e_3+y_5e_5+y_6e_6\\
g_3&=e_4+x_3e_3+z_5e_5+z_6e_6.
\end{align*}
Then $[g_1,g_3]=-e_6\in\mathcal{F}$. This is a contradiction.

In the second case $\mathcal{F}$ has a basis of the form
\begin{equation*}
e_1+a_2e_2+a_4e_4+a_5e_5,\quad e_3,\quad e_6.
\end{equation*}
It follows that $\mathcal{G}$ has a basis of the form
\begin{equation*}
g_1=e_2+x_3e_3+x_6e_6,\quad g_2=e_4+y_3e_3+y_6e_6,\quad g_3=e_5+z_3e_3+z_6e_6.
\end{equation*}
Then $[g_1,g_2]=-e_6\in\mathcal{F}$. This is a contradiction. 
\end{proof}

\subsubsection{$L_{6,14}$}

\begin{lem}\label{lem:614symp} Every closed $2$-form on $L_{6,14}$ is of the form
\begin{align*}
\omega&=\omega_{12}\alpha_{12}+\omega_{13}\alpha_{13}+\omega_{23}\alpha_{23}+\omega_{14}\alpha_{14}+\omega_{24}\alpha_{24}-\omega_{16}\alpha_{34}\\
&\quad+\omega_{15}\alpha_{15}+\omega_{16}\alpha_{25}-\omega_{26}\alpha_{45}+\omega_{16}\alpha_{16}+\omega_{26}\alpha_{26}.
\end{align*}
If $\omega$ is symplectic, then at least one of $\omega_{16},\omega_{26}$ has to be non-zero.
\end{lem}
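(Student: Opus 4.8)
The proof is computational, so my plan is to reduce both assertions to linear algebra. First I would take a general $2$-form $\omega=\sum_{1\le i<j\le 6}\omega_{ij}\alpha_{ij}$ and compute its differential from the structure equations of $L_{6,14}$ recorded in Table \ref{table:structure}, using $d\omega=\sum_{i<j}\omega_{ij}\bigl(d\alpha_i\wedge\alpha_j-\alpha_i\wedge d\alpha_j\bigr)$. Expanding the resulting $3$-form in the basis $\{\alpha_i\wedge\alpha_j\wedge\alpha_k\}_{i<j<k}$ of $\Lambda^3\mathfrak{g}^*$ and equating each of the $20$ coefficients to zero yields a homogeneous linear system in the $15$ unknowns $\omega_{ij}$. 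Solving it should leave the eight free parameters $\omega_{12},\omega_{13},\omega_{23},\omega_{14},\omega_{24},\omega_{15},\omega_{16},\omega_{26}$, impose the relations $\omega_{34}=-\omega_{16}$, $\omega_{25}=\omega_{16}$, $\omega_{45}=-\omega_{26}$, and force the remaining coefficients $\omega_{35},\omega_{36},\omega_{46},\omega_{56}$ to vanish, which is exactly the stated normal form. This is almost the same computation as the one behind Lemma \ref{lem:613symp}; the only new feature of $L_{6,14}$ is the extra term $\omega_{16}\alpha_{25}$.

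For the symplectic statement I would simply inspect the normal form. The only summands of $\omega$ involving the index $6$ are $\omega_{16}\alpha_{16}+\omega_{26}\alpha_{26}$, so $i_{e_6}\omega=-\omega_{16}\alpha_1-\omega_{26}\alpha_2$. Hence if $\omega_{16}=\omega_{26}=0$ then $e_6$ lies in the kernel of $\omega$, so $\omega$ is degenerate (equivalently $\omega^3=0$ in $\Lambda^6\mathfrak{g}^*$). Therefore at least one of $\omega_{16},\omega_{26}$ must be non-zero whenever $\omega$ is symplectic, which is all the lemma claims; it does not assert the converse, and indeed $\omega=\alpha_{26}-\alpha_{45}$ (the case $\omega_{26}=1$, all other parameters zero) is a closed but degenerate $2$-form, so the converse fails.

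The only real work is the bookkeeping in the first step: reading off the $d\alpha_i$ for $L_{6,14}$, performing the wedge products, collecting the twenty coefficients, and row-reducing the system. This is routine but error-prone, so I would double-check the candidate normal form directly by verifying $d\omega=0$ term by term against the structure equations. No conceptual obstacle is expected.
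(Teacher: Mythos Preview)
Your proposal is correct and matches the paper's (implicit) approach: the paper states this lemma without proof, treating both the classification of closed $2$-forms and the non-degeneracy condition as routine computations, which is exactly the linear-algebra calculation you outline. Your observation that $i_{e_6}\omega=-\omega_{16}\alpha_1-\omega_{26}\alpha_2$ is the cleanest way to see the symplectic claim, and your remark that the converse fails is a nice addition not mentioned in the paper.
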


\begin{lem}\label{lem:sub L614} Let $\mathcal{F}$ be a $3$-dimensional subalgebra of $L_{6,14}$ with a basis vector having a non-zero $e_1$-component. Then $\mathcal{F}$ has a basis of either one of the following two forms:
\begin{enumerate}
\item $e_1+a_2e_2+a_4e_4+a_5e_5,\quad e_3+b_5e_5,\quad e_6$
\item $e_1+a_2e_2+a_3e_3+a_4e_4,\quad e_5,\quad e_6$.
\end{enumerate}
If $\mathcal{F}$ is Lagrangian, then $\omega_{26}\neq 0$ for the symplectic form in Lemma \ref{lem:614symp}.
\end{lem}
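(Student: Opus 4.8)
The proof will run along the same lines as the proofs of Lemmas~\ref{lem:sub L613} and~\ref{lem:sub L561}. Since $\mathcal{F}$ contains a vector with non-zero $e_1$-component, Lemma~\ref{lem:lie algebra subspace sum}(a) gives a basis $f_1=e_1+\sum_{i=2}^6a_ie_i$, $f_2=\sum_{j=2}^6b_je_j$, $f_3=\sum_{k=2}^6c_ke_k$ of $\mathcal{F}$. First I would use the structure equations of $L_{6,14}$ to write out $[f_1,f_2]$, which lands in $\mathrm{span}(e_4,e_5,e_6)$ with coefficients affine in the $a_i,b_j$, together with the iterated bracket $[f_1,[f_1,f_2]]\in\mathrm{span}(e_5,e_6)$. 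Because $\mathcal{F}$ is a $3$-dimensional nilpotent subalgebra, Lemma~\ref{lem:subalg} forces $[\mathcal{F},[\mathcal{F},\mathcal{F}]]=0$ and $\dim[\mathcal{F},\mathcal{F}]\leq1$; applying the first of these to $[f_1,[f_1,f_2]]$ makes two of the coefficients $b_j$ of $f_2$ vanish, just as $b_2=b_4=0$ in the proof of Lemma~\ref{lem:sub L613}.

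After this reduction $[f_1,f_2]$ is a multiple of $e_6$, and one then splits into cases according to whether the coefficient of $e_3$ in $f_2$ vanishes, analysing in the non-zero case the possible third basis vector $f_3\in\mathrm{span}(e_5,e_6)$ and the bracket $[f_1,f_3]$. In every admissible sub-case a bracket of basis vectors produces a non-zero $e_6$-component, so $e_6\in\mathcal{F}$, and a change of basis inside $\mathcal{F}$ then yields precisely the two forms (a) and (b). The remaining sub-case --- in which $f_3$ would still have a non-zero $e_5$-component after $e_6\in\mathcal{F}$ --- is impossible, since it would make $\mathcal{F}$ at least $4$-dimensional, exactly as in the proof of Lemma~\ref{lem:sub L613}.

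For the final claim, note that in both forms (a) and (b) one has $e_6\in\mathcal{F}$ and the $e_1$-vector is $f_1=e_1+a_2e_2+(\text{a combination of }e_3,e_4,e_5)$. Since the products $\alpha_{36}$, $\alpha_{46}$, $\alpha_{56}$ do not occur in the symplectic form $\omega$ of Lemma~\ref{lem:614symp}, one computes $\omega(f_1,e_6)=\omega_{16}+a_2\omega_{26}$. If $\mathcal{F}$ is Lagrangian this must vanish, giving $\omega_{16}=-a_2\omega_{26}$; hence if $\omega_{26}$ were zero we would get $\omega_{16}=0$ as well, contradicting the non-degeneracy condition from Lemma~\ref{lem:614symp} that at least one of $\omega_{16},\omega_{26}$ be non-zero. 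Therefore $\omega_{26}\neq0$.

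The main obstacle is the case analysis in the second paragraph: one has to enumerate carefully the sub-cases fixed by which of the remaining coefficients of $f_2$ and $f_3$ (subject to the constraints from Lemma~\ref{lem:subalg} and the bracket relations) vanish, and verify that each collapses to exactly one of the two listed normal forms, with no spurious possibilities surviving. The bracket computations and the concluding Lagrangian check are routine.
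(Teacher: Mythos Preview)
Your outline is correct and follows the paper's proof essentially step for step: the paper computes $[f_1,f_2]$ and $[f_1,[f_1,f_2]]$, uses Lemma~\ref{lem:subalg} to obtain $b_2=b_4=0$, and then splits into the cases $b_3\neq 0$ and $b_3=0$ exactly as you describe, with the same ``$\mathcal{F}$ would be at least $4$-dimensional'' contradiction ruling out the stray sub-case. One small inaccuracy: it is not true that in \emph{every} admissible sub-case $e_6$ arises from a bracket --- in some sub-cases (e.g.\ $b_3=b_5=0$ or $c_5=0$) one of $f_2,f_3$ is already a multiple of $e_6$ --- but this does not affect the argument. Your final paragraph actually supplies the justification for the Lagrangian claim $\omega_{26}\neq 0$, which the paper's proof leaves implicit; your computation $\omega(f_1,e_6)=\omega_{16}+a_2\omega_{26}$ and the resulting contradiction are correct.
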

\begin{proof}
Suppose that $\mathcal{F}$ has basis vectors $f_1,f_2$ as in Lemma \ref{lem:lie algebra subspace sum} (a). We get
\begin{align}
[f_1,f_2]&=-(b_2e_4+b_4e_5+(b_5+a_2b_3-a_3b_2+a_2b_4-a_4b_2)e_6)\label{eqn:L614}\\
[f_1,[f_1,f_2]]&=b_2e_5+b_4e_6+a_2b_2e_6.\nonumber
\end{align}
Lemma \ref{lem:subalg} implies that $b_2=b_4=0$. There are two cases:
\begin{enumerate}
\item $b_3\neq 0$. Then a third basis vector of $\mathcal{F}$ is of the form
\begin{equation*}
f_3=c_5e_5+c_6e_6.
\end{equation*}
If $c_5=0$, then $c_6\neq 0$ and $e_6\in\mathcal{F}$. This results in a basis of type (a). If $c_5\neq 0$, then the equation $[f_1,f_3]=-c_5e_6$ implies again that $e_6\in\mathcal{F}$. This case cannot occur, because then $\mathcal{F}$ has to be at least $4$-dimensional.
\item $b_3=0$. If $b_5=0$, then $b_6\neq 0$, hence $e_6\in\mathcal{F}$. This results either in a basis of type (a) or (b). If $b_5\neq 0$, then equation \eqref{eqn:L614} shows that $e_6\in\mathcal{F}$. This results in a basis of type (b).
\end{enumerate}
\end{proof}

\begin{prop}
The Lie algebra $L_{6,14}$ does not have two complementary $3$-dimensional subalgebras $\mathcal{F},\mathcal{G}$. In particular, $L_{6,14}$ does not admit a bi-Lagrangian structure.
\end{prop}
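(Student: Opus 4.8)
The plan is to argue as in the preceding non-existence proofs, using Lemma \ref{lem:sub L614} as the point of departure. Suppose $(\omega,\mathcal{F},\mathcal{G})$ is a bi-Lagrangian structure on $L_{6,14}$. Each of the two normal forms (a), (b) in Lemma \ref{lem:sub L614} contains the vector $e_6$, so if both $\mathcal{F}$ and $\mathcal{G}$ possessed a basis vector with non-zero $e_1$-component they would both contain $e_6$, contradicting $\mathcal{F}\cap\mathcal{G}=0$; together with Lemma \ref{lem:lie algebra subspace sum}(b) this shows that exactly one of them, say $\mathcal{F}$, has a basis vector with non-zero $e_1$-component, hence a basis of type (a) or (b), and --- since $\mathcal{F}$ is Lagrangian --- $\omega_{26}\neq0$ for the symplectic form of Lemma \ref{lem:614symp}. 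I would then treat the two cases in turn.

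In case (b), $\mathcal{F}$ has basis $e_1+a_2e_2+a_3e_3+a_4e_4,\ e_5,\ e_6$, so the complementary subspace $\mathcal{G}$, which has vanishing $e_1$-component, can be put --- after subtracting suitable multiples of $e_5,e_6\in\mathcal{F}$ --- into the echelon form
\begin{equation*}
g_1=e_2+x_5e_5+x_6e_6,\qquad g_2=e_3+y_5e_5+y_6e_6,\qquad g_3=e_4+z_5e_5+z_6e_6.
\end{equation*}
Since $e_5$ and $e_6$ are central in $L_{6,14}$ and $[e_2,e_3]=-e_6$, one computes $[g_1,g_2]=-e_6$; this vector lies in $\mathcal{F}$, but it also lies in $\mathcal{G}$ because $\mathcal{G}$ is a subalgebra, contradicting $\mathcal{F}\cap\mathcal{G}=0$. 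Here the contradiction is purely algebraic and does not involve $\omega$.

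In case (a), $\mathcal{F}$ has basis $e_1+a_2e_2+a_4e_4+a_5e_5,\ e_3+b_5e_5,\ e_6$, and reducing $\mathcal{G}$ modulo $\mathcal{F}$ in the same way gives a basis
\begin{equation*}
g_1=e_2+\mu_1(e_3+b_5e_5)+\nu_1e_6,\quad g_2=e_4+\mu_2(e_3+b_5e_5)+\nu_2e_6,\quad g_3=e_5+\mu_3(e_3+b_5e_5)+\nu_3e_6.
\end{equation*}
Using $[e_2,e_3]=[e_2,e_4]=-e_6$ and that $e_5,e_6$ are central, a short computation gives $[g_1,g_3]=-\mu_3e_6$ and $[g_1,g_2]=-(1+\mu_2)e_6$; since both brackets must lie in $\mathcal{F}\cap\mathcal{G}=0$, this forces $\mu_3=0$ and $\mu_2=-1$. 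Then $g_2=e_4-e_3-b_5e_5+\nu_2e_6$ and $g_3=e_5+\nu_3e_6$, and substituting into the closed $2$-form of Lemma \ref{lem:614symp} makes all terms collapse to $\omega(g_2,g_3)=-\omega_{26}$. As $\omega_{26}\neq0$, the subspace $\mathcal{G}$ is not Lagrangian, the required contradiction.

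I expect case (a) to be the main obstacle. In case (b) --- and likewise in the earlier argument for $L_{5,6}\oplus A_1$ --- a bracket of two basis vectors of $\mathcal{G}$ is forced to equal the non-zero central vector $\pm e_6\in\mathcal{F}$, which is immediately absurd; in $L_{6,14}$ this mechanism breaks down, since the two contributions to $[g_1,g_2]$ in the $e_6$-direction, one from $[e_2,e_4]$ and one from the $e_3$-component of $g_2$ via $[e_2,e_3]$, can cancel, so the subalgebra condition by itself does not rule out a complementary subalgebra $\mathcal{G}$ when $\mathcal{F}$ is of type (a). One therefore has to invoke the symplectic form and the constraint $\omega_{26}\neq0$ from Lemma \ref{lem:sub L614}; the delicate point is that the remaining subalgebra conditions ($\mu_2=-1$, $\mu_3=0$) nevertheless pin $\mathcal{G}$ down tightly enough for the evaluation $\omega(g_2,g_3)=-\omega_{26}$ to yield the contradiction.
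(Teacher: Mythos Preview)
Your argument is correct and follows essentially the same route as the paper's proof: split into the two normal forms (a) and (b) of Lemma~\ref{lem:sub L614}, dispose of case (b) by the forced bracket $[g_1,g_2]=-e_6\in\mathcal{F}$, and in case (a) use the subalgebra constraints to force $\mu_2=-1$, $\mu_3=0$ and then obtain $\omega(g_2,g_3)=-\omega_{26}\neq 0$. One small correction: $e_5$ is \emph{not} central in $L_{6,14}$ (one has $[e_1,e_5]=-e_6$); your bracket computations are nonetheless valid because $\mathcal{G}$ has vanishing $e_1$-component, so $e_5$ commutes with every element of $\mathcal{G}$.
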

\begin{proof}
Suppose that $L_{6,14}$ has two complementary $3$-dimensional subalgebras $\mathcal{F},\mathcal{G}$. Considering the bases in Lemma \ref{lem:sub L614}, it follows that exactly one of $\mathcal{F},\mathcal{G}$ must have a basis with a non-zero $e_1$-component, say $\mathcal{F}$. Otherwise $\mathcal{F}$ and $\mathcal{G}$ cannot be complementary.

In the first case $\mathcal{F}$ has a basis of the form
\begin{equation*}
e_1+a_2e_2+a_4e_4+a_5e_5,\quad e_3+b_5e_5,\quad e_6.
\end{equation*}
It follows that $\mathcal{G}$ has a basis of the form
\begin{align*}
g_1&=e_2+x_3e_3+x_5e_5+x_6e_6\\
g_2&=e_4+y_3e_3+y_5e_5+y_6e_6\\
g_3&=z_3e_3+z_5e_5+z_6e_6.
\end{align*}
We have $[g_1,g_2]=-e_6-y_3e_6$. Hence $y_3=-1$, otherwise $[g_1,g_2]$ is a multiple of $e_6\in\mathcal{F}$. Similarly, $[g_1,g_3]=-z_3e_6\in\mathcal{F}$, hence $z_3=0$. Then also $z_5\neq 0$, since otherwise $g_3$ is a multiple of $e_6\in\mathcal{F}$. 

The basis is now of the form
\begin{equation*}
g_1=e_2+x_3e_3+x_6e_6,\quad g_2=e_4-e_3+y_6e_6,\quad g_3=e_5+z_6e_6.
\end{equation*}
Considering the symplectic form $\omega$ as in Lemma \ref{lem:614symp} and the fact that $\omega_{26}\neq 0$ according to Lemma \ref{lem:sub L614}, it follows that $\omega(g_2,g_3)\neq 0$, hence $\mathcal{G}$ cannot be Lagrangian. This is a contradiction.

In the second case $\mathcal{F}$ has a basis of the form
\begin{equation*}
e_1+a_2e_2+a_3e_3+a_4e_4,\quad e_5,\quad e_6.
\end{equation*}
It follows that $\mathcal{G}$ has a basis of the form
\begin{equation*}
g_1=e_2+x_5e_5+x_6e_6,\quad g_2=e_3+y_5e_5+y_6e_6,\quad g_3=e_4+z_5e_5+z_6e_6.
\end{equation*}
Then $[g_1,g_2]=-e_6\in\mathcal{F}$. This is a contradiction. 
\end{proof}

\subsubsection{$L_{6,15}$}
\begin{lem}\label{lem:symp 615}
A $2$-form $\omega$ on $L_{6,15}$ is closed if and only if it is of the form
\begin{align*}
\omega&=\omega_{12}\alpha_{12}+\omega_{13}\alpha_{13}+\omega_{23}\alpha_{23}+\omega_{14}\alpha_{14}+\omega_{24}\alpha_{24}-\omega_{15}\alpha_{34}\\
&\quad+\omega_{15}\alpha_{15}-\omega_{16}\alpha_{35}+\omega_{16}\alpha_{16}.
\end{align*}
If the form $\omega$ is symplectic, then $\omega_{16}\neq 0$.
\end{lem}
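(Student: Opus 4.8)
The statement has two parts, and the plan is to treat them separately.

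\textbf{Closed forms.} First I would write a general $2$-form $\omega=\sum_{1\le i<j\le 6}\omega_{ij}\alpha_{ij}$ and compute $d\omega$ using $d\alpha_{ij}=d\alpha_i\wedge\alpha_j-\alpha_i\wedge d\alpha_j$ together with the structure equations of $L_{6,15}$ recorded in Table~\ref{table:structure}. The coefficient of each basis $3$-form $\alpha_{ijk}$ in $d\omega$ yields one homogeneous linear equation in the $15$ unknowns $\omega_{ij}$; the resulting system is essentially triangular, and solving it by hand forces $\omega_{25}=\omega_{45}=\omega_{26}=\omega_{36}=\omega_{46}=\omega_{56}=0$ as well as the two relations that make the coefficient of $\alpha_{34}$ equal to $-\omega_{15}$ and the coefficient of $\alpha_{35}$ equal to $-\omega_{16}$. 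What remains are exactly the seven free parameters $\omega_{12},\omega_{13},\omega_{23},\omega_{14},\omega_{24},\omega_{15},\omega_{16}$ of the displayed form, which gives the ``if and only if''.

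\textbf{Non-degeneracy.} Next I would compute the top power $\omega\wedge\omega\wedge\omega\in\Lambda^6\mathfrak{g}^*$ for a closed $\omega$ of the above form. In the expansion of the cube a summand $\alpha_{ij}\wedge\alpha_{kl}\wedge\alpha_{mn}$ is nonzero only when the three index pairs are pairwise disjoint, i.e.\ when $\{i,j\},\{k,l\},\{m,n\}$ partition $\{1,\dots,6\}$. Among the nine $2$-forms that occur in a closed $\omega$, only $\alpha_{16}$ involves the index $6$, so one block of the partition must be $\{1,6\}$; and the only admissible way to split $\{2,3,4,5\}$ into two of the remaining pairs is $\{2,4\}$ and $\{3,5\}$. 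Hence $\alpha_{16}\wedge\alpha_{24}\wedge\alpha_{35}$ is the unique surviving triple, and, using that the coefficient of $\alpha_{35}$ in $\omega$ equals $-\omega_{16}$, one obtains $\omega^3=\pm\,6\,\omega_{16}^2\,\omega_{24}\,\alpha_{123456}$. If $\omega_{16}=0$ this vanishes, so $\omega$ is degenerate; therefore every symplectic form on $L_{6,15}$ satisfies $\omega_{16}\neq 0$.

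\textbf{Main obstacle.} There is no conceptual difficulty: both steps are finite computations. The only care needed is in the sign and combinatorial bookkeeping of the wedge products, and even that is light because six of the fifteen basis $2$-forms are absent from every closed form, which pins down the surviving triple in $\omega^3$ to a single possibility.
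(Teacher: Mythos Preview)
Your proposal is correct and follows exactly the routine computation the paper has in mind; in fact the paper states Lemma~\ref{lem:symp 615} without proof, treating it as a straightforward check from the structure equations in Table~\ref{table:structure}. Your identification of the vanishing coefficients and the two relations $\omega_{34}=-\omega_{15}$, $\omega_{35}=-\omega_{16}$ is accurate, and your argument that $\{1,6\},\{2,4\},\{3,5\}$ is the unique disjoint triple appearing in $\omega^3$ (giving $\omega^3=6\,\omega_{16}^2\,\omega_{24}\,\alpha_{123456}$) cleanly yields $\omega_{16}\neq 0$ for non-degeneracy.
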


\begin{lem}\label{lem:sub L615} Suppose that $L_{6,15}$ has two complementary $3$-dimensional subalgebras $\mathcal{F},\mathcal{G}$. Then one of the two subalgebras, say $\mathcal{F}$, has a basis that contains two vectors of the following form:
\begin{equation*}
\textstyle e_1+\sum_{i=2}^5a_ie_i,\quad e_6.
\end{equation*}
\end{lem}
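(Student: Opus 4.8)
The plan is to put one of the two subalgebras into a normal form via Lemma~\ref{lem:lie algebra subspace sum}(b) and then show that this subalgebra must contain $e_6$. Renaming $\mathcal{F},\mathcal{G}$ if necessary, we may assume $\mathcal{F}$ has a basis
\begin{equation*}
\textstyle f_1=e_1+\sum_{i=2}^6a_ie_i,\quad f_2=\sum_{j=2}^6b_je_j,\quad f_3=\sum_{k=2}^6c_ke_k.
\end{equation*}
Once we know that $e_6\in\mathcal{F}$, replacing $f_1$ by $f_1-a_6e_6=e_1+\sum_{i=2}^5a_ie_i\in\mathcal{F}$ yields, together with $e_6$, two linearly independent vectors of $\mathcal{F}$ of the required shape, and we are done.

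To prove $e_6\in\mathcal{F}$ I would use the structure constants of $L_{6,15}$ (Table~\ref{table:structure}), which are $[e_1,e_2]=-e_4$, $[e_1,e_4]=-e_5$, $[e_2,e_3]=-e_5$, $[e_1,e_5]=-e_6$, $[e_3,e_4]=e_6$, together with Lemma~\ref{lem:subalg} applied to the $3$-dimensional nilpotent subalgebra $\mathcal{F}$. A direct computation gives
\begin{equation*}
[f_1,f_2]=-b_2e_4-(b_4+a_2b_3-a_3b_2)e_5-(b_5-a_3b_4+a_4b_3)e_6
\end{equation*}
and
\begin{equation*}
[f_1,[f_1,f_2]]=b_2e_5+(b_4+a_2b_3-2a_3b_2)e_6.
\end{equation*}
Since every $3$-dimensional nilpotent Lie algebra $\mathfrak{h}$ satisfies $[\mathfrak{h},[\mathfrak{h},\mathfrak{h}]]=0$ by Lemma~\ref{lem:subalg}, the second bracket must vanish, which forces $b_2=0$ and $b_4=-a_2b_3$. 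The identical argument applied to $f_3$ gives $c_2=0$ and $c_4=-a_2c_3$.

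Feeding these relations back into the bracket formulas, one checks that $[f_1,f_2]$, $[f_1,f_3]$ and $[f_2,f_3]$ all lie in the line $\langle e_6\rangle$; in particular the coefficient $b_3c_4-b_4c_3$ in $[f_2,f_3]=(b_3c_4-b_4c_3)e_6$ vanishes identically. Hence $[\mathcal{F},\mathcal{F}]\subseteq\langle e_6\rangle$, and I would now split into two cases. If $[\mathcal{F},\mathcal{F}]\neq 0$, then $[\mathcal{F},\mathcal{F}]=\langle e_6\rangle\subseteq\mathcal{F}$, so $e_6\in\mathcal{F}$. If $[\mathcal{F},\mathcal{F}]=0$, i.e.\ $\mathcal{F}$ is abelian, then $[f_1,f_2]=[f_1,f_3]=0$ forces in addition $b_5=-(a_2a_3+a_4)b_3$ and $c_5=-(a_2a_3+a_4)c_3$, so that $f_2=b_3v+b_6e_6$ and $f_3=c_3v+c_6e_6$ for the single vector $v=e_3-a_2e_4-(a_2a_3+a_4)e_5$; since $f_2,f_3$ are linearly independent they span $\langle v,e_6\rangle$, and again $e_6\in\mathcal{F}$.

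The bracket computations are routine once the structure constants of $L_{6,15}$ are in hand, so the one place that needs care is this split: Lemma~\ref{lem:subalg} directly pins down only $b_2,b_4$ (and $c_2,c_4$), and one genuinely has to treat the abelian case separately in order to force $e_6$ into $\mathcal{F}$. Everything else, including the final normalization of $f_1$, is immediate.
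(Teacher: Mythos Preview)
Your proof is correct and follows essentially the same approach as the paper: both compute $[f_1,f_2]$ and $[f_1,[f_1,f_2]]$, invoke Lemma~\ref{lem:subalg} to force $b_2=0$ and $b_4=-a_2b_3$ (and likewise for $f_3$), and then case-split to conclude $e_6\in\mathcal{F}$. Your abelian/non-abelian dichotomy, based on the observation $[\mathcal{F},\mathcal{F}]\subseteq\langle e_6\rangle$, is a slightly cleaner repackaging of the paper's nested case analysis on the coefficients $b_5-a_3b_4+a_4b_3$ and $b_3,c_3$, but the content is the same.
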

\begin{proof}
Suppose that $\mathcal{F}$ has basis vectors $f_1,f_2$ as in Lemma \ref{lem:lie algebra subspace sum} (a). We get
\begin{align}
[f_1,f_2]&=-(b_2e_4+(b_4+a_2b_3-a_3b_2)e_5+(b_5-a_3b_4+a_4b_3)e_6)\label{eqn:L615}\\
[f_1,[f_1,f_2]]&=b_2e_5+(b_4+a_2b_3-2a_3b_2)e_6\nonumber.
\end{align}
Lemma \ref{lem:subalg} implies that $b_2=0$ and $b_4+a_2b_3-2a_3b_2=0$. There are two cases:
\begin{enumerate}
\item $b_5-a_3b_4+a_4b_3\neq 0$. Then equation \eqref{eqn:L615} shows that $e_6\in\mathcal{F}$.
\item $b_5-a_3b_4+a_4b_3=0$. A third basis vector of $\mathcal{F}$ is then of the form
\begin{equation*}
f_3=c_3e_3+c_4e_4+c_5e_5+c_6e_6
\end{equation*} 
with $c_4+a_2c_3-2a_3c_2=0$. There are then two subcases:
\begin{itemize}
\item $b_3=0$ and $c_3=0$. Then also $b_4=b_5=0$, hence only $b_6\neq 0$ and $e_6\in\mathcal{F}$.
\item $b_3\neq 0$ and $c_3=0$. Then also $c_4=0$. If $c_5\neq 0$, then the equation $[f_1,f_3]=-c_5e_6$ shows that $e_6\in\mathcal{F}$. This case cannot occur, because then $\mathcal{F}$ had to be at least $4$-dimensional. If $c_5=0$, then only $c_6\neq 0$ and again $e_6\in\mathcal{F}$. 
\end{itemize}
\end{enumerate}
\end{proof}

\begin{prop} The Lie algebra $L_{6,15}$ does not admit a bi-Lagrangian structure.
\end{prop}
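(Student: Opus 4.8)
\emph{Proof proposal.} The plan is to combine the two preceding lemmas into a one-line pairing argument, so that essentially all the work has already been done. Suppose, for contradiction, that $(\omega,\mathcal{F},\mathcal{G})$ is a bi-Lagrangian structure on $L_{6,15}$. In particular $\mathfrak{g}=\mathcal{F}\oplus\mathcal{G}$ is a direct sum of two complementary $3$-dimensional subalgebras, so Lemma \ref{lem:sub L615} applies: after possibly interchanging the names of $\mathcal{F}$ and $\mathcal{G}$, the subalgebra $\mathcal{F}$ has a basis containing two vectors of the form
\begin{equation*}
\textstyle f_1=e_1+\sum_{i=2}^5 a_ie_i,\qquad f_2=e_6 .
\end{equation*}
(Note that the $e_6$-component of $f_1$ has been normalized to $0$ by subtracting a multiple of $f_2=e_6\in\mathcal{F}$; this is the only bookkeeping point that needs a word in the write-up.)

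Next I would evaluate $\omega$ on this pair. By Lemma \ref{lem:symp 615}, in any closed $2$-form on $L_{6,15}$ the index $6$ occurs only in the single summand $\omega_{16}\alpha_{16}$, so for every $X\in\mathfrak{g}$ one has $\omega(X,e_6)=\omega_{16}\,\alpha_1(X)$. Applying this to $X=f_1$, whose $e_1$-coefficient is $1$, gives $\omega(f_1,f_2)=\omega(f_1,e_6)=\omega_{16}$. Since $\omega$ is symplectic, Lemma \ref{lem:symp 615} forces $\omega_{16}\neq 0$, hence $\omega(f_1,f_2)\neq 0$. But $f_1,f_2\in\mathcal{F}$ and $\mathcal{F}$ is Lagrangian, hence isotropic, so $\omega(f_1,f_2)=0$ --- a contradiction. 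Therefore $L_{6,15}$ admits no bi-Lagrangian structure.

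There is no remaining obstacle in this final deduction: the difficulty is entirely front-loaded into Lemma \ref{lem:symp 615}, which pins down the closed and symplectic $2$-forms (in particular that $e_6$ is only paired with $e_1$), and into Lemma \ref{lem:sub L615}, whose proof uses the nilpotency bound of Lemma \ref{lem:subalg} on the iterated brackets $[f_1,f_2]$ and $[f_1,[f_1,f_2]]$ to show that one of the two complementary subalgebras must contain $e_6$ alongside a vector carrying the $e_1$-direction. Once those are in hand, the isotropy of $\mathcal{F}$ is immediately violated by the $\alpha_{16}$-term, so the proof is short. If one prefers symmetry with the treatments of $L_{6,13}$ and $L_{6,14}$, the same conclusion can alternatively be phrased as: $\mathcal{F}$ cannot be Lagrangian for the symplectic form of Lemma \ref{lem:symp 615}, which is exactly the contradiction above.
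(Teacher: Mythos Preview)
Your proof is correct and follows exactly the paper's approach: the paper's one-line proof simply states that, given the symplectic form in Lemma~\ref{lem:symp 615}, the subalgebra $\mathcal{F}$ produced by Lemma~\ref{lem:sub L615} cannot be Lagrangian, and you have spelled out precisely why --- namely that $\omega(f_1,e_6)=\omega_{16}\neq 0$. Your bookkeeping remark about normalizing away the $e_6$-component of $f_1$ is in fact already built into the statement of Lemma~\ref{lem:sub L615}, so it is not needed.
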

\begin{proof} 
Considering a symplectic form as in Lemma \ref{lem:symp 615} it follows that the subalgebra $\mathcal{F}$ in Lemma \ref{lem:sub L615} cannot be Lagrangian.
\end{proof}

\subsubsection{$L_{6,17}^+$}

\begin{lem}\label{lem:617+} Let $\mathcal{F}$ be a $3$-dimensional subalgebra of $L_{6,17}^+$ with a basis vector having a non-zero $e_1$-component. Then $\mathcal{F}$ has a basis of either one of the following two forms:
\begin{enumerate}
\item $e_1+a_2e_2+a_3e_3+a_5e_5,\quad e_4,\quad e_6$
\item $e_1+a_2e_2+a_3e_3+a_4e_4,\quad e_5+b_4e_4,\quad e_6$
\end{enumerate}
\end{lem}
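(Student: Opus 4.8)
The plan is to proceed exactly as in the proofs of Lemmas~\ref{lem:sub L613} and~\ref{lem:sub L614}; note that, unlike some of those statements, the present lemma is purely Lie-theoretic and makes no use of a symplectic form. Since $\mathcal{F}$ is a $3$-dimensional subspace having a basis vector with non-zero $e_1$-component, Lemma~\ref{lem:lie algebra subspace sum}(a) gives a basis
\begin{equation*}
f_1=e_1+\sum_{i=2}^6a_ie_i,\qquad f_2=\sum_{j=2}^6b_je_j,\qquad f_3=\sum_{k=2}^6c_ke_k ,
\end{equation*}
and by Gaussian elimination among $f_2,f_3$ we may assume these two vectors are in echelon form with respect to $e_2,\ldots,e_6$.

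First I would read off the structure constants of $L_{6,17}^+$ from Table~\ref{table:structure} and compute $[f_1,f_2]$ as a linear combination of $e_4,e_5,e_6$ whose coefficients are polynomials in the $a_i,b_j$ (just as in the analogous formula \eqref{eqn:L613} for $L_{6,13}$), together with the iterated bracket $[f_1,[f_1,f_2]]$ and the corresponding brackets for $f_3$. A subalgebra of a nilpotent Lie algebra is itself nilpotent, so $\mathcal{F}$ is a $3$-dimensional nilpotent Lie algebra and Lemma~\ref{lem:subalg} yields $[\mathcal{F},[\mathcal{F},\mathcal{F}]]=0$ together with $\dim[\mathcal{F},\mathcal{F}]\le 1$. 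Imposing $[f_1,[f_1,f_2]]=0$ (and its analogue for $f_3$) forces several of the lowest-index coefficients of $f_2,f_3$ to vanish, in parallel with the deductions $b_2=b_4=0$ in Lemmas~\ref{lem:sub L613} and~\ref{lem:sub L614}.

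Next I would split into cases according to the surviving parameters, mirroring the case analyses for $L_{6,13}$, $L_{6,14}$ and $L_{6,15}$. When the leading surviving coefficient of $f_2$ (the analogue of $b_3$) is non-zero, $[f_1,f_2]$ has a component outside $\langle e_6\rangle$; applying $\dim[\mathcal{F},\mathcal{F}]\le 1$ to $[f_1,f_2]$ and $[f_1,f_3]$ then forces the corresponding coefficient of $f_3$ to vanish, and after rescaling and subtracting multiples of the remaining basis vectors one arrives at one of the two stated forms, with $e_6\in\mathcal{F}$. In the complementary branch one reads $e_6\in\mathcal{F}$ directly off the expression for $[f_1,f_2]$, and the remaining two vectors normalize to form (a) or (b); exactly as for $L_{6,13}$, $L_{6,14}$ and $L_{6,15}$ there is a spurious subcase in which $\mathcal{F}$ would contain $e_6$ together with two further independent vectors not involving $e_6$, hence be at least $4$-dimensional, and this subcase must be discarded.

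The routine but delicate part of the argument is the bookkeeping of the case analysis: checking exhaustiveness, keeping track of which elementary basis changes remain available in each branch — in particular whether $e_4$ itself lies in $\mathcal{F}$, which is precisely what separates form (a) from form (b) — and verifying that the two listed families are genuinely closed under the bracket. That last verification is what determines which cross-terms $[e_i,e_j]$ of $L_{6,17}^+$ actually intervene and confirms that no further normalization of the bases is possible.
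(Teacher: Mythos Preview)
Your overall strategy matches the paper's proof exactly: start from the basis of Lemma~\ref{lem:lie algebra subspace sum}(a), compute $[f_1,f_2]$ and $[f_1,[f_1,f_2]]$, and use Lemma~\ref{lem:subalg} to constrain the coefficients. However, the details you anticipate from the analogy with $L_{6,13}$ and $L_{6,14}$ are off. For $L_{6,17}^+$ one finds
\[
[f_1,[f_1,f_2]]=b_2e_4+a_2b_2e_5+\bigl(b_3(1+a_2^2)-a_2a_3b_2\bigr)e_6,
\]
so $[\mathcal{F},[\mathcal{F},\mathcal{F}]]=0$ forces $b_2=0$ and then $b_3(1+a_2^2)=0$, hence $b_3=0$ (the sign in $1+a_2^2$ is decisive here; compare $L_{6,17}^-$, where $1-a_2^2$ can vanish and extra bases appear). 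This is not parallel to the deduction $b_2=b_4=0$ in the lemmas you cite. With $b_2=b_3=0$ one gets $[f_1,f_2]=-(b_4+a_2b_5)e_6$, which lies \emph{entirely} in $\langle e_6\rangle$; so your claim that $[f_1,f_2]$ has a component outside $\langle e_6\rangle$ in the non-trivial branch is false, and the $\dim[\mathcal{F},\mathcal{F}]\le 1$ argument you sketch yields nothing further at that stage.

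The paper's actual case split, after $b_2=b_3=0$ (and likewise $c_2=c_3=0$, so $f_2,f_3\in\mathrm{span}(e_4,e_5,e_6)$), is on $b_5$. If $b_5=0$ one reads $e_6\in\mathcal{F}$ either from $b_4\neq 0$ via $[f_1,f_2]=-b_4e_6$, or from $b_4=0,\,b_6\neq 0$ directly, landing in form (a) or (b). If $b_5\neq 0$ one may take $f_3=c_4e_4+c_6e_6$; then $c_4\neq 0$ gives $[f_1,f_3]=-c_4e_6$ and is the spurious $4$-dimensional subcase you mention, while $c_4=0$ gives $e_6\in\mathcal{F}$ and form (b). So the plan is sound, but the execution sketch needs its coefficients and case structure adjusted to the actual brackets of $L_{6,17}^+$ rather than imported from $L_{6,13}$ and $L_{6,14}$.
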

\begin{proof}
Suppose that $\mathcal{F}$ has basis vectors $f_1,f_2$ as in Lemma \ref{lem:lie algebra subspace sum} (a). We get
\begin{align}
[f_1,f_2]&=-(b_2e_3+b_3e_4+(a_2b_3-a_3b_2)e_5+(b_4+a_2b_5-a_5b_2)e_6)\label{eq:L617+}\\
[f_1,[f_1,f_2]]&= b_2e_4+a_2b_2e_5+(b_3(1+a_2^2)-a_2a_3b_2)e_6.\nonumber
\end{align}
Lemma \ref{lem:subalg} implies that $b_2=b_3=0$. There are two cases:
\begin{enumerate}
\item $b_5=0$. If also $b_4=0$, then only $b_6\neq 0$ and $e_6\in\mathcal{F}$. This results in a basis of type (a) or (b). If $b_4\neq 0$, then equation \eqref{eq:L617+} shows that $e_6\in\mathcal{F}$. This results in a basis of type (a).
\item $b_5\neq 0$. A third basis vector of $\mathcal{F}$ is then of the form
\begin{equation*}
f_3=c_4e_4+c_6e_6.
\end{equation*}
If $c_4=0$, then $c_6\neq 0$ and $e_6\in\mathcal{F}$. This results in a basis of type (b). If $c_4\neq 0$, then the equation $[f_1,f_3]=-c_4e_6$ again implies that $e_6\in\mathcal{F}$. This case cannot occur, because then $\mathcal{F}$ had to be at least $4$-dimensional.
\end{enumerate}
\end{proof}
\begin{prop}
The Lie algebra $L_{6,17}^+$ does not have two complementary $3$-dimensional subalgebras $\mathcal{F},\mathcal{G}$. In particular, $L_{6,17}^+$ does not admit a bi-Lagrangian structure.
\end{prop}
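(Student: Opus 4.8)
The plan is to follow the same pattern as in the preceding propositions for $L_{6,14}$ and $L_{5,6}\oplus A_1$. Suppose $L_{6,17}^+$ has two complementary $3$-dimensional subalgebras $\mathcal{F},\mathcal{G}$. By Lemma \ref{lem:lie algebra subspace sum}(b) at least one of them, say $\mathcal{F}$, has a basis vector with non-zero $e_1$-component, so by Lemma \ref{lem:617+} its basis has the form (a) or (b); in particular $e_6\in\mathcal{F}$. Since \emph{every} $3$-dimensional subalgebra with a basis vector having non-zero $e_1$-component contains $e_6$, the subalgebra $\mathcal{G}$ cannot have such a basis vector (otherwise $e_6\in\mathcal{F}\cap\mathcal{G}=0$), hence $\mathcal{G}\subset V:=\langle e_2,e_3,e_4,e_5,e_6\rangle$. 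Exactly as in those proofs, $\mathcal{G}$ is then a complement of $\mathcal{F}\cap V$ inside $V$, and I would write out $\mathcal{G}$ in echelon form to exploit this.

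For the computations I would first record the structure constants of $L_{6,17}^+$, which can be read off from the bracket computations in the proof of Lemma \ref{lem:617+} (or from Table \ref{table:structure}): the only non-zero brackets are $[e_1,e_2]=-e_3$, $[e_1,e_3]=-e_4$, $[e_1,e_4]=-e_6$, $[e_2,e_3]=-e_5$, $[e_2,e_5]=-e_6$; in particular $e_6$ is central and spans the last non-zero term of the lower central series. In both cases (a) and (b), $\mathcal{F}\cap V$ is $2$-dimensional and contained in $\langle e_3,e_4,e_5,e_6\rangle$ (it equals $\langle e_4,e_6\rangle$ in case (a) and $\langle e_5+b_4e_4,e_6\rangle$ in case (b)). Therefore $\mathcal{G}\oplus(\mathcal{F}\cap V)=V$ forces $\mathcal{G}$ to contain an element $g_1=e_2+\ast e_3+\ast e_4+\ast e_5+\ast e_6$, while $\mathcal{G}':=\mathcal{G}\cap\langle e_3,e_4,e_5,e_6\rangle$ is $2$-dimensional and complements $\mathcal{F}\cap V$ in $\langle e_3,e_4,e_5,e_6\rangle$. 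In case (a) this gives a basis $g_2=e_3+\ast e_4+\ast e_6$, $g_3=e_5+\ast e_4+\ast e_6$ of $\mathcal{G}'$; since $e_6$ is central and all brackets among $e_3,e_4,e_5$ vanish, $[g_1,g_3]=[e_2,e_5]=-e_6\in\mathcal{F}$, contradicting $[g_1,g_3]\in\mathcal{G}$. In case (b) one obtains $g_3=e_4+p'(e_5+b_4e_4)+\ast e_6$ for some $p'$; if $p'\neq 0$ then $[g_1,g_3]=-p'e_6\in\mathcal{F}$ is again a contradiction, and if $p'=0$ then $g_3=e_4+\ast e_6$ and $[g_1,g_2]=-e_5+\ast e_6$, a non-zero vector with vanishing $e_2$-, $e_3$- and $e_4$-component, which therefore cannot lie in $\mathcal{G}=\langle g_1,g_2,g_3\rangle$ — a contradiction. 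Since $\mathcal{F}$ with a basis as in Lemma \ref{lem:lie algebra subspace sum}(a) is forced to have the shape in Lemma \ref{lem:617+}, this exhausts all possibilities.

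The routine part is the linear algebra needed to put a complement of $\mathcal{F}\cap V$ into echelon form and the bilinear bookkeeping in the bracket computations. The only genuine case distinction is the split in case (b) according to whether $[g_1,g_3]$ vanishes, and the structural fact that makes everything go through is that $e_6$ is central, lies in $\mathcal{F}$, and is hit by $[e_2,e_5]$, so that the bracket of any $e_2$-vector of $\mathcal{G}$ with any $e_5$-vector of $\mathcal{G}$ is a multiple of $e_6\in\mathcal{F}$. I do not expect any real obstacle beyond carrying out these computations carefully.
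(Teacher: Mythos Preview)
Your proposal is correct and follows the same overall strategy as the paper: reduce $\mathcal{F}$ to one of the two shapes in Lemma~\ref{lem:617+}, note that $e_6\in\mathcal{F}$ forces $\mathcal{G}\subset\langle e_2,\dots,e_6\rangle$, write $\mathcal{G}$ in echelon form, and then derive a contradiction from a bracket in $\mathcal{G}$ landing in $\mathcal{F}$.

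The only difference is in which bracket produces the contradiction. The paper uses the \emph{same} triple bracket $[g_1,[g_1,g_2]]=e_6$ in both cases (a) and (b): with $g_1=e_2+\cdots$ and $g_2=e_3+\cdots$ one has $[g_1,g_2]=-e_5+\ast e_6$ and then $[g_1,-e_5]=e_6$, regardless of the shape of $g_3$. This avoids your sub-case split in (b) on whether $g_3$ has an $e_5$-component. Your argument is equally valid---in case (a) your single bracket $[g_1,g_3]=-e_6$ is even quicker than the paper's double bracket---but the paper's uniform computation is a bit cleaner overall and sidesteps the need to normalise $g_3$ carefully against $e_5+b_4e_4$.
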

\begin{proof} Suppose that $L_{6,17}^+$ has two complementary $3$-dimensional subalgebras $\mathcal{F},\mathcal{G}$. Considering the bases in Lemma \ref{lem:617+}, it follows that exactly one of $\mathcal{F},\mathcal{G}$ must have a basis with a non-zero $e_1$-component, say $\mathcal{F}$. Otherwise $\mathcal{F}$ and $\mathcal{G}$ cannot be complementary. 

In the first case $\mathcal{F}$ has a basis of the form
\begin{equation*}
e_1+a_2e_2+a_3e_3+a_5e_5,\quad e_4,\quad e_6.
\end{equation*}
It follows that $\mathcal{G}$ has a basis of the form
\begin{equation*}
g_1=e_2+x_4e_4+x_6e_6,\quad g_2=e_3+y_4e_4+y_6e_6,\quad g_3=e_5+z_4e_4+z_6e_6.
\end{equation*}
We get $[g_1,[g_1,g_2]]=e_6\in\mathcal{F}$. This is a contradiction.

In the second case $\mathcal{F}$ has a basis of the form
\begin{equation*}
e_1+a_2e_2+a_3e_3+a_4e_4,\quad e_5+b_4e_4,\quad e_6.
\end{equation*}
It follows that $\mathcal{G}$ has a basis of the form
\begin{align*}
g_1&=e_2+x_4e_4+x_5e_5+x_6e_6\\
g_2&=e_3+y_4e_4+y_5e_5+y_6e_6\\
g_3&=z_4e_4+z_5e_5+z_6e_6.
\end{align*}
We get $[g_1,[g_1,g_2]]=e_6\in\mathcal{F}$. This is again a contradiction.
\end{proof}

\subsubsection{$L_{6,17}^-$}
\begin{lem}\label{lem:617-symp} Every closed $2$-form on $L_{6,17}^-$ is of the form
\begin{align*}
\omega&=\omega_{12}\alpha_{12}+\omega_{13}\alpha_{13}+\omega_{23}\alpha_{23}+\omega_{14}\alpha_{14}+\omega_{15}\alpha_{24}-\omega_{26}\alpha_{34}\\
&\quad+\omega_{15}\alpha_{15}+\omega_{25}\alpha_{25}-\omega_{16}\alpha_{35}+\omega_{16}\alpha_{16}+\omega_{26}\alpha_{26}.
\end{align*}
The form $\omega$ is symplectic if and only if $(\omega_{16}^2+\omega_{26}^2)\omega_{15}-\omega_{16}\omega_{26}(\omega_{25}+\omega_{14})\neq 0$.
\end{lem}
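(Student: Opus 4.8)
The plan is to handle the two claims in the lemma separately: first determine all closed $2$-forms by solving the linear system $d\omega=0$, and then test non-degeneracy of the resulting closed forms with a single Pfaffian computation.

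\emph{Step 1 (closedness).} Starting from a general $2$-form $\omega=\sum_{i<j}a_{ij}\alpha_{ij}$ with unknowns $a_{ij}\in\mathbb{R}$, I would compute $d\omega=\sum_{i<j}a_{ij}\bigl(d\alpha_i\wedge\alpha_j-\alpha_i\wedge d\alpha_j\bigr)$ using the structure constants of $L_{6,17}^-$ from Table \ref{table:structure}. Only the $a_{ij}$ attached to an index in $\{4,5,6\}$, together with whichever of $a_{12},a_{13},a_{23}$ reappear through $d\alpha_4,d\alpha_5,d\alpha_6$, can contribute. Expanding $d\omega$ in the basis $\{\alpha_{ijk}\}_{i<j<k}$ of $\Lambda^3\mathfrak{g}^*$ and setting each coefficient to zero gives a small homogeneous linear system; I expect it to force $a_{36}=a_{45}=a_{46}=a_{56}=0$ together with the identifications $a_{24}=a_{15}$, $a_{34}=-a_{26}$, $a_{35}=-a_{16}$, leaving the eight coefficients $a_{12},a_{13},a_{23},a_{14},a_{15},a_{25},a_{16},a_{26}$ free. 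Renaming $a_{ij}=\omega_{ij}$ then reproduces the displayed normal form.

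\emph{Step 2 (non-degeneracy).} For a closed form let $A=(a_{ij})$ be the associated skew-symmetric $6\times6$ matrix; then $\omega$ is symplectic if and only if $\mathrm{Pf}(A)\neq0$ (equivalently $\omega^3\neq0$). I would write $\mathrm{Pf}(A)$ as the signed sum over the fifteen perfect matchings of $\{1,\dots,6\}$. The key simplification is that, on a closed form, $a_{36}=a_{45}=a_{46}=a_{56}=0$, so eleven of the fifteen terms vanish and only the matchings $\{14,26,35\}$, $\{15,26,34\}$, $\{16,24,35\}$, $\{16,25,34\}$ survive, leaving
\begin{equation*}
\mathrm{Pf}(A)=a_{14}a_{26}a_{35}-a_{15}a_{26}a_{34}-a_{16}a_{24}a_{35}+a_{16}a_{25}a_{34}.
\end{equation*}
Substituting the closedness relations $a_{24}=\omega_{15}$, $a_{34}=-\omega_{26}$, $a_{35}=-\omega_{16}$ (and the untouched $a_{14}=\omega_{14}$, $a_{15}=\omega_{15}$, $a_{16}=\omega_{16}$, $a_{25}=\omega_{25}$, $a_{26}=\omega_{26}$) should collapse this to $(\omega_{16}^2+\omega_{26}^2)\omega_{15}-\omega_{16}\omega_{26}(\omega_{25}+\omega_{14})$, which is precisely the asserted criterion.

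\emph{Main obstacle.} There is no conceptual difficulty here; the whole argument is bookkeeping. The part most prone to error is sign-tracking — reordering triple wedge products while expanding $d\omega$ in $\Lambda^3\mathfrak{g}^*$, and assigning the correct sign to each of the fifteen matchings in the Pfaffian. The one genuinely useful observation is the output of Step 1 that closedness already annihilates the four entries $a_{36},a_{45},a_{46},a_{56}$; this is what reduces the Pfaffian to the short four-term expression above and makes the symplectic condition transparent.
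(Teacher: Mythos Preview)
Your proposal is correct. The paper states this lemma without proof, treating it as a routine computation; your two-step approach (solve the linear system $d\omega=0$ in the basis $\{\alpha_{ijk}\}$, then evaluate the Pfaffian on the resulting normal form) is exactly the expected calculation, and I have checked that both the closedness relations and the four-term Pfaffian you obtain are correct.
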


\begin{lem}\label{lem:sub L617-} Let $\mathcal{F}$ be a $3$-dimensional subalgebra of $L_{6,17}^-$ with a basis vector having a non-zero $e_1$-component. Then $\mathcal{F}$ has a basis of either one of the following four forms:
\begin{enumerate}
\item $e_1+a_2e_2+a_3e_3+a_5e_5,\quad e_4,\quad e_6$
\item $e_1+a_2e_2+a_3e_3+a_4e_4,\quad e_5+b_4e_4,\quad e_6$
\item $e_1+e_2+a_5e_5+a_6e_6,\quad e_3+b_5e_5+b_6e_6,\quad e_4+e_5-b_5e_6$
\item $e_1-e_2+a_5'e_5+a_6'e_6,\quad e_3+b_5'e_5+b_6'e_6,\quad e_4-e_5+b_5'e_6$
\end{enumerate}
Subalgebras with bases in (c) and (d) cannot be Lagrangian for any symplectic form on $L_{6,17}^-$.
\end{lem}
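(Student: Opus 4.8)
For the classification I would run exactly the argument used in the proofs of Lemmas~\ref{lem:sub L614} and~\ref{lem:617+}. By Lemma~\ref{lem:lie algebra subspace sum}~(a) one may assume $\mathcal{F}=\langle f_1,f_2,f_3\rangle$ with $f_1=e_1+\sum_{i=2}^{6}a_ie_i$, $f_2=\sum_{j=2}^{6}b_je_j$, $f_3=\sum_{k=2}^{6}c_ke_k$. From the structure constants of $L_{6,17}^-$ (Table~\ref{table:structure}) one computes $[f_1,f_2]$ and the iterated bracket $[f_1,[f_1,f_2]]$, which must vanish because a $3$-dimensional nilpotent Lie algebra $\mathfrak{h}$ satisfies $[\mathfrak{h},[\mathfrak{h},\mathfrak{h}]]=0$ (Lemma~\ref{lem:subalg}). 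Comparing components forces $b_2=c_2=0$ and then the relations $b_3(1-a_2^2)=0$, $c_3(1-a_2^2)=0$. This is the one place where $L_{6,17}^-$ differs from $L_{6,17}^+$: the factor that was $1+a_2^2$ (hence never zero) in the proof of Lemma~\ref{lem:617+} is here $1-a_2^2$, which vanishes precisely when $a_2=\pm1$, and this is what produces the two extra families (c), (d).

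The argument then splits. If $a_2\neq\pm1$ then $b_3=c_3=0$ and the remainder is word-for-word the proof of Lemma~\ref{lem:617+}, yielding bases of types (a) and (b). If $a_2=1$ one keeps going: $f_2,f_3$ span a $2$-plane in $\langle e_3,e_4,e_5,e_6\rangle$; putting this plane in echelon form with respect to $e_3,\dots,e_6$, eliminating the $e_3$- and $e_4$-components of $f_1$ by subtracting multiples of $f_2,f_3$, and imposing the closure conditions $[f_i,f_j]\in\mathcal{F}$ determines the remaining coefficients. I expect that only the echelon shape with pivots in $e_3$ and $e_4$ gives a genuinely new family, and that for it the closure conditions force the third vector to be $e_4+e_5-b_5e_6$ --- its $e_5$-coefficient equal to $a_2=1$ and its $e_6$-coefficient equal to $-b_5$ --- giving exactly the basis in (c), while the other echelon shapes reproduce (a)/(b) or cannot occur (as in the ``$\dim\mathcal{F}\ge 4$'' arguments in the other proofs). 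The case $a_2=-1$ then follows by applying the sign-change automorphism of $L_{6,17}^-$ that negates $e_2,e_3,e_4,e_6$ and fixes $e_1,e_5$, which carries type-(c) bases to type-(d) bases.

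For the last assertion I would take any symplectic form $\omega$, written as in Lemma~\ref{lem:617-symp}; only the coefficients of $\omega$ and the shapes of (c), (d) enter, not the structure constants. For the basis in (c),
\begin{equation*}
\omega(f_2,f_3)=-(\omega_{16}+\omega_{26}),\qquad \omega(f_1,f_3)=\omega_{14}+2\omega_{15}+\omega_{25}-b_5(\omega_{16}+\omega_{26}).
\end{equation*}
If $\mathcal{F}$ were Lagrangian, the first equation forces $\omega_{26}=-\omega_{16}$ and then the second forces $\omega_{14}+2\omega_{15}+\omega_{25}=0$; feeding both into the non-degeneracy condition of Lemma~\ref{lem:617-symp} collapses it to
\begin{equation*}
(\omega_{16}^2+\omega_{26}^2)\omega_{15}-\omega_{16}\omega_{26}(\omega_{25}+\omega_{14})=\omega_{16}^2\,(\omega_{14}+2\omega_{15}+\omega_{25})=0,
\end{equation*}
contradicting non-degeneracy. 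For (d) the analogous computation gives $\omega(f_2,f_3)=\omega_{16}-\omega_{26}$ and $\omega(f_1,f_3)=\omega_{14}-2\omega_{15}+\omega_{25}$, and the two Lagrangian conditions again annihilate the non-degeneracy expression; the third pairing $\omega(f_1,f_2)$ is never needed.

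The hard part will be the $a_2=\pm1$ regime of the classification: one must work through all echelon shapes of the plane $\langle f_2,f_3\rangle$ and verify that each either collapses to (a), (b) or cannot occur, so that (c) and (d) are exactly the surviving new forms. The Lagrangian obstruction is routine once the bases are in hand; the only thing to watch is selecting the two pairings that force non-degeneracy to fail.
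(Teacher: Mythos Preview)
Your proposal is correct and follows the paper's structure closely. The observation that the factor $1+a_2^2$ in the $L_{6,17}^+$ argument becomes $1-a_2^2$ here, opening up the exceptional cases $a_2=\pm 1$, is exactly the paper's key point, and your Lagrangian obstruction for (c) and (d) matches the paper verbatim: the same two pairings $\omega(f_2,f_3)$ and $\omega(f_1,f_3)$, and the same collapse of the non-degeneracy expression to $\omega_{16}^2(\omega_{14}\pm 2\omega_{15}+\omega_{25})$.

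The only substantive difference is how the new families are extracted. You propose, when $a_2=\pm 1$, to enumerate echelon shapes of the plane $\langle f_2,f_3\rangle\subset\langle e_3,e_4,e_5,e_6\rangle$ and rule them out one by one. The paper is more direct: it organizes the case split by whether $b_3=0$ or $b_3\neq 0$ (after arranging $c_3=0$), rather than by the value of $a_2$. When $b_3\neq 0$ (forcing $a_2=\pm 1$), the paper simply observes that $[f_1,f_2]=-b_3e_4-a_2b_3e_5-(b_4-a_2b_5)e_6\in\mathcal{F}$ has no $e_1$- or $e_3$-component, hence is independent of $f_1,f_2$ and may be taken as $f_3$ itself. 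After normalizing $f_2$ to $e_3+b_5e_5+b_6e_6$, this yields $f_3=e_4\pm e_5\mp b_5e_6$ in one stroke, bypassing any echelon enumeration. Splitting on $b_3$ rather than on $a_2$ also avoids re-deriving types (a), (b) inside the $a_2=\pm 1$ branch. Your automorphism $e_2,e_3,e_4,e_6\mapsto -e_2,-e_3,-e_4,-e_6$ reducing (d) to (c) is valid and is a small shortcut the paper does not use; it handles both signs in parallel instead.
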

\begin{proof}
Suppose that $\mathcal{F}$ has basis vectors $f_1,f_2$ as in Lemma \ref{lem:lie algebra subspace sum} (a). We get
\begin{align}
[f_1,f_2]&=-(b_2e_3+b_3e_4+(a_2b_3-a_3b_2)e_5+(b_4-a_2b_5+a_5b_2)e_6)\label{eq:L617-}\\
[f_1,[f_1,f_2]]&= b_2e_4+a_2b_2e_5+(b_3(1-a_2^2)+a_2a_3b_2)e_6.\nonumber
\end{align}
Lemma \ref{lem:subalg} implies that $b_2=0$ and $b_3(1-a_2^2)=0$. A third basis vector of $\mathcal{F}$ is then of the form
\begin{equation*}
f_3=c_3e_3+c_4e_4+c_5e_5+c_6e_6
\end{equation*}
with $c_3(1-a_2^2)=0$. There are two cases:
\begin{enumerate}
\item $b_3=0$ and $c_3=0$.
There are two subcases:
\begin{itemize}
\item $b_5=0$. If also $b_4=0$, then only $b_6\neq 0$ and $e_6\in\mathcal{F}$. This results in a basis of type (a) or (b). If $b_4\neq 0$, then equation \eqref{eq:L617-} shows that $e_6\in\mathcal{F}$. This results in a basis of type (a).
\item $b_5\neq 0$. The third basis vector of $\mathcal{F}$ is then of the form
\begin{equation*}
f_3=c_4e_4+c_6e_6.
\end{equation*}
If $c_4=0$, then $c_6\neq 0$ and $e_6\in\mathcal{F}$. This results in a basis of type (b). If $c_4\neq 0$, then the equation $[f_1,f_3]=-c_4e_6$ again implies that $e_6\in\mathcal{F}$. This case cannot occur, because then $\mathcal{F}$ had to be at least $4$-dimensional.
\end{itemize}
\item $b_3\neq 0$ and $c_3=0$. Then $a_2=\pm 1$. According to equation \eqref{eq:L617-}
\begin{equation*}
[f_1,f_2]=-b_3e_4-a_2b_3e_5-(b_4-a_2b_5)e_6.
\end{equation*}
It follows that we can assume that the third basis vector $f_3$ is $[f_1,f_2]$ up to a non-zero multiple, hence
\begin{equation*}
f_2=e_3+b_5e_5+b_6e_6,\quad f_3=e_4\pm e_5\mp b_5e_6.
\end{equation*}
This results in a basis of type (c) or (d).
\end{enumerate}
Suppose that the basis in (c) spans a Lagrangian subspace for a symplectic form as in Lemma \ref{lem:617-symp}. Then the following pairings of the first and third and second and third basis vector have to vanish:
\begin{equation*}
\omega_{14}+\omega_{15}-b_5\omega_{16}+\omega_{15}+\omega_{25}-b_5\omega_{26}=0,\quad -\omega_{26}-\omega_{16}=0,
\end{equation*}
hence
\begin{equation*}
\omega_{26}=-\omega_{16},\quad \omega_{14}+2\omega_{15}+\omega_{25}=0.
\end{equation*}
However, $\omega$ is symplectic if and only if $\omega_{16}^2(\omega_{14}+2\omega_{15}+\omega_{25})\neq 0$. This is a contradiction.

Suppose that the basis in (d) spans a Lagrangian subspace for a symplectic form as in Lemma \ref{lem:617-symp}. Then the following pairings of the first and third and second and third basis vector have to vanish:
\begin{equation*}
\omega_{14}-\omega_{15}+b_5'\omega_{16}-\omega_{15}+\omega_{25}-b_5'\omega_{26}=0,\quad -\omega_{26}+\omega_{16}=0,
\end{equation*}
hence
\begin{equation*}
\omega_{26}=\omega_{16},\quad \omega_{14}-2\omega_{15}+\omega_{25}=0.
\end{equation*}
However, $\omega$ is symplectic if and only if $\omega_{16}^2(\omega_{14}-2\omega_{15}+\omega_{25})\neq 0$. This is a contradiction.
\end{proof}

\begin{prop} The Lie algebra $L_{6,17}^-$ does not admit a bi-Lagrangian structure.
\end{prop}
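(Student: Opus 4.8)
The plan is to argue exactly as in the proofs already given for $L_{6,13}$, $L_{6,14}$ and $L_{6,17}^+$: combine the subalgebra restrictions of Lemma~\ref{lem:subalg}, Lemma~\ref{lem:lie algebra subspace sum} and Lemma~\ref{lem:sub L617-} with the bracket relations of $L_{6,17}^-$ recorded in~\eqref{eq:L617-}. Suppose, for contradiction, that $(\omega,\mathcal{F},\mathcal{G})$ is a bi-Lagrangian structure. By Lemma~\ref{lem:lie algebra subspace sum}(b) one of $\mathcal{F},\mathcal{G}$, say $\mathcal{F}$, has a basis vector with non-zero $e_1$-component. Since $\mathcal{F}$ is Lagrangian, Lemma~\ref{lem:sub L617-} forces $\mathcal{F}$ to have a basis of form (a) or (b) --- the forms (c) and (d) are excluded by that lemma --- and both (a) and (b) contain the vector $e_6$, so $e_6\in\mathcal{F}$. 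Applying the same reasoning to $\mathcal{G}$ shows $\mathcal{G}$ has no basis vector with non-zero $e_1$-component: otherwise $e_6\in\mathcal{G}$ too, contradicting $\mathcal{F}\cap\mathcal{G}=0$. Hence $\mathcal{G}\subset\langle e_2,\ldots,e_6\rangle$, and since $\mathcal{F}\oplus\mathcal{G}=\mathfrak{g}$ one can write down a basis $g_1,g_2,g_3$ of $\mathcal{G}$ explicitly in each of the two remaining cases for $\mathcal{F}$.

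In case (a), $\mathcal{F}=\langle e_1+a_2e_2+a_3e_3+a_5e_5,\ e_4,\ e_6\rangle$, and a complementary $\mathcal{G}$ has a basis $g_1=e_2+x_4e_4+x_6e_6$, $g_2=e_3+y_4e_4+y_6e_6$, $g_3=e_5+z_4e_4+z_6e_6$. The only nonzero brackets that enter are $[e_2,e_3]=-e_5$ and $[e_2,e_5]=e_6$, so $[g_1,g_2]=-e_5$. As $\mathcal{G}$ is a subalgebra this gives $e_5\in\mathcal{G}$, and then $[g_1,e_5]$ is a non-zero multiple of $e_6$ that lies in $\mathcal{G}$; since $e_6\in\mathcal{F}$ and $\mathcal{F}\cap\mathcal{G}=0$, this is a contradiction.

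In case (b), $\mathcal{F}=\langle e_1+a_2e_2+a_3e_3+a_4e_4,\ e_5+b_4e_4,\ e_6\rangle$, and a complementary $\mathcal{G}$ has a basis $g_1=e_2+s(e_5+b_4e_4)+te_6$, $g_2=e_3+u(e_5+b_4e_4)+ve_6$, $g_3=e_4+w(e_5+b_4e_4)+pe_6$. Here $[g_1,g_2]=-e_5+ue_6$, and requiring this to lie in $\mathcal{G}$ forces it to be a scalar multiple of $g_3$ (since $g_1,g_2$ carry the $e_2$- and $e_3$-terms while $g_3\in\langle e_4,e_5,e_6\rangle$); comparing $e_4$- and $e_5$-components then forces the $e_4$-coefficient $1+wb_4$ of $g_3$ to vanish, so $b_4\neq0$ and $g_3\in\langle e_5,e_6\rangle$ with non-zero $e_5$-coefficient. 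Consequently $[g_1,g_3]$ is again a non-zero multiple of $e_6$ lying in $\mathcal{G}$, contradicting $e_6\in\mathcal{F}$. As both cases are impossible, $L_{6,17}^-$ admits no bi-Lagrangian structure.

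The only step that is not purely mechanical is the bookkeeping in case (b): one has to carry the twist $e_5+b_4e_4$ through when producing a complement of $\mathcal{F}$ and then track whether the $e_4$-component of $g_3$ collapses to zero. This is the single genuine case distinction in the argument; everything else is forced, and in each branch iterated brackets inside $\mathcal{G}$ produce the deepest term $e_6$ of the lower central series of $L_{6,17}^-$, which Lemma~\ref{lem:sub L617-} has already placed in $\mathcal{F}$.
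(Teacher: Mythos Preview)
Your proof is correct and follows the same overall strategy as the paper: invoke Lemma~\ref{lem:sub L617-} to reduce the Lagrangian subalgebra $\mathcal{F}$ to types (a) or (b), observe that $e_6\in\mathcal{F}$ in either case so $\mathcal{G}$ cannot also contain an $e_1$-component, and then derive a contradiction from the bracket relations inside $\mathcal{G}$.

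The only real difference is in execution. The paper treats (a) and (b) simultaneously by writing the second basis vector of $\mathcal{F}$ as $b_4e_4+b_5e_5$ and then computing the single iterated bracket $[g_1,[g_1,g_2]]=-e_6\in\mathcal{F}$, which works uniformly because the only non-trivial brackets in $\langle e_2,\ldots,e_6\rangle$ are $[e_2,e_3]=-e_5$ and $[e_2,e_5]=e_6$. Your case (a) is effectively this same computation, but in case (b) you take a longer detour: you use $[g_1,g_2]\in\mathcal{G}$ to force $1+wb_4=0$, collapse $g_3$ into $\langle e_5,e_6\rangle$, and only then compute $[g_1,g_3]=we_6$. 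This is valid, but the bookkeeping you flag as ``the single genuine case distinction'' is avoidable: the double bracket $[g_1,[g_1,g_2]]=-e_6$ already gives the contradiction directly in case (b) without ever examining $g_3$ or the twist parameter $b_4$.
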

\begin{proof}
Suppose that $\mathcal{F},\mathcal{G}$ is a bi-Lagrangian structure. Considering the bases (a) and (b) in Lemma \ref{lem:sub L617-}, it follows that exactly one of $\mathcal{F},\mathcal{G}$ must have a basis with a non-zero $e_1$-component, say $\mathcal{F}$. Otherwise $\mathcal{F}$ and $\mathcal{G}$ cannot be complementary. 

In the both cases $\mathcal{F}$ has a basis of the form
\begin{equation*}
e_1+a_2e_2+a_3e_3+a_4e_4+a_5e_5,\quad b_4e_4+b_5e_5,\quad e_6.
\end{equation*}
It follows that $\mathcal{G}$ has a basis of the form
\begin{align*}
g_1&=e_2+x_4e_4+x_5e_5+x_6e_6\\
g_2&=e_3+y_4e_4+x_5e_5+y_6e_6\\
g_3&=z_4e_4+z_5e_5+z_6e_6.
\end{align*}
We get $[g_1,[g_1,g_2]]=-e_6\in\mathcal{F}$. This is a contradiction.
\end{proof}

\subsubsection{$L_{6,18}$, $L_{6,19}$, $L_{6,21}$}
\begin{lem}\label{lem:6,18,19,21}
Let $\mathfrak{g}$ be one of the Lie algebras $L_{6,18}$, $L_{6,19}$, $L_{6,21}$ and suppose that $\mathfrak{g}$ has two complementary $3$-dimensional subalgebras $\mathcal{F},\mathcal{G}$. Then one of the two subalgebras, say $\mathcal{F}$, has a basis of the form
\begin{equation*}
\textstyle e_1+\sum_{i=2}^4a_ie_i,\quad e_5,\quad e_6.
\end{equation*}
\end{lem}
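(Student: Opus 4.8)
\emph{Proof strategy.} The plan is to argue exactly as in the preceding subsections. By Lemma~\ref{lem:lie algebra subspace sum}(b), at least one of the two complementary subalgebras — say $\mathcal{F}$ — has a basis of the form $f_1=e_1+\sum_{i=2}^6 a_ie_i$, $f_2=\sum_{j=2}^6 b_je_j$, $f_3=\sum_{k=2}^6 c_ke_k$. The first step is to compute, for each of $L_{6,18}$, $L_{6,19}$, $L_{6,21}$ separately, the iterated brackets $[f_1,f_2]$ and $[f_1,[f_1,f_2]]$ (and, if the lower central series is long enough, $[f_1,[f_1,[f_1,f_2]]]$) using the structure constants in Table~\ref{table:structure}. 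Since every $3$-dimensional subalgebra $\mathfrak{h}$ is nilpotent with $[\mathfrak{h},[\mathfrak{h},\mathfrak{h}]]=0$ by Lemma~\ref{lem:subalg}, the vector $[f_1,[f_1,f_2]]$ must vanish; reading off its components in the basis $e_1,\dots,e_6$ forces several of the $b_j$ to be zero, and the same reasoning with $f_3$ in place of $f_2$, together with $[f_2,[f_1,f_2]]=0$, kills further coefficients of $f_2$ and $f_3$.

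For each of the three algebras the lower central series is long, running $\mathfrak{g}\supset[\mathfrak{g},\mathfrak{g}]\supset[\mathfrak{g},[\mathfrak{g},\mathfrak{g}]]\supset\cdots$ down to the line spanned by $e_6$, and this is precisely what makes the nilpotency constraint bite. I expect the resulting conditions to force $f_2$ and $f_3$ to have vanishing $e_2,e_3,e_4$-components, i.e.~$f_2,f_3\in\langle e_5,e_6\rangle$; since $\mathcal{F}$ is $3$-dimensional this means $\langle e_5,e_6\rangle\subset\mathcal{F}$, and subtracting the $e_5,e_6$-parts off $f_1$ puts the basis of $\mathcal{F}$ into the asserted form $e_1+a_2e_2+a_3e_3+a_4e_4$, $e_5$, $e_6$. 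Along the way one must discard the over-determined sub-cases in which, say, $f_2$ retains a non-zero $e_3$- or $e_4$-component: as in the proofs of Lemmas~\ref{lem:617+} and~\ref{lem:sub L617-}, such a case produces a bracket of the form $[f_1,f_3]=-c\,e_6$ with $c\neq0$, forcing $e_6\in\mathcal{F}$ on top of three independent vectors already present, hence $\dim\mathcal{F}\geq4$, a contradiction.

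\emph{Main obstacle.} The difficulty is bookkeeping rather than conceptual: $L_{6,18}$, $L_{6,19}$ and $L_{6,21}$ have genuinely different structure constants, so the bracket formulas and the vanishing conditions they impose on $(b_j)$ and $(c_k)$ differ in detail, and the argument has to be run three times. It is worth organizing the computation so that the single common feature — a lower central series terminating in $\langle e_6\rangle$, which forces the relevant coefficients of $f_2,f_3$ to die — is isolated once, with only the short per-algebra case splits treated individually; everything else is the routine linear algebra of comparing coefficients, in the same style as the earlier lemmas.
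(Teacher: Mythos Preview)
Your approach is correct and is exactly the paper's: pick the subalgebra $\mathcal{F}$ containing an $e_1$-component via Lemma~\ref{lem:lie algebra subspace sum}, compute $[f_1,[f_1,f_2]]$, and invoke Lemma~\ref{lem:subalg}. The actual computation is shorter than you anticipate: in each of the three algebras the single vector $[f_1,[f_1,f_2]]$ already has $e_4$-, $e_5$-, $e_6$-components that (triangularly) force $b_2=b_3=b_4=0$ outright, so the same holds for $f_3$, and $\langle e_5,e_6\rangle\subset\mathcal{F}$ follows with no further brackets, no $[f_2,[f_1,f_2]]$, and none of the over-determined sub-cases you mention.
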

\begin{proof}
Suppose that $\mathcal{F}$ has basis vectors $f_1,f_2$ as in Lemma \ref{lem:lie algebra subspace sum} (a).

In $L_{6,18}$ we get $[f_1,[f_1,f_2]] = b_2e_4+b_3e_5+b_4e_6$. Lemma \ref{lem:subalg} implies that $b_2=b_3=b_4=0$, hence the claim.

In $L_{6,19}$ we get 
\begin{equation*}
[f_1,[f_1,f_2]] = b_2e_4+b_3e_5+(b_4+a_2b_2)e_6.
\end{equation*}
Again we have $b_2=b_3=b_4=0$.

Finally, in $L_{6,21}$ we get 
\begin{equation*}
[f_1,[f_1,f_2]] = b_2e_4+(b_3+a_2b_2)e_5+(b_4+2a_2b_3-a_3b_2)e_6.
\end{equation*}
Again $b_2=b_3=b_4=0$.
\end{proof}

\begin{prop}
The Lie algebras $L_{6,18}$, $L_{6,19}$ and $L_{6,21}$ do not admit a bi-Lagrangian structure.
\end{prop}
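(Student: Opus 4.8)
The plan is to use Lemma~\ref{lem:6,18,19,21} to pin down the shape of one of the two Lagrangian subalgebras and then show that this shape is incompatible with being $\omega$-isotropic, for \emph{every} symplectic $\omega$ on $\mathfrak{g}$. So suppose one of $\mathfrak{g}=L_{6,18},L_{6,19},L_{6,21}$ carried a bi-Lagrangian structure $(\omega,\mathcal{F},\mathcal{G})$. By Lemma~\ref{lem:6,18,19,21} we may assume, after relabelling, that $\mathcal{F}$ has a basis
\begin{equation*}
f_1=e_1+a_2e_2+a_3e_3+a_4e_4,\qquad f_2=e_5,\qquad f_3=e_6 .
\end{equation*}
It then suffices to prove that $\omega(f_1,e_6)\neq 0$ for any symplectic form $\omega$, since this contradicts $\mathcal{F}$ being Lagrangian.

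For this I would first read off from the structure constants in Table~\ref{table:structure} that in each of the three algebras $e_6$ is central, i.e.\ no differential $d\alpha_i$ involves $\alpha_6$. Non-degeneracy of $\omega$ then forces $i_{e_6}\omega\neq 0$, so $\omega_{j6}\neq 0$ for at least one index $j$. Next I would compute the space of closed $2$-forms by expanding $d\omega=\sum_{i<j}\omega_{ij}\,d\alpha_{ij}$ and setting the coefficient of each basis $3$-form equal to zero. In each algebra this system forces $\omega_{26}=\omega_{36}=\omega_{46}=\omega_{56}=0$: the $3$-forms of the type $\alpha_{1jk}$ that occur in $d\alpha_{\ell 6}$ for $\ell\ge 2$ are each produced by only finitely many terms of $d\omega$, and chasing these relations (together with the relations coming from the $d\alpha_{\ell m}$ with $m\le 5$) kills every $\omega_{\ell 6}$ with $\ell\ge 2$. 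Consequently $\omega_{16}\neq 0$, and hence
\begin{equation*}
\omega(f_1,e_6)=\omega_{16}+a_2\omega_{26}+a_3\omega_{36}+a_4\omega_{46}=\omega_{16}\neq 0,
\end{equation*}
which is the required contradiction; so none of $L_{6,18},L_{6,19},L_{6,21}$ admits a bi-Lagrangian structure.

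The only real work is the closed-form bookkeeping of the previous paragraph, and the main obstacle is that it has to be carried out separately for the three algebras, since they differ in the top relations $d\alpha_5,d\alpha_6$ (reflected in the extra summands visible in the double-bracket formulas $[f_1,[f_1,f_2]]$ of Lemma~\ref{lem:6,18,19,21}). Those extra summands introduce couplings between the coefficients $\omega_{\ell 6}$ and coefficients $\omega_{km}$ with $m\le 5$, so one must check in each case that no such coupling lets a closed form have $\omega_{\ell 6}\neq 0$ for some $2\le\ell\le 4$. I expect this to be routine but slightly tedious. Should it happen that some $\omega_{\ell 6}$ with $\ell\ge 2$ survives in a closed form on one of the algebras, one falls back on analysing $\omega$ restricted to the complementary subalgebra $\mathcal{G}$, whose shape is determined by $\mathfrak{g}=\mathcal{F}\oplus\mathcal{G}$ together with the subalgebra condition of Lemma~\ref{lem:subalg} applied to $\mathcal{G}$, exactly as in the proofs for $L_{6,13}$ and $L_{6,14}$ above.
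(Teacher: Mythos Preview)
Your plan is correct and is essentially the paper's own argument: the paper computes the general closed $2$-form on each of $L_{6,18}$, $L_{6,19}$, $L_{6,21}$ explicitly, observes that the only surviving coefficient involving the index $6$ is $\omega_{16}$ (confirming your claim that closedness kills $\omega_{26},\omega_{36},\omega_{46},\omega_{56}$), notes that symplecticity forces $\omega_{16}\neq 0$, and then invokes Lemma~\ref{lem:6,18,19,21} to conclude that $\mathcal{F}$ cannot be Lagrangian. Your fallback via $\mathcal{G}$ is never needed; the closed-form bookkeeping goes through cleanly in all three cases.
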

\begin{proof}
Any closed $2$-form on $L_{6,18}$ is of the form
\begin{equation*}
\omega=\omega_{12}\alpha_{12}+\omega_{13}\alpha_{13}+\omega_{23}\alpha_{23}+\omega_{14}\alpha_{14}+\omega_{34}\alpha_{34}+\omega_{15}\alpha_{15}-\omega_{34}\alpha_{25}+\omega_{16}\alpha_{16}.
\end{equation*}
Any closed $2$-form on $L_{6,19}$ is of the form
\begin{align*}
\omega&=\omega_{12}\alpha_{12}+\omega_{13}\alpha_{13}+\omega_{23}\alpha_{23}+\omega_{14}\alpha_{14}+\omega_{16}\alpha_{24}+\omega_{34}\alpha_{34}+\omega_{15}\alpha_{15}\\
&\quad-\omega_{34}\alpha_{25}+\omega_{16}\alpha_{16}.
\end{align*}
Finally, any closed $2$-form on $L_{6,21}$ is of the form 
\begin{align*}
\omega&=\omega_{12}\alpha_{12}+\omega_{13}\alpha_{13}+\omega_{23}\alpha_{23}+\omega_{14}\alpha_{14}+\omega_{24}\alpha_{24}+\omega_{34}\alpha_{34}+\omega_{24}\alpha_{15}\\
&\quad+(\omega_{16}-\omega_{34})\alpha_{25}+\omega_{16}\alpha_{16}.
\end{align*}
If the form $\omega$ is symplectic, then in each case $\omega_{16}\neq 0$. It follows that the subalgebra $\mathcal{F}$ in Lemma \ref{lem:6,18,19,21} cannot be Lagrangian.
\end{proof}

\subsection{Calculation of the curvature of the bi-Lagrangian structures in Table \ref{table:symp}}

We now calculate the canonical connection and the curvature for the examples of bi-Lagrangian structures in Table \ref{table:symp}, see Table \ref{table:curvature} for a summary. It turns out that all $16$ examples are Ricci-flat (and thus yield para-K\"ahler analogues of Calabi--Yau manifolds), $8$ of which are flat and $8$ non-flat.

\begin{rem}
The calculations of the canonical connection can be checked with the statement in Remark \ref{rem:canonical charact}.
\end{rem}

\subsubsection{$A_6$}
A simple calculation shows that the canonical connection $\nabla$ is trivial in the basis $e_1,e_2,e_3,e_4,e_5,e_6$ and $R=0$. 

\subsubsection{$L_3\oplus A_3$}
\begin{equation*}
\nabla_{e_1}e_1=-e_3,\quad \nabla_{e_1}e_2=-e_6.
\end{equation*}
\begin{equation*}
R=0.
\end{equation*}

\subsubsection{$L_{5,2}\oplus A_1$}
\begin{equation*}
\nabla_{e_1}e_1=-e_4,\quad \nabla_{e_3}e_3=e_5,\quad \nabla_{e_3}e_1=e_6,\quad \nabla_{e_1}e_2=-e_5.
\end{equation*}
\begin{equation*}
R=0.
\end{equation*}

\subsubsection{$L_3\oplus L_3$}
\begin{equation*}
\nabla_{e_1}e_1=-e_4,\quad \nabla_{e_3}e_3=e_2,\quad \nabla_{e_3}e_4=-e_6,\quad \nabla_{e_1}e_2=-e_5.
\end{equation*}
\begin{equation*}
R(e_1,e_3)e_3=-e_5,\quad R(e_3,e_1)e_1=e_6.
\end{equation*}
\begin{equation*}
\mathrm{Ric}=0.
\end{equation*}

\subsubsection{$L_{6,1}$}
\begin{equation*}
\nabla_{e_1}e_1=e_2,\quad \nabla_{e_2}e_1=e_5,\quad \nabla_{e_1}e_3=-e_6,\quad \nabla_{e_2}e_4=-e_6.
\end{equation*}
\begin{equation*}
R=0.
\end{equation*}

\subsubsection{$L_{6,2}$}
\begin{align*}
\nabla_{e_1}e_1&=e_3,\quad \nabla_{e_3}e_1=e_5,\quad \nabla_{e_1+e_2}(e_1+e_2)=-2e_4\\
\nabla_{e_4}(e_1+e_2)&=-(e_5-e_6),\quad \nabla_{e_4}e_1=e_5,\quad \nabla_{e_1+e_2}e_3=-2e_5\\
\nabla_{e_3}(e_1+e_2)&=-(e_5-e_6),\quad \nabla_{e_1}e_4=e_5-e_6.
\end{align*}
\begin{equation*}
R(e_1,e_1+e_2)(e_1+e_2)=-2(e_5-e_6),\quad R(e_1+e_2,e_1)e_1=-2e_5.
\end{equation*}
\begin{equation*}
\mathrm{Ric}=0.
\end{equation*}

\subsubsection{$L_{5,3}\oplus A_1$}
\begin{align*}
\nabla_{e_1}e_1&=-e_3,\quad \nabla_{e_1}e_3=e_4,\quad \nabla_{e_3}e_1=e_4\\
\nabla_{e_1}e_2&=-e_5,\quad \nabla_{e_1}e_5=-e_6,\quad \nabla_{e_3}e_2=e_6.
\end{align*}
\begin{equation*}
R=0.
\end{equation*}

\subsubsection{$L_{6,4}$}
\begin{equation*}
\nabla_{e_1}e_2=-2e_4,\quad \nabla_{e_2}e_1=-e_4,\quad \nabla_{e_1}e_3=-e_5,\quad \nabla_{e_2}e_3=-e_6.
\end{equation*}
\begin{equation*}
R=0.
\end{equation*}

\subsubsection{$L_{6,5}$}
\begin{equation*}
\nabla_{e_1}e_3=-e_5,\quad \nabla_{e_1}e_1=e_3,\quad \nabla_{e_1}e_2=-e_4,\quad \nabla_{e_1}e_4=-e_6.
\end{equation*}
\begin{equation*}
R=0.
\end{equation*}

\subsubsection{$L_{6,6}$}
\begin{align*}
\nabla_{e_1-e_2}e_3&=-e_5,\quad \nabla_{e_1-e_2}(e_1-e_2)=-e_3,\quad \nabla_{e_1+e_5}(e_1+e_5)=-e_4\\
\nabla_{e_1+e_5}e_3&=-e_5,\quad \nabla_{e_1-e_2}(e_1+e_5)=-e_4,\quad \nabla_{e_1-e_2}e_4=e_6.
\end{align*}
\begin{equation*}
R(e_1+e_5,e_1-e_2)(e_1-e_2)=e_5,\quad R(e_1-e_2,e_1+e_5)(e_1+e_5)=-e_6.
\end{equation*}
\begin{equation*}
\mathrm{Ric}=0.
\end{equation*}

\subsubsection{$L_{6,9}$}
\begin{align*}
\nabla_{e_1}e_3&=-\frac{1}{2}e_5,\quad \nabla_{e_1}e_1=e_3,\quad \nabla_{e_3}e_1=\frac{1}{2}e_5\\
\nabla_{e_3}e_2&=e_6,\quad \nabla_{e_1}e_2=-e_4,\quad \nabla_{e_1}e_4=-e_6.
\end{align*}
\begin{equation*}
R=0.
\end{equation*}

\subsubsection{$L_{6,10}$}
\begin{align*}
\nabla_{e_4}e_1&=e_5,\quad \nabla_{e_2-e_4}(e_3+e_5)=-e_6,\quad \nabla_{e_2-e_4}(e_2-e_4)=-e_6\\
\nabla_{e_2-e_4}e_1&=e_4-e_5,\quad \nabla_{e_4}(e_2-e_4)=e_6.
\end{align*}
\begin{equation*}
R(e_2-e_4,e_1)e_1=-e_5,\quad R(e_1,e_2-e_4)(e_2-e_4)=e_6.
\end{equation*}
\begin{equation*}
\mathrm{Ric}=0.
\end{equation*}

\subsubsection{$L_{6,11}$}
\begin{align*}
\nabla_{e_1}e_1&=e_4,\quad \nabla_{e_4}e_1=e_5,\quad \nabla_{e_1}(e_3+e_5)=-e_6\\
\nabla_{e_1+e_2-e_4}(e_3+e_5)&=-e_6,\quad \nabla_{e_1+e_2-e_4}e_1=e_4-e_5,\quad \nabla_{e_1+e_2-e_4}e_4=-e_5\\
\nabla_{e_4}(e_1+e_2-e_4)&=e_6,\quad \nabla_{e_1+e_2-e_4}(e_1+e_2-e_4)=-(e_3+e_5)-e_6.
\end{align*}
\begin{equation*}
R(e_1+e_2-e_4,e_1)e_1=-2e_5,\quad R(e_1,e_1+e_2-e_4)(e_1+e_2-e_4)=2e_6.
\end{equation*}
\begin{equation*}
\mathrm{Ric}=0.
\end{equation*}

\subsubsection{$L_{6,12}$}
\begin{align*}
\nabla_{e_2}e_3&=-7e_5,\quad \nabla_{e_2}e_2=-\frac{6}{7}e_3,\quad \nabla_{e_3}e_2=-6e_5\\
\nabla_{e_2-2e_1}(e_3-e_4)&=\frac{7}{3}(-3e_5+e_6),\quad \nabla_{e_2-2e_1}(e_2-2e_1)=-\frac{10}{7}(e_3-e_4)\\
\nabla_{e_3-e_4}(e_2-2e_1)&=\frac{4}{3}(-3e_5+e_6),\quad \nabla_{e_2-2e_1}e_2=2e_3\\
\nabla_{e_3-e_4}e_2&=4e_5,\quad \nabla_{e_2-2e_1}e_3=5e_5,\quad \nabla_{e_2}(e_2-2e_1)=2(e_3-e_4)\\
\nabla_{e_3}(e_2-2e_1)&=-2(-3e_5+e_6),\quad \nabla_{e_2}(e_3-e_4)=-(-3e_5+e_6).
\end{align*}
\begin{align*}
R(e_2-2e_1,e_2)e_2&=\frac{208}{7}e_5\\
R(e_2,e_2-2e_1)(e_2-2e_1)&=-\frac{208}{21}(-3e_5+e_6).\\
\mathrm{Ric}&=0.
\end{align*}

\subsubsection{$L_{5,4}\oplus A_1$}
\begin{equation*}
\nabla_{e_1}e_5=-e_6,\quad \nabla_{e_4}e_4=-e_3,\quad \nabla_{e_4}e_1=e_5,\quad \nabla_{e_1}e_2=-e_4.
\end{equation*}
\begin{align*}
R(e_4,e_1)e_1&=e_6,\quad R(e_1,e_4)e_2=-e_3\\
R(e_2,e_1)e_1&=-e_5,\quad R(e_1,e_2)e_4=-e_3.
\end{align*}
\begin{equation*}
\mathrm{Ric}=0.
\end{equation*}

\subsubsection{$L_{6,16}$}
\begin{align*}
\nabla_{e_1}e_4&=-e_6,\quad \nabla_{e_3}e_3=-e_5,\quad \nabla_{e_3}e_2=e_5\\
\nabla_{e_3}e_1&=e_4,\quad \nabla_{e_1}e_2=-e_3.
\end{align*}
\begin{align*}
R(e_3,e_1)e_1&=e_6,\quad R(e_1,e_3)e_2=-e_5,\quad R(e_1,e_2)e_3=-e_5\\
R(e_1,e_2)e_2&=e_5,\quad R(e_2,e_1)e_1=-e_4.
\end{align*}
\begin{equation*}
\mathrm{Ric}=0.
\end{equation*}

\section{Appendix}

\begin{table}[!htbp]
\setlength{\tabcolsep}{0.3cm}
\renewcommand{\arraystretch}{1.5}
  \begin{tabular}{|c|cc|c|c|} \hline
	{\bf Algebraic structure} & \multicolumn{2}{c|}{{\bf Structure constants}} & {\bf Symplectic form} & {\bf Bi-Lagrangian structure}\\ \hline
	& $d\alpha_3$ & $d\alpha_4$ & & \\ \hline \hline
	$A_4$ & $0$ & $0$ & $\alpha_{12}+\alpha_{34}$ & $\{e_1,e_3\},\{e_2,e_4\}$ \\ \hline
	$L_3\oplus A_1$ ($\mathfrak{nil}_3\oplus\mathbb{R}$) & $0$ & $\alpha_{12}$ & $\alpha_{14}+\alpha_{23}$ & $\{e_1,e_3\},\{e_2,e_4\}$ \\ \hline
	$L_4$ ($\mathfrak{nil}_4$) & $\alpha_{12}$ & $\alpha_{13}$ & $\alpha_{14}+\alpha_{23}$  & not bi-Lagrangian \\ \hline
  \end{tabular}
	\caption{$4$-dimensional nilpotent Lie algebras ($d\alpha_i=0$ for $i=1,2$)}
\label{table:4-dim}
\end{table}

\begin{table}[!htbp]
\setlength{\tabcolsep}{0.3cm}
\renewcommand{\arraystretch}{1.5}
  \begin{tabular}{|cc|ccc|} \hline
	\multicolumn{2}{|c|}{{\bf Betti numbers}} & \multicolumn{3}{c|}{{\bf Algebraic structure}} \\ \hline
	$b_1$ & $b_2$ & Salamon & Khakimdjanov et al. & Bazzoni--Mu\~noz \\ \hline \hline
	$6$ & $15$ & $(0,0,0,0,0,0)$  & $26$ & $A_6$ \\ \hline
	$5$ & $11$ & $(0,0,0,0,0,12)$ & $25$ & $L_3\oplus A_3$ \\ \hline
	$4$ & $9$ & $(0,0,0,0,12,13)$ & $23$ & $L_{5,2}\oplus A_1$ \\ \hline
	$4$ & $8$ & $(0,0,0,0,12,34)$ & $24$ & $L_3\oplus L_3$ \\ \hline
	$4$ & $8$ & $(0,0,0,0,12,14+23)$ & $17$ & $L_{6,1}$ \\ \hline
	$4$ & $8$ & $(0,0,0,0,13+42,14+23)$ & $16$ & $L_{6,2}$ \\ \hline
	$4$ & $7$ & $(0,0,0,0,12,15)$ & $22$ & $L_4\oplus A_2$ \\ \hline
	$4$ & $7$ & $(0,0,0,0,12,14+25)$ & $21$ & $L_{5,3}\oplus A_1$ \\ \hline
	$3$ & $8$ & $(0,0,0,12,13,23)$ & $18$ & $L_{6,4}$ \\ \hline
	$3$ & $6$ & $(0,0,0,12,13,14)$ & $14$ & $L_{6,5}$ \\ \hline
	$3$ & $6$ & $(0,0,0,12,13,24)$ & $15$ & $L_{6,6}$ \\ \hline
	$3$ & $6$ & $(0,0,0,12,13,14+23)$ & $13$ & $L_{6,9}$ \\ \hline
	$3$ & $5$ & $(0,0,0,12,13+14,24)$ & $11$ & $L_{6,10}$ \\ \hline
	$3$ & $5$ & $(0,0,0,12,14,13+42)$ & $10$ & $L_{6,11}$ \\ \hline
	$3$ & $5$ & $(0,0,0,12,13+42,14+23)$ & $12$ & $L_{6,12}$ \\ \hline
	$3$ & $5$ & $(0,0,0,12,14,15)$ & $19$ & $L_{5,4}\oplus A_1$ \\ \hline
	$3$ & $5$ & $(0,0,0,12,14,15+23)$ & $9$ & $L_{6,13}$ \\ \hline
	$3$ & $5$ & $(0,0,0,12,14,15+24)$ & $20$ & $L_{5,6}\oplus A_1$ \\ \hline
	$3$ & $5$ & $(0,0,0,12,14,15+23+24)$ & $7$ & $L_{6,14}$ \\ \hline
	$3$ & $4$ & $(0,0,0,12,14-23,15+34)$ & $8$ & $L_{6,15}$ \\ \hline
	$2$ & $4$ & $(0,0,12,13,23,14)$ & $6$ & $L_{6,16}$ \\ \hline
	$2$ & $4$ & $(0,0,12,13,23,14+25)$ & $4$ & $L_{6,17}^+$ \\ \hline
	$2$ & $4$ & $(0,0,12,13,23,14-25)$ & $5$ & $L_{6,17}^-$ \\ \hline
	$2$ & $3$ & $(0,0,12,13,14,15)$ & $3$ & $L_{6,18}$ \\ \hline
	$2$ & $3$ & $(0,0,12,13,14,23+15)$ & $2$ & $L_{6,19}$ \\ \hline
	$2$ & $3$ & $(0,0,12,13,14+23,24+15)$ & $1$ & $L_{6,21}$ \\ \hline
  \end{tabular}
	\caption{Comparison of notation for $6$-dimensional symplectic nilpotent Lie algebras in \cite{S}, \cite{KGM} and \cite{BM}}
\label{table:compare}
\end{table}

\begin{table}[!htbp]
\setlength{\tabcolsep}{0.3cm}
\renewcommand{\arraystretch}{1.5}
  \begin{tabular}{|c|cccc|} \hline
	{\bf Algebraic structure} & \multicolumn{4}{c|}{{\bf Structure constants}}\\ \hline
	& $d\alpha_3$ & $d\alpha_4$ & $d\alpha_5$ & $d\alpha_6$  \\ \hline \hline
	$A_6$ & $0$ & $0$ & $0$ & $0$\\ \hline
	$L_3\oplus A_3$ & $0$ & $0$ & $0$ & $\alpha_{12}$\\ \hline
	$L_{5,2}\oplus A_1$ & $0$ & $0$ & $\alpha_{12}$ & $\alpha_{13}$ \\ \hline
	$L_3\oplus L_3$ & $0$ & $0$ & $\alpha_{12}$ & $\alpha_{34}$\\ \hline
	$L_{6,1}$ & $0$ & $0$ & $\alpha_{12}$ & $\alpha_{13}+\alpha_{24}$ \\ \hline
	$L_{6,2}$ & $0$ & $0$ & $\alpha_{13}-\alpha_{24}$ & $\alpha_{14}+\alpha_{23}$  \\ \hline
	$L_4\oplus A_2$ & $0$ & $0$ & $\alpha_{12}$ & $\alpha_{15}$   \\ \hline
	$L_{5,3}\oplus A_1$ & $0$ & $0$ & $\alpha_{12}$ & $\alpha_{15}+\alpha_{23}$  \\ \hline
	$L_{6,4}$ & $0$ & $\alpha_{12}$ & $\alpha_{13}$ & $\alpha_{23}$  \\ \hline
	$L_{6,5}$ & $0$ & $\alpha_{12}$ & $\alpha_{13}$ & $\alpha_{14}$  \\ \hline
	$L_{6,6}$ & $0$ & $\alpha_{12}$ & $\alpha_{13}$ & $\alpha_{24}$ \\ \hline
	$L_{6,9}$ & $0$ & $\alpha_{12}$ & $\alpha_{13}$ & $\alpha_{14}+\alpha_{23}$  \\ \hline
	$L_{6,10}$ & $0$ & $\alpha_{12}$ & $\alpha_{14}$ & $\alpha_{23}+\alpha_{24}$  \\ \hline
	$L_{6,11}$ & $0$ & $\alpha_{12}$ & $\alpha_{14}$ & $\alpha_{13}+\alpha_{24}$  \\ \hline
  $L_{6,12}$ & $0$ & $\alpha_{12}$ & $\alpha_{14}+\alpha_{23}$ & $\alpha_{13}-\alpha_{24}$  \\ \hline
	$L_{5,4}\oplus A_1$ & $0$ & $\alpha_{12}$ & $\alpha_{14}$ & $\alpha_{15}$ \\ \hline
	$L_{6,13}$ & $0$ & $\alpha_{12}$ & $\alpha_{14}$ & $\alpha_{15}+\alpha_{23}$  \\ \hline
	$L_{5,6}\oplus A_1$ & $0$ & $\alpha_{12}$ & $\alpha_{14}$ & $\alpha_{15}+\alpha_{24}$ \\ \hline
	$L_{6,14}$ & $0$ & $\alpha_{12}$ & $\alpha_{14}$ & $\alpha_{15}+\alpha_{23}+\alpha_{24}$   \\ \hline
	$L_{6,15}$ & $0$ & $\alpha_{12}$ & $\alpha_{14}+\alpha_{23}$ & $\alpha_{15}-\alpha_{34}$  \\ \hline
	$L_{6,16}$  & $\alpha_{12}$ & $\alpha_{13}$  & $\alpha_{23}$ & $\alpha_{14}$  \\ \hline
	$L_{6,17}^+$  & $\alpha_{12}$ & $\alpha_{13}$ & $\alpha_{23}$ & $\alpha_{14}+\alpha_{25}$ \\ \hline
	$L_{6,17}^-$ & $\alpha_{12}$ & $\alpha_{13}$ & $\alpha_{23}$ & $\alpha_{14}-\alpha_{25}$ \\ \hline
	$L_{6,18}$ & $\alpha_{12}$ & $\alpha_{13}$ & $\alpha_{14}$ & $\alpha_{15}$ \\ \hline
	$L_{6,19}$ & $\alpha_{12}$ & $\alpha_{13}$ & $\alpha_{14}$ & $\alpha_{15}+\alpha_{23}$ \\ \hline
	$L_{6,21}$ & $\alpha_{12}$ & $\alpha_{13}$ & $\alpha_{14}+\alpha_{23}$ & $\alpha_{15}+\alpha_{24}$ \\ \hline
  \end{tabular}
	\caption{Structure constants ($d\alpha_i=0$ for $i=1,2$)}
\label{table:structure}
\end{table}

\begin{table}[!htbp]
\setlength{\tabcolsep}{0.3cm}
\renewcommand{\arraystretch}{1.5}
  \begin{tabular}{|c|c|c|} \hline
	{\bf Algebraic structure} & {\bf Symplectic form} & {\bf Bi-Lagrangian structure}\\ \hline \hline
	$A_6$ & $\alpha_{12}+\alpha_{34}+\alpha_{56}$ (BM) & $\{e_1,e_3,e_5\},\{e_2,e_4,e_6\}$\\ \hline
	$L_3\oplus A_3$ & $\alpha_{16}+\alpha_{23}+\alpha_{45}$ (BM) & $\{e_1,e_3,e_4\},\{e_2,e_5,e_6\}$ \\ \hline
	$L_{5,2}\oplus A_1$ & $\alpha_{15}+\alpha_{24}+\alpha_{36}$ (BM) & $\{e_1,e_4,e_6\},\{e_2,e_3,e_5\}$ \\ \hline
	$L_3\oplus L_3$ & $\alpha_{15}+\alpha_{24}+\alpha_{36}$ (BM) & $\{e_1,e_4,e_6\},\{e_2,e_3,e_5\}$ \\ \hline
	$L_{6,1}$ & $\alpha_{16}+\alpha_{23}-\alpha_{45}$ (K) & $\{e_1,e_2,e_5\},\{e_3,e_4,e_6\}$\\ \hline
	$L_{6,2}$ & $\alpha_{16}+\alpha_{25}+\alpha_{34}$ (BM) & $\{e_1,e_3,e_5\},\{e_1+e_2,e_4,e_5-e_6\}$ \\ \hline
	$L_4\oplus A_2$ & $\alpha_{16}+\alpha_{25}+\alpha_{34}$ (BM) & not bi-Lagrangian  \\ \hline
	$L_{5,3}\oplus A_1$ & $\alpha_{16}+\alpha_{24}-\alpha_{35}$ (BM) & $\{e_1,e_3,e_4\},\{e_2,e_5,e_6\}$ \\ \hline
	$L_{6,4}$ & $\alpha_{16}+2\alpha_{25}+\alpha_{34}$ (K) & $\{e_1,e_2,e_4\},\{e_3,e_5,e_6\}$ \\ \hline
	$L_{6,5}$ & $\alpha_{16}+\alpha_{25}+\alpha_{34}$ (K) & $\{e_1,e_3,e_5\},\{e_2,e_4,e_6\}$ \\ \hline
	$L_{6,6}$ & $\alpha_{15}+\alpha_{25}-\alpha_{26}+\alpha_{34}$ (K) & $\{e_1-e_2,e_3,e_5\},\{e_1+e_5,e_4,e_6\}$ \\ \hline
	$L_{6,9}$ & $\alpha_{16}+2\alpha_{25}+\alpha_{34}$ (BM) & $\{e_1,e_3,e_5\},\{e_2,e_4,e_6\}$ \\ \hline
	$L_{6,10}$ & $\alpha_{16}+\alpha_{25}-\alpha_{34}$ (BM) & $\{e_1,e_4,e_5\},\{e_2-e_4,e_3+e_5,e_6\}$ \\ \hline
	$L_{6,11}$ & $\alpha_{16}+\alpha_{25}-\alpha_{26}-\alpha_{34}$ (K) & $\{e_1,e_4,e_5\},\{e_1+e_2-e_4,e_3+e_5,e_6\}$  \\ \hline
  $L_{6,12}$ & $-\alpha_{15}+6\alpha_{26}+7\alpha_{34}$ (K) & $\{e_2,e_3,e_5\},\{e_2-2e_1,e_3-e_4,-3e_5+e_6\}$ \\ \hline
	$L_{5,4}\oplus A_1$ & $\alpha_{13}+\alpha_{26}-\alpha_{45}$ (BM) & $\{e_1,e_5,e_6\},\{e_2,e_3,e_4\}$ \\ \hline
	$L_{6,13}$ & $\alpha_{13}+\alpha_{26}-\alpha_{45}$ (BM) & not bi-Lagrangian \\ \hline
	$L_{5,6}\oplus A_1$ & $\alpha_{13}+\alpha_{26}-\alpha_{45}$ (BM) & not bi-Lagrangian \\ \hline
	$L_{6,14}$ & $\alpha_{13}+\alpha_{26}-\alpha_{45}$ (BM) & not bi-Lagrangian \\ \hline
	$L_{6,15}$ & $\alpha_{16}+\alpha_{24}-\alpha_{35}$ (K) &  not bi-Lagrangian \\ \hline
	$L_{6,16}$  & $\alpha_{15}+\alpha_{24}+\alpha_{26} -\alpha_{34}$ (K) & $\{e_1,e_4,e_6\},\{e_2,e_3,e_5\}$ \\ \hline
	$L_{6,17}^+$  & $\alpha_{16}+\alpha_{15}+\alpha_{24} + \alpha_{35}$ (BM) & not bi-Lagrangian \\ \hline
	$L_{6,17}^-$ & $\alpha_{15}-\alpha_{16}+\alpha_{24}+\alpha_{35}$ (K) & not bi-Lagrangian \\ \hline
	$L_{6,18}$ & $\alpha_{16}+\alpha_{25}-\alpha_{34}$ (BM) &  not bi-Lagrangian \\ \hline
	$L_{6,19}$ & $\alpha_{16}+\alpha_{24}+\alpha_{25}-\alpha_{34}$ (BM) & not bi-Lagrangian \\ \hline
	$L_{6,21}$ & $2\alpha_{16}+\alpha_{25}+\alpha_{34}$ (BM) &  not bi-Lagrangian \\ \hline
  \end{tabular}
	\caption{Symplectic and bi-Lagrangian structures ((BM) and (K) indicate that the symplectic form is taken from \cite{BM} and \cite{KGM}, respectively; note Remark \ref{rem:errors})}
\label{table:symp}
\end{table}

\begin{table}[!htbp]
\setlength{\tabcolsep}{0.3cm}
\renewcommand{\arraystretch}{1.5}
  \begin{tabular}{|c|c|} \hline
	{\bf Algebraic structure} & {\bf Curvature tensor $R$}\\ \hline\hline
	$A_6$ & $0$ \\ \hline
	$L_3\oplus A_3$ & $0$ \\ \hline
	$L_{5,2}\oplus A_1$ & $0$ \\ \hline
	$L_3\oplus L_3$ & $\neq 0$ \\ \hline
	$L_{6,1}$ & $0$   \\ \hline
	$L_{6,2}$ & $\neq 0$ \\ \hline
	$L_{5,3}\oplus A_1$ & $0$   \\ \hline
	$L_{6,4}$ & $0$   \\ \hline
	$L_{6,5}$ & $0$  \\ \hline
	$L_{6,6}$ & $\neq 0$ \\ \hline
	$L_{6,9}$ & $0$    \\ \hline
	$L_{6,10}$ & $\neq 0$   \\ \hline
	$L_{6,11}$ & $\neq 0$  \\ \hline
  $L_{6,12}$ & $\neq 0$ \\ \hline
	$L_{5,4}\oplus A_1$ & $\neq 0$  \\ \hline
	$L_{6,16}$  & $\neq 0$  \\ \hline
  \end{tabular}
	\caption{Curvature of bi-Lagrangian structures in Table \ref{table:symp} (all examples are Ricci-flat)}
\label{table:curvature}
\end{table}

\clearpage

\subsection*{Acknowledgements} My work on this topic dates back to my diploma thesis \cite{MHdiplom} at the LMU Munich. I am grateful to D.~Kotschick for supervising the thesis and for many helpful discussions and suggestions in the meantime. I also want to thank the anonymous referee for helpful comments.

\bibliographystyle{amsplain}

\bigskip
\bigskip

\end{document}